\documentclass[reqno,a4paper,11pt]{amsart}

\usepackage[english]{babel}
\usepackage{latexsym,amsmath,amsthm,bm}
\usepackage{amsfonts,xcolor}
\usepackage{graphicx}
\usepackage[colorlinks=true, allcolors=blue]{hyperref}
\usepackage{comment}
\usepackage{fullpage,hyperref,cleveref,amssymb,mathtools,ifthen,thmtools,tikz,bbm}
\usepackage{thm-restate}
\usepackage{algorithm}

\usepackage[backend=biber,giveninits=true,doi=false,url=false,isbn=false,style=trad-plain,hyperref,date=year]{biblatex}
\bibliography{references}

\newtheorem{theorem}{Theorem}[section]
\newtheorem{definition}[theorem]{Definition}
\newtheorem{proposition}[theorem]{Proposition}

\newtheorem{lemma}[theorem]{Lemma}

\newtheorem{question}[theorem]{Question}

\newtheorem{remark}[theorem]{Remark}

\newcommand{\cA}{\mathcal{A}}

\newcommand{\cC}{\mathcal{C}}
\newcommand{\cD}{\mathcal{D}}
\newcommand{\cE}{\mathcal{E}}

\newcommand{\cG}{\mathcal{G}}
\newcommand{\cH}{\mathcal{H}}
\newcommand{\cI}{\mathcal{I}}
\newcommand{\cJ}{\mathcal{J}}

\newcommand{\cL}{\mathcal{L}}

\newcommand{\cO}{\mathcal{O}}
\newcommand{\cP}{\mathcal{P}}

\newcommand{\cS}{\mathcal{S}}

\newcommand{\cV}{\mathcal{V}}
\newcommand{\cW}{\mathcal{W}}

\newcommand{\bI}{\mathbf{I}}

\newcommand{\ta}{\tilde{\alpha}}

\newcommand{\lam}{\lambda}

\newcommand{\om}{\omega}

\newcommand{\Gam}{\Gamma}
\newcommand{\Om}{\Omega}

\renewcommand{\Pr}{\mathbb{P}}
\newcommand{\E}{\mathbb{E}}

\newcommand{\tmix}{\tau_{\rm{mix}}}

\providecommand\given{\nonscript\:\ifthenelse{\equal{\delimsize}{}}{\big\vert}{\delimsize\vert}\nonscript\:\mathopen{}}
\let\Pr\undefined
\DeclarePairedDelimiterXPP\Pr[1]{\mathbb{P}}(){}{#1}
\DeclarePairedDelimiterXPP\Ex[1]{\mathbb{E}}{[}{]}{}{#1}
\DeclarePairedDelimiter{\norm}{\lVert}{\rVert}

\newcommand{\ex}{\kappa}
\newcommand{\exx}{C_*}
\newcommand{\codeg}{\Delta_2}

\title{Sampling from the antiferromagnetic Ising model on bipartite, regular expander graphs}
\author{Anna Geisler, Mihyun Kang, Michail Sarantis, Ronen Wdowinski}

\address{Institute of Discrete Mathematics, Graz University of Technology, Steyrergasse 30, 8010 Graz, Austria}

\email{\{geisler,kang,sarantis,wdowinski\}@math.tugraz.at}

\begin{document}

\maketitle
\begin{abstract}
    The antiferromagnetic Ising model samples subsets of vertices of a graph with weight decaying exponentially in the number of edges induced. We study the problem of sampling from this model on the class of bipartite, regular graphs with good vertex expansion. We show that a natural sampler, namely the Glauber dynamics, mixes exponentially slowly in a wide range of parameters. On the other hand, we give an efficient alternative algorithm for sampling from the Ising model and an FPTAS for its partition function, using polymer models and the cluster expansion method.
\end{abstract}

\section{Introduction}

\subsection{Motivation and Background}

The \emph{Ising model} is a 2-spin model that was introduced in 1920 as a mathematical model of ferromagnetism and has since gained a lot interest in both statistical physics and mathematics. See \cite{Cipra87} and \cite[Chapter 3]{FriVe18} for an introduction.
Given an underlying graph structure, assign a spin from $\{-1,1\}$ to each vertex. The weight of a spin configuration is determined by the number of edges where both endpoints have the same spin. In this paper, we study a related model that is equivalent to the Ising model on regular graphs, referring to it as the Ising model throughout. We give a proof of the equivalence of the models in \Cref{sec:ising}. See \cite[Section 2.1]{SuSly12} for a more general result on the equivalence of $2$-spin systems.

Given a graph $G=(V, E)$, a \textit{spin configuration} is a map $\sigma: V \to \{0, 1\}$. Equivalently, it may be viewed as a subset of vertices $S \coloneqq \{v\in V:\sigma(v)=1\}$. Given two parameters, the \textit{fugacity} $\lambda >0$ and the \textit{inverse temperature} $\beta \in \mathbb{R}$, we define the weight of a subset $S \subseteq V$ as
\[
\tilde{\omega}(S) \coloneqq \lambda^{|S|} e^{- \beta |E(S)|},
\]
where $E(S)$ is the set of edges in $G$ induced by $S$. The \textit{Ising model} samples a subset $S \subseteq V$ with probability proportional to its weight $\tilde{\omega}(S)$. That is,
\begin{align} \label{eq:Ising_distribution}
\mathbb{P}(\mathbf{S}=S)=\frac{\tilde{\omega}(S)}{Z_G(\lambda, \beta)},
\end{align}
where $Z_G(\lam,\beta)$ is the \emph{partition function} of the Ising model, given by
\[
Z_G(\lambda, \beta) \coloneqq \sum_{S \subseteq V} \lambda^{|S|} e^{- \beta |E(S)|}.
\]
We denote by $\mu_{\lam,\beta}$ the derived probability measure, the so-called \textit{Gibbs measure}. We say that the model is \emph{ferromagnetic} if $\beta<0$, and \textit{antiferromagnetic} if $\beta>0$. The weight of a set $S$ increases with the number of edges it induces in the ferromagnetic setting, while the weight decreases in the antiferromagnetic setting. This paper focuses on the antiferromagnetic (in short, AF) setting, that is, the regime $\beta \in (0, \infty)$.

One reason we consider this variation of the classical Ising model is its combinatorial interpretation as a positive temperature analogue of the \textit{hard-core model}. Recall that a subset $I \subseteq V$ is called \textit{independent} if it induces no edges. The hard-core model with fugacity $\lam>0$ samples an independent set $I \subseteq V$ of $G$ with probability
$$\mathbb{P}(\bI=I)=\frac{\lam^{|I|}}{Z_G(\lam)},$$
where $Z_G(\lam)$ is the partition function, also called the \textit{independence polynomial}, given by
\begin{align*}
    Z_G(\lam) \coloneqq \sum_{I\in\cI(G)}\lam^{|I|}.
\end{align*}
Hence, informally, the hard-core model is the Ising model at the zero temperature limit $\beta=\infty$. A priori, we cannot immediately pass results from one model to the other by setting $\beta=\infty$, but in practice that will not be difficult by slightly modifying the proofs.

In fact, there is a deeper connection of the AF Ising model to the hard-core model beyond being an informal positive temperature analogue. In \cite{kronenberg2022independent}, Kronenberg and Spinka observed that if we randomly sparsify a graph, the expectation of the partition function of the hard-core model on the sparsified graph is precisely the partition function of the Ising model on the original graph (after appropriate parametrization). More precisely, given $p \in [0,1]$, let $G_p$ be the random graph (or percolated graph) obtained by retaining each edge of $G$ independently with probability $p$, and define $\beta > 0$ so that $p=1-e^{-\beta}$. It was observed in \cite{kronenberg2022independent} that
\begin{equation*}
    \E[Z_{G_p}(\lam)]=Z_G(\lam,\beta).
\end{equation*}
This ability to transfer from such a percolated hard-core model to the AF Ising model was essential for the results of \cite{kronenberg2022independent} about counting independent sets in the percolated hypercube $Q^d$. This work was further expanded upon by this paper's authors in \cite{GeKaSaWdo25}. See \cite{balogh2021independent,chowdhury2024gaussian, CEGK25,jenssen2020independent} for related work on counting independent sets.

The goal of this paper is to study how to efficiently sample from the AF Ising model, and our results are two-fold. First, we exclude a well-known sampler, the \textit{Glauber dynamics}, as an appropriate sampling scheme for the AF Ising model in a broad family of regular graphs with good expansion properties. Secondly, we show that, for the same family of graphs, there is a different algorithm which yields a polynomial time sampling scheme for the Ising model in bounded degree graphs and an FPTAS for its partition function. This generalizes results of \cite{GaTe2006} and \cite{JePePo23} for the hard-core model, and at the same time provides the best up-to-date range of parameters for the validity of these results. Finally, we propose a modification of the Glauber dynamics for graphs that exhibit certain symmetries, and leave as an open question whether this new Markov chain is rapidly mixing.

\subsubsection{Approximation Algorithms}
The search for efficient algorithms to compute the partition function of the Ising model or sample from the corresponding distribution has received great attention. Exact evaluation of the partition function of the \emph{ferromagnetic} Ising model is $\#$P-hard \cite{JeSi93}, while the \emph{antiferromagnetic} case might be even harder \cite{HuPePo24}. The reason that the AF case is computationally hard lies in its close relation to the Max-Cut problem \cite{CoLoMeSo22,DyHeJeMu21}. Thus, it is reasonable to pursue approximation algorithms. In this paper, we focus on approximation algorithms for the AF Ising model on regular, bipartite graphs that exhibit good vertex-expansion.

One of the main approaches to approximately sample from a distribution $\pi$ is the \textit{Markov Chain Monte Carlo method} (MCMC). The underlying idea is to run a Markov chain with stationary distribution $\pi$. Given mild conditions on the chain, the stationary distribution is unique and the distribution of the output after time $t$ tends to $\pi$ as $t\rightarrow\infty$. Whether this is a useful algorithm depends on the \textit{mixing time}, that is, the time it takes to get close enough to $\pi$ in some metric, e.g., in total variation distance.

In this direction, Markov chains with local updates are well-studied and are preferred candidates due to their simple update rules and the computational efficiency of their implementation. One of the most celebrated and well-studied local chains is the \textit{Glauber dynamics} $M(G)$. In the case of the AF Ising model, we start with any subset $S_0 \subseteq V$. In each step, we sample a vertex $v \in V$ uniformly at random and update our set to
\begin{align*}
    S_{t+1} \coloneqq \left\{\begin{array} {c@{\quad \textup{with probability} \quad}l}
       S_t \cup \{v\} & \dfrac{\lambda e^{-\beta |N(v) \cap S_t|}}{1+\lambda e^{-\beta |N(v) \cap S_t|}}, \\[+1ex]
      S_t \setminus \{v\} & \dfrac{1}{1+\lambda e^{-\beta |N(v) \cap S_t|}}.
    \end{array}\right.
\end{align*}
Note that the update probabilities are given by the probability that $v \in S_{t+1}$ conditional on the state of all other vertices. It is easy to show that this Markov chain is ergodic, aperiodic, and time-reversible, and that its (unique) stationary distribution is indeed given by $\pi = \mu_{\lambda, \beta}$, the Gibbs measure defined by \eqref{eq:Ising_distribution}. In the realm of positive results, Chen, Feng, Yin and Zhang \cite{CheFeZhaYi22} used spectral independence techniques to prove that the Glauber dynamics on AF $2$-spin systems mix in time $C(\delta)n\log n$ for all graphs (even with unbounded degree), when the parameters satisfy the uniqueness condition with slack $\delta\in(0,1)$ (see \cite[Definition 3.1]{CheFeZhaYi22}). One can verify that for our model, this implies fast mixing when $\lam(1-e^{-\beta})\lesssim 1/d$; and for the hard-core model when $\lam\lesssim 1/d$.
We note that very recent work of Chen and Jiang \cite{Chen2025Improved} broke this barrier for rapid mixing for the hard-core model, by a small parameter depending on $n$. Specifically, for graphs with $n$ vertices and maximum degree $\Delta$ (possibly depending on $n$), if $\gamma > 0$ is a constant and $\lam\leq\left(1+\frac{\gamma}{\sqrt{n}}\right)\lam_c(\Delta)$, where $\lam_c(\Delta)$ is the uniqueness threshold, then the Glauber dynamics mixes in time $O_\gamma\left(n^{2+2e+\frac{2e}{\Delta-2}}\log\Delta\right).$ 

Closely related is the approximate counting problem, that is, approximating the partition function. The sampling schemes mentioned yield an FPRAS for the partition function in the same regime by a standard self-reducibility argument. 
First we note that there exist \textit{deterministic} algorithms for the partition function of AF $2$-spin systems up to the uniqueness threshold that run in polynomial time, but with exponential dependence on the degree of the graph \cite{li2013correlation}. On the other hand, computing the partition function of AF $2$-spin systems even approximately is NP-hard beyond the uniqueness threshold for $d$-regular graphs \cite{SuSly12}.

However, this does not exclude the possibility of efficient algorithms in other subclasses. An important, and still unresolved, related complexity class is that of \#BIS-hard problems: problems that are polynomially equivalent to approximating the number of independent sets of a bipartite graph. It has been shown in \cite{cai2016hardness} that approximating the partition function of the hard-core model in bipartite graphs beyond the uniqueness threshold is a \#BIS-hard problem. Other problems that have been shown to be \#BIS-hard can be found in \cite{chebolu2012complexity,galanis2016approximately,goldberg2012approximating}.

The fact that \#BIS problems have resisted any attempt for an FPRAS had led Goldberg and Jerrum \cite{goldberg2012approximating} to conjecture that there is no FPRAS for \#BIS problems. Thus far, positive results have only been shown in certain subfamilies of bipartite graphs. For bipartite expanders, there exist FPTAS for some range above the uniqueness threshold, but they have an exponential dependence on the degree \cite{JePePo23}. In the same class, there also exist FPRAS for unbounded degree graphs but they require the fugacity $\lam$ to grow polynomially in the degree \cite{CheGaGoPeSteVi21}. Furthermore, Jenssen, Keevash, and Perkins \cite{JeKePe20} proved that, in general, hard instances are rare (if they exist): if $\Delta\geq\Delta_0$ for some constant $\Delta_0$, and $\lam\geq \frac{50\log^2\Delta}{\Delta}$, then there exists an FPTAS that approximates the independence polynomial of almost all graphs of maximum degree $\Delta$, even when $\Delta$ is unbounded.

Even for the celebrated instance of the hypercube $Q^d$, it is unknown whether a polynomial-time approximate uniform sampler or an FPRAS for the number of independent sets exists. Applying the existing results on $Q^d$, which has $n=2^d$ vertices, an approximate sampler and an FPRAS up to precision $e^{-n}$ for the partition function exist when $\lam\geq d^{\Om(d)}=n^{\Om(\log\log n)}$ \cite[Theorem 11]{CheGaGoPeSteVi21}, while deterministic algorithms for $\lam\geq \frac{C_0 \log^{3/2}d}{d^{1/2}}$ run in time $n^{\text{poly}(d)}=n^{\text{polylog}(n)}$ (\cite{jenssen2024refined,JePePo23} and this work). Consequently, we ask whether this happens due to limitations of current methods or if the problem does not admit an FPRAS. In \cite{GaTe2006}, Galvin and Tetali proved that the Glauber dynamics for the hard-core model mix exponentially slowly by identifying a natural bottleneck. Thus, a potential candidate for an approximate sampler could be a modification that overcomes the bottleneck. We discuss this in more detail in \Cref{sec:discussion}.

\subsection{Main results}

For definitions not yet specified, see \Cref{sec:prelim}. We define a class of bipartite, regular graphs with bounded codegree satisfying a weak global vertex-expansion property. This will be the setting in which we prove our slow mixing result.

\begin{definition}\label{def:H}
    The class $\cH=\cH(n, d, \codeg, \ex)$ is the set of $n$-vertex, $d$-regular, bipartite graphs $G$ with bipartition $(\cO, \cE)$ satisfying the following three properties:
    \begin{itemize}
        \item[(1)] all pairs of vertices of $G$ have at most $\codeg$ common neighbors,
        \item[(2)] $|N(X)| \geq \left(1+\Omega\left(\frac{1}{d^{\ex}}\right)\right)|X|$ for all $X \subseteq \cO, \cE$ whenever $|X| \leq \frac{3}{8}n$, and
        \item[(3)] $n=\Om(d^6\log d)$.
    \end{itemize}
\end{definition}

Next, for our result on more efficient sampling algorithms, we will work with the following subclass of $\cH(n, d, \codeg, \ex)$ that puts a stronger control on the vertex-expansion for sets of polynomial size. 

\begin{definition}\cite[Definition 11.1]{GeKaSaWdo25}\label{def:H'}
The class $\cH'=\cH'(n,d,\codeg,\ex)$ is the set of all graphs from $\cH(n, d, \codeg, \ex)$ that satisfy $(1), (2), (3)$ of \Cref{def:H} and the following additional condition:
    \begin{itemize}
        \item[(4)] $|N(X)| \geq \sqrt{d} |X|$ for all $X \subseteq \cO, \cE$ whenever $|X| \leq d^3 \log n$.
    \end{itemize}  
\end{definition}

Condition (4) is required for the use of cluster expansion methods, allowing us to apply results from our previous work \cite{GeKaSaWdo25} about counting independent sets in a percolated graph from the class $\cH'$.

Given $0 \le \ex < 2$, it is convenient for us to set
\begin{align*}
    \exx \coloneqq \min\left\{\frac{1}{2}, 1-\frac{\ex}{2}\right\}.
\end{align*}
Our main results consider the parameters $\lam, \beta > 0$ under the conditions that
\begin{align} \label{eq:cond_lam_beta}
    \lambda \leq \lam_0 \qquad \text{ and } \qquad  \lam(1 - e^{-\beta}) \geq \frac{C_0 \log^{3/2} d}{d^{\exx}},
\end{align}
where $\lambda_0 > 0$ is a fixed constant and $C_0$ is a sufficiently large constant depending only on $\lambda_0$. In our main theorems, we do not expect to improve these conditions beyond $\lam (1 - e^{-\beta}) = \tilde{\Omega}(1/d)$, as it is believed this is where the typical structure of independent sets drawn according to the hard-core model, i.e., when $\beta=\infty$, exhibits a phase transition (see \cite{galvin2011threshold, jenssen2024refined}). For convenience throughout this paper, we set
$$\alpha \coloneqq \lambda (1 - e^{-\beta}).$$

Our first main result states that for graphs in the class $\cH(n, d, \codeg, \ex)$, under the above conditions on $\lam, \beta$ the Glauber dynamics mix exponentially slowly.
Our bound on the mixing time $\tmix(M(G))$ of the Glauber dynamics $M(G) = M_{\lam,\beta}(G)$ on $G$ will depend on the values of $\lam, \beta > 0$ satisfying the condition \eqref{eq:cond_lam_beta}, as well as the parameters $n, d, \ex$ of the graph $G$. We set
\begin{align} \label{eq:def-flambda}
    f(\lam) \coloneqq \frac{\lam}{R \log^2 (\max\{e, \lam^{-1}\})},
\end{align}
where $R$ is a sufficiently large constant depending only on $\lam_0$. Note that $f(\lam)$ is constant whenever $\lam$ is constant, and that $f(\lam) \geq \frac{2C_0}{R d^{1/2} \log^{1/2} d}$ because $\lam \geq \frac{C_0 \log^{3/2} d}{d^{1/2}}$.
\begin{theorem} \label{thm:slowmixing}
    Fix $\codeg \ge 1$, $0 \le \ex < 2$, and $\lam_0>0$. Then there exist constants $C_0, C > 0$ such that whenever $\lambda \le \lam_0$ and $\alpha \coloneqq \lam(1 - e^{-\beta}) \ge \frac{C_0 \log^{3/2} d}{d^{\exx}}$, the mixing time of the Glauber dynamics $M(G)$ on a graph $G \in \cH(n,d,\codeg,\ex)$ satisfies
    \[
    \tmix(M(G)) \geq \exp\left( \frac{C \alpha^2 f(\lam)}{d^{\ex} \log d} \cdot n\right).
    \]
\end{theorem}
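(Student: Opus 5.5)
We will use the conductance (bottleneck) method, following the approach of Galvin and Tetali for the hard-core model. Recall that if $A \subseteq 2^V$ has $\mu(A) \le 1/2$ and $\partial A$ is the set of states in $A$ from which the chain can leave $A$ in one step, then
\[
\tmix(M(G)) \ge \frac{\mu(A)}{4\,\mu(\partial A)}
\]
up to an absolute constant. Since Glauber dynamics changes one vertex per step, it suffices to find a cut in state space that is crossed only through configurations of exponentially small measure.

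\textbf{The bottleneck.} For $S\subseteq V$ consider the imbalance $|S\cap\cO|-|S\cap\cE|$. By the symmetry $\cO\leftrightarrow\cE$, which preserves the measure since $G$ is regular bipartite with $|\cO|=|\cE|=n/2$, the events
\[
A_\cO = \{|S\cap\cO| > |S\cap\cE|\}, \qquad A_\cE = \{|S\cap\cE| > |S\cap\cO|\}
\]
have equal probability. Hence $\mu(A_\cO)\le 1/2$. A single update changes the imbalance by at most one, so $\partial A_\cO$ consists of configurations with imbalance in $\{0,1\}$. It therefore suffices to show
\[
\mu\bigl(\bigl||S\cap\cO|-|S\cap\cE|\bigr|\le 1\bigr) \le \exp\left(-\frac{C\alpha^2 f(\lam)}{d^{\ex}\log d}\,n\right),
\]
which gives the stated mixing-time lower bound.

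\textbf{Balanced configurations are rare.} We would use the structural results of \cite{GeKaSaWdo25}, transferred via the Kronenberg--Spinka identity $Z_G(\lam,\beta)=\E[Z_{G_p}(\lam)]$, or rather their proofs adapted to the Ising weights. The typical structure is a ``defect'' picture: a sample $S$ has a majority side $\cD\in\{\cO,\cE\}$ and a minority side, and $S$ on the minority side is a small set $X$. The weight of configurations with minority part $X$ is at most roughly $\lam^{|X|}(1+\lam)^{n/2-|N(X)|}(1+\lam e^{-\beta})^{|N(X)|}$ times lower-order factors. Compared with $X=\emptyset$, this loses a factor
\[
\left(\frac{1+\lam}{1+\lam e^{-\beta}}\right)^{|N(X)|} \approx e^{\Theta(\alpha)|N(X)|}
\]
against the entropy $\lam^{|X|}\binom{\cdot}{|X|}$. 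For balanced $S$, the sets $S\cap\cO$ and $S\cap\cE$ are comparable, so one of them, say $X$, has size of order $\lam n$ while still satisfying $|X|\le \tfrac38 n$. This is where $f(\lam)$ enters: the typical density of $S$ on the majority side is about $\lam/(1+\lam)$, and we must lower-truncate $X$ to the part not ``explained'' by few edges. Then property (2) gives $|N(X)|-|X|\ge \Omega(|X|/d^{\ex})$.

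We would bound the total weight of such configurations with a container/approximation argument in the style of Sapozhenko and Galvin--Tetali, counting $2$-linked sets $X$ via approximating pairs $(F,H)$. Property (1), the codegree bound, and property (3), $n=\Omega(d^6\log d)$, control the cost of the approximations. The result is a relative weight of $\exp(-c\,\alpha^2 |N(X)\setminus X|/\log d)$. The $\alpha^2$ arises because we must also pay for edges induced inside $S$ on the minority side, each costing $e^{-\beta}$, and we optimize over the fraction of $N(X)$ that is occupied. Summing over $|X|=\Theta(f(\lam)n)$ yields the displayed bound.

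\textbf{Main obstacle.} The hardest step is the container estimate for \emph{large} sets $X$ of linear size under only the weak expansion $1+\Omega(d^{-\ex})$, with positive temperature. Unlike the hard-core case, $N(X)$ need not be empty of $S$; it is only penalized. So the container argument must handle weights $(1+\lam e^{-\beta})$ on $N(X)$ and lose only a $\log d$ factor. We would first try to reuse the graph-container lemmas of \cite{GeKaSaWdo25} verbatim and redo only the weight comparison, where the condition $\alpha\ge C_0\log^{3/2}d/d^{\exx}$ is exactly what makes the gain beat the approximation entropy.
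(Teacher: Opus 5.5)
Your reduction matches the paper's: bound the conductance of the majority-side event, reduce to showing that (near-)balanced configurations have weight $(1+\lam)^{n/2}e^{-\Omega(\cdot)}$, then use containers. The core step, however, has a genuine gap, because it silently imports a structural fact that holds for the hard-core model but not here.

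You claim that for balanced $S$ one side $X$ has $|X|\le\frac38 n$, and you apply property (2) to $X$. A container estimate does not gain in $|N(X)|-|X|$. Summing $\lam^{|X|}$ over subsets of a fixed closure already costs about $(1+\lam)^{|[X]|}$, so the gain is in $b-a$ with $a=|[X]|$ and $b=|N(X)|$. You therefore need $|[X]|\le\frac38 n$ on some side.

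For independent sets this is automatic: $N(I\cap\cO)\cap[I\cap\cE]=\emptyset$ forces $|[I\cap\cO]|+|[I\cap\cE]|\le n/2$. In the Ising model it fails. A set with $k=qn/2$ well-spread vertices on each side, with $q\gg\log d/d$, is balanced, lies in $\cS_{big}$, and has both closures of size $(1-o(1))n/2$. For such $X$, property (2) gives nothing.

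Your one-sided estimate is also useless for these $X$. It is at least $\lam^{|X|}(1+\lam e^{-\beta})^{n/2}$, and summing over them (e.g. $k\approx\frac{\lam}{1+\lam}\frac n2$) gives order $(1+\lam)^{n/2}(1+\lam e^{-\beta})^{n/2}$ up to polynomial factors. That is exponentially larger than $(1+\lam)^{n/2}$, because replacing $e^{-\beta d_X(v)}$ by $e^{-\beta}$ discards the many edges such configurations induce.

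The paper isolates exactly this family as the new Ising feature: $\cS_{rest}$, the sets in $\cS_{big}$ with $|[S\cap\cO]|,|[S\cap\cE]|>3n/8$. It bounds $\cS_{rest}$ by a separate argument that uses the edges between the two sides. It splits according to whether some side has at least $m=\lam n/\log d$ vertices with between $1$ and $s=\frac d2\cdot\frac{\lam}{1+\lam}$ neighbours in $S$; otherwise the induced edges give weight $\exp(-\Omega(n\log^2 d))$. This yields $\tilde\omega(\cS_{rest})\le(1+\lam)^{n/2}e^{-\Omega(\lam n)}$, and nothing in your proposal plays this role.

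There are also smaller issues.

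First, $S\cap\cD$ need not be $2$-linked, so counting $2$-linked sets does not cover the big balanced sets. The paper re-proves the $\varphi$-approximation step without $2$-linkedness, using a crude $\binom{n}{T_{bound}}$ count. That count is affordable only because $b\ge a\ge f(\lam)n/2\ge n/d$.

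Second, balanced sets with both sides of size at most $f(\lam)n/2$ need not have sides of order $\lam n$. They require the separate trivial bound $\tilde\omega(\cS_{small})\le(1+\lam)^{n/4}$. This small/big threshold is the actual role of $f(\lam)$, not a truncation of the part of $X$ not explained by few edges.

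Third, a regular bipartite graph need not have an automorphism swapping $\cO$ and $\cE$, so $\mu(A_\cO)=\mu(A_\cE)$ is unjustified. This is harmless: take whichever majority event has measure at most $1/2$, and use $\tilde\omega(\cS_\cE)\ge(1+\lam)^{n/2}$ in the denominator, as the paper does.
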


The proof of \Cref{thm:slowmixing} is given in \Cref{sec:slowmixing}. Taking the zero temperature limit $\beta = \infty$ in Theorem \ref{thm:slowmixing}, we obtain a slow mixing result for the hard-core model initially proven in \cite{GaTe2006}, for regular bipartite expanders when $\lam = {\Omega}\left(\frac{\log^{3/2} d}{d^{1/4}}\right)$. A bound similar to ours, for the hard-core model, was also recently proven by Jenssen, Malekshahian, and Park \cite{jenssen2024refined}, and thus our result provides an extension to the AF Ising model.

While Theorem \ref{thm:slowmixing} shows that the Glauber dynamics is an inefficient sampling algorithm, our second main result gives a polynomial-time approximation scheme for sampling according to the Ising model on the subclass $\cH'(n,d,\codeg, \ex)$.
\begin{theorem} \label{thm:efficient_sampling}
    Fix $\codeg \ge 1$, $0 \le \ex < 2$, and $\lam_0>0$. Then there exist constants $C_0, d_0 > 0$ such that whenever $d \ge d_0$, $\lambda \le \lam_0$ and $\alpha \coloneqq \lam(1 - e^{-\beta}) \ge \frac{C_0 \log^{3/2} d}{d^{\exx}}$, there exists a polynomial-time approximation scheme for sampling from the Ising model $\mu_{\lam,\beta}$ and an FPTAS for its partition function on $\cH'(n,d,\codeg,\ex)$.
\end{theorem}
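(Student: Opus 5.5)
Our plan follows the polymer-model strategy of Jenssen, Perkins and Potukuchi \cite{JePePo23}, adapted to the Ising weights through the structural results of \cite{GeKaSaWdo25}.

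\textbf{Step 1: reduce to two polymer models.} Write $Z_G(\lam,\beta)=\sum_S \tilde\om(S)$ and split the configurations according to which side is the "defect" side. Fix the majority side $\cE$ (and symmetrically $\cO$). Given $S$, let $A=S\cap\cO$, the minority part. Summing over $S\cap\cE$ with $A$ fixed, each vertex $v\in\cE$ contributes a factor $1+\lam e^{-\beta|N(v)\cap A|}$. Hence
\[
\sum_{S:\,S\cap\cO=A}\tilde\om(S)=(1+\lam)^{n/2}\,\lam^{|A|}\prod_{v\in N(A)}\frac{1+\lam e^{-\beta|N(v)\cap A|}}{1+\lam}.
\]
Decompose $A$ into $G^2$-connected components; these are the polymers. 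Each polymer $\gamma$ gets weight $w(\gamma)=\lam^{|\gamma|}\prod_{v\in N(\gamma)}\frac{1+\lam e^{-\beta|N(v)\cap\gamma|}}{1+\lam}$. Compatibility means $G^2$-disjointness, which makes the product factorize. Each vertex of $N(\gamma)$ contributes a factor at most $1-\frac{\lam(1-e^{-\beta})}{1+\lam}$, so $w(\gamma)\le \lam^{|\gamma|}e^{-c\alpha|N(\gamma)|}$.

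We then invoke the result of \cite{GeKaSaWdo25} (the percolated analogue of \cite[Thm.~2]{JePePo23}), which holds under \eqref{eq:cond_lam_beta} on $\cH'$. It gives
\[
Z_G(\lam,\beta)=(1+e^{-\Om(n/d^{\ex+\dots})})\bigl[(1+\lam)^{n/2}\Xi_\cO+(1+\lam)^{n/2}\Xi_\cE\bigr].
\]
Here $\Xi_\cO,\Xi_\cE$ are the polymer partition functions restricted to polymers of size at most $\tfrac38 n$, or at most $d^3\log n$ after the large-polymer truncation. The error comes from the container/expansion argument using conditions (1)--(3). If that statement is only phrased via $\E Z_{G_p}$, we translate it using $\E[Z_{G_p}(\lam)]=Z_G(\lam,\beta)$.

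\textbf{Step 2: cluster expansion and the deterministic algorithm.} We verify the Kotecký--Preiss condition $\sum_{\gamma'\not\sim\gamma}w(\gamma')e^{a|\gamma'|+b(\gamma')}\le a|\gamma|$. The number of $G^2$-connected sets of size $k$ containing a vertex is at most $(ed^2)^k$. Condition (4) gives $|N(\gamma)|\ge\sqrt d|\gamma|$ for small polymers, so $w(\gamma)\le(\lam e^{-c\alpha\sqrt d})^{|\gamma|}$. This beats $(ed^2)^k$ as long as $\alpha\sqrt d\gg\log d$, which holds since $\exx\le1/2$ and $C_0$ is large. With the tail $b(\gamma)$ chosen as in \cite{JePePo23}, we can truncate the cluster expansion at clusters of size $m=O(\log(n/\varepsilon))$ with error $\varepsilon$. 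Clusters of size $m$ are enumerated in time $n\cdot\exp(O(m\log d))$, and each weight, including Ursell functions, is computed in time $e^{O(m)}$ via \cite{HeletAl}-type methods. Since $d$ is bounded this is polynomial and yields the FPTAS for $\log\Xi$, hence for $Z_G$.

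\textbf{Step 3: sampling.} We sample in three stages:
\begin{itemize}
\item choose the side $\cO$ or $\cE$ with probability proportional to the approximations of the two terms;
\item sample a polymer configuration from the truncated polymer measure, using the standard self-reducibility of polymer models \cite{JePePo23}: approximate $\Xi$ of polymer models on subgraphs, which satisfy the same KP bounds, and sample vertex by vertex;
\item given $A$, sample $S\cap\cE$ exactly as a product measure, putting each $v$ in independently with probability $\frac{\lam e^{-\beta|N(v)\cap A|}}{1+\lam e^{-\beta|N(v)\cap A|}}$.
\end{itemize}
The output law is within total variation $\varepsilon+e^{-\Om(\cdot)}$ of $\mu_{\lam,\beta}$, because the decomposition in Step 1 is exact up to the configurations discarded by truncation.

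The \textbf{main obstacle} is Step 1's error bound: showing that configurations whose minority side has a large polymer (size between $d^3\log n$ and $\tfrac38n$), or neither side a clear minority, carry negligible mass. This rests on the container and isoperimetry machinery behind \cite{GeKaSaWdo25}, with the weaker expansion (2) forcing the $d^{\ex}$ losses and the threshold $d^{-\exx}$. We will import this from \cite{GeKaSaWdo25} rather than reprove it. The only genuinely new check is that the Ising polymer weight obeys the bound $w(\gamma)\le\lam^{|\gamma|}e^{-c\alpha|N(\gamma)|}$ used there.
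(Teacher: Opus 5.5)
Your overall route is the paper's. Your closed-form $w(\gamma)$ is exactly the paper's $\om(A)=\sum_{B\subseteq N(A)}\om(A,B)$. Drawing $S\cap\overline{\cD}$ from the conditional product measure is equivalent to sampling decorated polymers and then adding each vertex of $\overline{\cD}\setminus N(D)$ with probability $\frac{\lam}{1+\lam}$. Your side choice and self-reducibility sampler are \Cref{alg:main} and \Cref{alg:sample-nu}.

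The gap is your handling of polymers of size above $d^3\log n$. Your Koteck\'y--Preiss check uses only condition (4), which says nothing about them. For such polymers you only have (2), and $\bigl(\lam e^{-c\alpha(1+\Omega(d^{-\ex}))}\bigr)^k$ loses to the $(ed^2)^k$ count. So convergence and tail estimates there need graph containers.

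Nor can you cut the polymer model at size $d^3\log n$ and absorb the rest into an $e^{-\Omega(n/\textrm{poly}(d))}$ error:
\begin{itemize}
\item \Cref{lem:approximateZ} gives that error only for the model containing \emph{all} polymers with $|[A]|\le\frac34|\cD|$.
\item Clusters of size $k>d^3\log n$ are controlled only by $nd^{-(\ex+1)}e^{-k/d^{\ex+1}}$. At $k\approx d^3\log n$ this is $d^{-(\ex+1)}n^{1-d^{2-\ex}}$, merely inverse-polynomial.
\item I expect this is not just a weakness of the bound: (1)--(4) do not appear to prevent sets of size $\Theta(d^{7/2}\log n)$ from expanding by only a factor $1+\Theta(d^{-\ex})$.
\end{itemize}
With such a cutoff your algorithm fails for $\epsilon$ between $e^{-n/\textrm{poly}(d)}$ and $n^{-\textrm{poly}(d)}$. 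In that range brute force is not polynomial in $1/\epsilon$ either. The paper instead keeps the large polymers and imports from \cite{GeKaSaWdo25} the three-regime tail function $\tilde g$; its third regime $k/d^{\ex+1}$ is where containers and (2) enter. In \Cref{lem:Kotecky} it then lets the truncation level $k_0=\lceil d^{\ex+1}\log(16n/(d^{\ex+1}\epsilon))\rceil$ grow past $d^3\log n$ as $\epsilon$ shrinks. Enumerating polymers and clusters of that size still costs only $(n/\epsilon)^{O_d(1)}$.

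Second, your output is within $\epsilon+e^{-\Omega(n/d^{\ex+4})}$ of $\mu_{\lam,\beta}$, which is not at most $\epsilon$ for every $\epsilon$. The same intrinsic error affects your estimate of $Z_G(\lam,\beta)$. You need the paper's fallback: when $\epsilon\le\epsilon_0=\exp(-n/(d^{\ex+4}\log d))$, sample and compute $Z_G(\lam,\beta)$ by brute force. This takes time $2^{n+o(n)}\le(1/\epsilon)^{O(d^{\ex+4}\log d)}$, which is polynomial for fixed $d$.

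A small genuine advantage of your closed form is that each polymer weight costs $O(|\gamma|d)$ time. The paper instead pays $\exp(O(kd))$ for its sum over $B\subseteq N(A)$.
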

The algorithm outputs a sample in runtime $\left(\frac{n}{\epsilon} \right)^{O(d^6 \log d)}$, where $\epsilon > 0$ is the tolerance on the total variation distance of the sampler to the Gibbs measure $\mu_{\lam,\beta}$. The deterministic approximation of $Z_G(\lam,\beta)$ is computed as part of the construction of the sampler, and it has the same running time.
The algorithm and the proof for \Cref{thm:efficient_sampling} are given in \Cref{sec:sampling}. We build on work of Jenssen, Perkins, and Potukuchi \cite{JePePo23}, who gave an FPTAS for the hard-core model on regular bipartite expanders for constant $d$ when $\lambda = \Omega\left(\frac{\log d}{d^{1/4}}\right)$, which was subsequently improved to $\lambda = \Omega\left(\frac{\log^2 d}{d^{1/2}}\right)$ by Jenssen, Malekshahian, and Park \cite{jenssen2024refined}. We note that their results for this range of $\lam$ are only formulated for constant expansion, that is, $\ex=0$.
As with their work, our sampling scheme relies on the development by Jenssen and Perkins \cite{jenssen2020independent} of the cluster expansion method for bipartite expanders, which provides a rigorous and efficient way to describe the \emph{typical} structure of a set drawn from the hard-core or the Ising model. Using the theory of polymer models and cluster expansion, we reduce the task of approximating the partition function to the computation of \emph{clusters} of bounded size and their weights, which may be done efficiently. 

While our work follows similar approaches to analogous work on the hard-core model in the references mentioned above \cite{GaTe2006, jenssen2024refined, JePePo23}, parts of our proofs require additional technical considerations that do not arise in the hard-core model, since we need to consider more than just independent sets. We handle these complications using results and techniques from our previous paper \cite{GeKaSaWdo25}, which follow similar techniques to \cite{kronenberg2022independent,peled2020long}.

\begin{remark}
    The two classes of graphs in \Cref{def:H} and \Cref{def:H'} include a variety of commonly studied graphs, for example,
    \begin{itemize}
        \item[(a)] the hypercube $Q^d$, where $\ex = 1/2$ (see \cite{galvin2019independent});
        \item[(b)] the middle layer of the hypercube $Q^d$ for odd $d$, where $\ex = 1$ (see \cite{balogh2021independent});
        \item[(c)] (growing) even tori $\mathbb{Z}_m^t$, for even $m=O(t^c)$ and any $0<c<3/2$, where $\ex = c+\frac{1}{2}$ (see \cite{CEGK25, GeKaSaWdo25});
        \item[(d)] Cartesian products of bounded-size, regular bipartite graphs, where $\ex = 1/2$ (see \cite{CEGK25}).
    \end{itemize}
    The graphs in (a)--(c) have maximum codegree $\codeg = 2$ when $d \ge 2$. For graphs in (d), the maximum codegree is the maximum of $2$ and the maximum codegree of any base graph, and since the base graphs are of bounded size, this is bounded by a constant. We note that the slow mixing result of \Cref{thm:slowmixing} is valid for these graphs. But as mentioned before, \Cref{thm:efficient_sampling} only gives a quasipolynomial time sampling scheme due to the exponential dependence on the degree $d$. In \Cref{sec:discussion}, we propose a different Markov chain that could potentially mix fast for symmetric graphs like these. 
\end{remark}

\subsection{Structure of the paper}
After basic definitions and notation, \Cref{sec:prelim} covers basic computational concepts as well as a short introduction to polymer models and the cluster expansion. \Cref{sec:slowmixing} is dedicated to the proof of our first main result, \Cref{thm:slowmixing}, after an introductory discussion. The proof relies on results collected in \Cref{lem:small}, \Cref{lem:middle}, \Cref{lem:rest} whose proofs are deferred to subsequent subsections. In particular, \Cref{sec:container} contains the crucial part of the argument given by a container lemma (\Cref{lem:container_main}).

Our second main result, \Cref{thm:efficient_sampling}, is proven in \Cref{sec:sampling}. After setting up the corresponding polymer model, the two main ingredients are the efficient computation of clusters of bounded size and their weights (\Cref{lem:compute_L}), and an efficient sampling algorithm for polymer configurations (\Cref{lem:compute_nu}). The proofs of these lemmas are deferred to \Cref{sec:sample_small_cluster} and \Cref{sec:compute_nu}, respectively. We conclude with a short discussion of the results and possible avenues for future research in \Cref{sec:discussion}.

\section{Preliminaries} \label{sec:prelim}

\subsection{Definitions and basic facts}
Throughout the paper, whenever we consider an $n$-vertex $d$-regular graph satisfying property $\mathcal{C}$, we in fact consider a sequence $(d_{\ell}, n_{\ell})_{\ell \in\mathbb{N}}$ of pairs $(d_{\ell},n_{\ell})\in \mathbb{N}^2$ giving rise to a sequence $(G_{\ell})_{\ell \in\mathbb{N}}$ of $n_{\ell}$-vertex $d_{\ell}$-regular graphs such that there exists $\ell_0 \in\mathbb{N}$ for which all graphs $G_{\ell}$ satisfy property $\mathcal{C}$ whenever $\ell \geq \ell_0$. All asymptotics are with respect to $d \to \infty$ and we use standard Landau notation for asymptotics (e.g., $O(\cdot), o(\cdot), \Omega(\cdot), \Theta(\cdot)$).
For ease of presentation, we will omit floor/ceiling signs and assume that $n$ is even whenever necessary throughout the paper. Unless stated otherwise, all logarithms have the natural base.

Let $G=(V,E)$ be a graph. Given a set $A \subseteq V$, we denote the \emph{(external) neighborhood} of $A$ by $N_G(A) \coloneqq \{v \in V \setminus A : \text{ there exists } w \in A \text{ with } vw \in E\}$, and we write $N_G(v)$ for $N_G(\{v\})$. For a vertex $v \in V$, we denote by $d_G(v)$ the \emph{degree} of $v$, i.e., $d_G(v)\coloneqq|N_G(v)|$. Given two distinct vertices $u,v\in V$, we define their \textit{codegree} to be $\operatorname{codeg}(u,v)\coloneqq |N_G(u)\cap N_G(v)|$.  
The \textit{maximum codegree} of $G$ is defined as $\max\{\operatorname{codeg}(u,v):\ u,v\in V, u\neq v\}$. 
For two disjoint subsets $A, B\subseteq V$, we denote by $E_G(A, B)$ the set of edges with one endpoint in $A$ and one endpoint in $B$, and we denote by $E_G(A)$ the set of edges with both endpoints in $A$.
Whenever the graph $G$ is clear from context, we suppress the subscript $G$ in this notation.

We primarily study bipartite graphs $G$, and we usually denote the bipartition by $(\cO, \cE)$, where $\cO$ and $\cE$ are viewed as the ``odd side" and ``even side" respectively, analogous to the standard bipartition of the hypercube $Q^d$. Let $\cD \in \{\cO, \cE\}$. The (bipartite) \textit{closure} of a subset $A\subseteq \cD$ is defined as
$$[A]\coloneqq\{v\in \cD:N(v)\subseteq N(A)\}.$$ 
A vertex subset $A \subseteq V$ is said to be \textit{$2$-linked} if $A$ induces a connected subgraph of $G^2$, where the graph $G^2$ has the same vertex set as $G$ and $uv$ is an edge in $G^2$ whenever the distance between $u$ and $v$ in $G$ is at most $2$. The following lemma is well known and can be found in, e.g., \cite{GaKa2004}.
\begin{lemma}[\cite{GaKa2004}] \label{l:counting2linked}
  Let $\ell \in \mathbb{N}$ and $G$ be a $d$-regular graph. For any vertex $v$ of $G$, the number of $2$-linked subsets of size $\ell$ that contain $v$ is at most $(ed^2)^{\ell-1}$.
\end{lemma}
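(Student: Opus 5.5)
The plan is to encode each $2$-linked set by a walk in an auxiliary graph and count walks. Fix $v$ and a $2$-linked set $A$ of size $\ell$ containing $v$. Since $A$ induces a connected subgraph of $G^2$, we may fix a spanning tree $T$ of $G^2[A]$ with $\ell$ vertices. The graph $G^2$ has maximum degree at most $d + d(d-1) = d^2$: there are $d$ neighbours at distance $1$, and at most $d(d-1)$ further vertices reached through each of those neighbours. So the statement reduces to the classical fact that, in a graph of maximum degree $D$, the number of $\ell$-vertex subtrees containing a fixed vertex is at most $(eD)^{\ell-1}$. Here $D = d^2$, and the map from $A$ to one of its spanning trees is injective.

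To bound the number of subtrees, I will use the standard encoding through the infinite $D$-ary rooted tree. Order the neighbourhood of each vertex of $G^2$ arbitrarily with labels in $\{1,\dots,D\}$. Given a subtree $T$ rooted at $v$, perform a depth-first traversal of $T$. Each vertex then corresponds to a word recording the labels used along the path from the root. This embeds $T$ injectively as a rooted subtree (containing the root) of the infinite $D$-ary tree with $\ell$ vertices, and the original $T$ can be recovered by following labels from $v$.

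It then remains to count rooted subtrees with $\ell$ vertices of the infinite $D$-ary tree. By the Fuss--Catalan formula this number is $\frac{1}{(D-1)\ell+1}\binom{D\ell}{\ell}$. Using $\binom{D\ell}{\ell}\le (eD\ell/\ell)^{\ell}=(eD)^\ell$, one gets only $(eD)^{\ell}$, which has one factor too many. To remove it, I will instead bound $\frac{1}{(D-1)\ell+1}\binom{D\ell}{\ell}\le \frac{1}{\ell}\binom{D\ell}{\ell-1}$. This holds because
\[
\binom{D\ell}{\ell}=\binom{D\ell}{\ell-1}\frac{D\ell-\ell+1}{\ell},
\]
so the left side equals exactly $\frac{1}{\ell}\binom{D\ell}{\ell-1}$. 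Then
\[
\frac{1}{\ell}\binom{D\ell}{\ell-1}\le \frac{1}{\ell}\left(\frac{eD\ell}{\ell-1}\right)^{\ell-1}= \frac{(eD)^{\ell-1}}{\ell}\left(1+\frac{1}{\ell-1}\right)^{\ell-1}\le \frac{e}{\ell}(eD)^{\ell-1}.
\]
This is at most $(eD)^{\ell-1}$ for $\ell\ge 3$. The cases $\ell=1$ and $\ell=2$ are checked directly: for $\ell=1$ the count is $1$, and for $\ell=2$ it is at most $D\le eD$.

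The main obstacle I expect is getting the exponent exactly $\ell-1$ rather than $\ell$; the computation above handles this. As a cleaner alternative, I could count the $D$-ary tree encodings by the sequence of child-counts, which gives the bound $\binom{D\ell}{\ell-1}/\ell$ directly.
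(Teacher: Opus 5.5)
Your proof is correct and is the standard argument behind this lemma: the paper gives no proof of its own and cites Galvin--Kahn, where the bound comes from the same two ingredients. These are that $G^2$ has maximum degree at most $d^2$, and that the infinite $\Delta$-ary rooted tree has $\frac{1}{(\Delta-1)\ell+1}\binom{\Delta\ell}{\ell}\le (e\Delta)^{\ell-1}$ subtrees on $\ell$ vertices containing the root. Your rewriting of the Fuss--Catalan number as $\frac{1}{\ell}\binom{D\ell}{\ell-1}$, together with the separate check of $\ell\in\{1,2\}$, correctly gets the exponent down to $\ell-1$. One small wording slip: there are $d-1$ further vertices through each of the $d$ neighbours, giving $d(d-1)$ in total.
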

The following estimate on the binomial coefficients will also be used:
\begin{equation}\label{eqn:binom_estimate}
    \binom{n}{k}\leq \binom{n}{\leq k} \leq \exp\left(k\log\frac{en}{k}\right),
\end{equation}
where $\binom{n}{\leq k} \coloneqq \sum_{j=0}^k \binom{n}{j}$.

\subsection{Computational aspects}
Given two probability measures $\mu$ and $\hat{\mu}$ on a sample space $\Omega$, the \emph{total variation distance} between them is defined by
\begin{equation}\label{eq:totalvariationdist}
  \norm{\mu-\hat{\mu}}_{TV} \coloneqq \frac{1}{2} \sum_{x \in \Om} \left|\mu(x) - \hat{\mu}(x)\right| = \max_{S \subseteq \Om} |\mu(S) - \hat{\mu}(S)|.
\end{equation}
Let $G = (V, E)$ be a graph, and let $\mu$ be a probability distribution on $2^V$, the set of all subsets of $V$. A \emph{polynomial-time sampling algorithm} for $\mu$ is a randomized algorithm such that, for every $\epsilon>0$, an output $S \subseteq V$ is computed in time polynomial in $|V|, \frac{1}{\epsilon}$, and the distribution $\hat{\mu}$ of such outputs $S$ satisfies
\begin{align*}
    \lVert\mu - \hat{\mu}\rVert_{TV} \leq \epsilon.
\end{align*}
A \emph{fully-polynomial time approximation scheme} (FPTAS) for a function $Z=Z(G) : G \rightarrow \mathbb{R}$ is an algorithm such that, for every $\epsilon>0$, the output $\hat{Z} = \hat{Z}(G)$ is computed in time polynomial in $|V|, \log\frac{1}{\epsilon}$, and it is an $\epsilon$-approximation of $Z$, i.e.,
\begin{align*}
(1-\epsilon) \hat{Z} \leq Z \leq (1+\epsilon) \hat{Z}.
\end{align*}

\subsection{Markov chains and mixing times}
We recall some definitions from the theory of Markov chains. See \cite{levin2017markov} for more background on this topic.

Let $M$ be a Markov chain on a state space $\Om$ with transition matrix $P$ and stationary distribution $\pi$, meaning that $\pi P=\pi$. For the sake of brevity, we may refer to a Markov chain $M$ by its transition matrix $P$ directly. A chain is called \textit{irreducible} if any state can be reached by any other state in a finite amount of time, i.e., for every $x,y\in \Omega$ there exists an integer $t$ such that $P^t(x,y)>0$. A Markov chain is called \textit{aperiodic} if all its states $x$ satisfy $\gcd(\{t:P^t(x,x)>0\})=1$. If $M$ is irreducible and aperiodic, then it is called \textit{ergodic}. It is a well-known fact that every ergodic chain has a unique stationary distribution. Furthermore, a Markov chain is called \textit{reversible} if it satisfies the \textit{detailed balance equations}:
\begin{equation*}
    \pi(x)P(x,y)=\pi(y)P(y,x) \qquad \forall x,y\in \Om,
\end{equation*}
i.e., the matrix $P$ is self-adjoint with respect to $\pi$.

Given any $\epsilon>0$, the \emph{mixing time} of $M$ is defined as
\[
\tau_{\rm{mix}}(M,\epsilon) \coloneqq \min\left\{t:\norm{P^t(x,\cdot)-\pi}_{\rm{TV}}\leq\epsilon \ \forall x\in\Om\right\}.
\]
It is standard to restrict attention to $\epsilon = 1/4$, and we denote
\[
\tmix(M)\coloneqq \tmix(M,1/4).
\]
When we refer to the mixing time without specifying $\epsilon$, we are referring to $\tmix(M)$. There is a vast literature of techniques and tools to estimate mixing times of Markov chains, and we refer the interested reader to \cite{levin2017markov} and the references therein.

For any states $x,y\in \Om$ and any subsets $A,B\subseteq\Om$, denote
\begin{equation*}
    Q(x,y):=\pi(x)P(x,y) \qquad \text{and} \qquad Q(A,B):=\sum_{x\in A, y\in B}Q(x,y).
\end{equation*}
The \textit{conductance} of a set $S \subseteq \Om$ is defined as
$$\Phi(S) = \Phi_M(S) :=\frac{Q(S,S^c)}{\pi(S)}.$$
The conductance of the chain $M$ is then given by
\begin{equation*}
    \Phi = \Phi_M:=\min_{S \subseteq \Om\, :\, 0<\pi(S)\leq\frac{1}{2}}\Phi(S).
\end{equation*}

The following well-known lemma bounds the mixing time of Markov chain $M$ in terms of the conductance $\Phi$ (see, e.g., \cite[Theorem 7.4]{levin2017markov}). 

\begin{lemma} \label{lem:conduc_mix}
If $M$ is an ergodic Markov chain with conductance $\Phi$, then its mixing time satisfies
    \begin{equation*}
        \tmix(M)\geq\frac{1}{4\Phi}.
    \end{equation*}
\end{lemma}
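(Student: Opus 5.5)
This is the standard conductance bound (Levin--Peres--Wilmer, Theorem 7.4). The plan is to pick a set $S$ with $\pi(S)\le 1/2$ and $\Phi(S)=\Phi$, and use it to show that the chain started from $\pi$ conditioned on $S$ stays far from $\pi$ for time about $1/(4\Phi)$. Since the minimum in the definition of $\Phi$ ranges over finitely many sets (as $\Om$ is finite), such an $S$ exists. If $\Phi=0$ the chain would not be irreducible, so $\Phi>0$ and the bound is meaningful.

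Let $\mu_0=\pi_S$ denote $\pi$ conditioned on $S$, i.e. $\mu_0(x)=\pi(x)\mathbf{1}_{x\in S}/\pi(S)$, and set $\mu_t=\mu_0P^t$.

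\textbf{Step 1 (one-step leakage).} For any $t$, we have $\mu_t(x)\le \pi(x)/\pi(S)$ for all $x$. This holds because $\mu_0\le \pi/\pi(S)$ pointwise, and applying $P$ preserves this inequality since $\pi P=\pi$ and $P$ has nonnegative entries. Hence
\[
\mu_{t+1}(S^c)-\mu_t(S^c)\le \sum_{x\in S,y\in S^c}\mu_t(x)P(x,y)\le \frac{Q(S,S^c)}{\pi(S)}=\Phi(S)=\Phi .
\]
Since $\mu_0(S^c)=0$, induction gives $\mu_t(S^c)\le t\Phi$.

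\textbf{Step 2 (distance from stationarity).} By \eqref{eq:totalvariationdist},
\[
\norm{\mu_t-\pi}_{TV}\ge \pi(S^c)-\mu_t(S^c)\ge \tfrac12-t\Phi ,
\]
using $\pi(S)\le 1/2$. The measure $\mu_t$ is a convex combination of $P^t(x,\cdot)$ over $x\in S$, and total variation distance is convex. Therefore some $x\in S$ satisfies $\norm{P^t(x,\cdot)-\pi}_{TV}\ge \tfrac12-t\Phi$.

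\textbf{Step 3 (conclusion).} If $t<1/(4\Phi)$, then $\tfrac12-t\Phi>\tfrac14$, so $t<\tmix(M)$. Hence $\tmix(M)\ge 1/(4\Phi)$.

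The only delicate point is the pointwise domination $\mu_t\le\pi/\pi(S)$ used in Step 1. It is exactly what allows the per-step escape mass to be bounded by the stationary flow $Q(S,S^c)/\pi(S)$. Reversibility is not needed.
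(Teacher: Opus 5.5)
Your proof is correct and follows the same architecture as the standard proof of Levin--Peres--Wilmer's Theorem 7.4, which the paper cites without reproducing: start from $\pi$ conditioned on a set $S$ with $\pi(S)\le 1/2$, show that at most $\Phi(S)$ of mass leaves $S$ per step, and compare the accumulated $t\Phi(S)$ with $\pi(S^c)\ge 1/2$. The only difference is how the per-step bound is obtained. LPW use the identity $\|\mu_S P-\mu_S\|_{TV}=\Phi(S)$, contraction of total variation under $P$, and the triangle inequality. You instead apply the pointwise domination $\mu_t\le \pi/\pi(S)$ directly to $\mu_t(S^c)$, which is equally valid and slightly more self-contained.
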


In particular, if we can find a set $S \subseteq \Om$ with small conductance, then we get that the Markov chain $M$ has a large mixing time.

\subsection{Polymer models and the cluster expansion}

The following is terminology on two tools from statistical physics that we will use, namely polymer models and the cluster expansion. For more on this topic, see \cite{jenssen2020independent, kotecky1986cluster}.

Let $\cP$ be a finite set of objects, which we call \textit{polymers}, equipped with a \textit{weight function} $\om:\cP\rightarrow [0,\infty)$. We also equip $\cP$ with a symmetric, anti-reflexive relation $\sim$, and we say that two polymers $A,A'\in \cP$ are \textit{compatible} if $A\sim A'$, and \textit{incompatible} otherwise (denoted by $A\nsim A'$). We call a set of pairwise compatible polymers a \textit{polymer configuration}, and we let $\Om(\cP)$ denote the collection of polymer configurations with polymers from $\cP$. (Not to be confused with the notation $\Om$ that we used to denote the state space of a Markov chain $M$.) The triple $(\cP,\om,\sim)$ is called a \textit{polymer model} and we define its \textit{partition function} to be
$$\Xi(\cP)\coloneqq\sum_{\Theta\in\Om(\cP)}\prod_{A\in \Theta} \om(A).$$

For an ordered  tuple $\Gamma$ of polymers, the \emph{incompatibility graph} $H(\Gamma)$ is the graph with vertex set $\Gamma$, in which there is an edge between any two elements of $\Gamma$ that are incompatible.
A \textit{cluster} is an ordered tuple of polymers for which the incompatibility graph is connected. We denote by $\cC(\cP)$ the collection of all clusters with polymers from $\cP$. We extend the weight function to clusters by setting
$$\om(\Gam)\coloneqq\phi(H(\Gam))\prod_{A\in \Gam}\om(A),$$
where $\phi$ is the \textit{Ursell function} of a graph $G=(V, E)$, defined by
$$\phi(G)\coloneqq\frac{1}{|V|!}\sum_{\substack{F\subseteq E\\ \text{spanning, connected}}}(-1)^{|F|}.$$
The \textit{cluster expansion} is the formal power series for $\log\Xi(\cP)$ given by
\begin{equation}\label{eqn:cluster_exp}
    \log\Xi(\cP)=\sum_{\Gam\in\cC(\cP)}\om(\Gam).
\end{equation}
Note that this is indeed an infinite formal series: Although the set of polymers is finite, the set of clusters is infinite, as there exist arbitrarily long sequences of incompatible polymers (e.g., by repeating the same polymer any number of times). In fact, this is the multivariate Taylor expansion of $\log \Xi_\cP$ in the variables $\om(A)$, and equality \eqref{eqn:cluster_exp} was observed by Dobrushin \cite{dobrushin1996estimates}. For the cluster expansion to be a useful approximation of the partition function, we need to know that it converges absolutely, and this is generally verified using the well-known \textit{Koteck\'y--Preiss condition} \cite{kotecky1986cluster} (see also \cite{jenssen2020independent}). 

The following notation will be useful when working with the cluster expansion. The {\em size} of a cluster $\Gamma\in \cC(\cP)$ is defined as $\norm{\Gamma} \coloneqq \sum_{A \in \Gamma} |A|$. For each $k \in \mathbb{N}$, define the sets
$\cC_{k}\coloneqq \{\Gamma\in \cC(\cP): \norm{\Gamma} =k\}$, $\cC_{\leq k}\coloneqq \bigcup_{j\leq k}\cC_{j}$, and  $\cC_{> k} \coloneqq \bigcup_{j> k}\cC_{j}$. We also define  
\[
L_{k} \coloneqq \sum_{\Gamma \in \cC_{k}} \omega(\Gamma), \qquad L_{\leq k} \coloneqq \sum_{\Gamma \in \cC_{\leq k}} \omega(\Gamma) \qquad \text{and} \qquad L_{> k} \coloneqq \sum_{\Gamma \in \cC_{> k}} \omega(\Gamma).
\]
Then the cluster expansion of a polymer model takes the form $\log \Xi = L_{\le k} + L_{>k}$.

\section{Slow mixing of the Glauber dynamics} \label{sec:slowmixing}

In this section, we prove \Cref{thm:slowmixing}, which shows that the Glauber dynamics for the antiferromagnetic Ising model mixes exponentially slowly for a collection of graphs $G$ in the class $\cH(n, d, \codeg, \ex)$ with constant $\codeg \geq 1, 0 \leq \ex <2$, and with $\lambda, \beta > 0$ satisfying $\lam \le \lam_0$ and $\alpha \coloneqq \lam (1 - e^{-\beta}) \ge \frac{C_0 \log^{3/2}d}{d^{\exx}}$. Assume these conditions from now on.

\subsection{Balanced sets as bottlenecks}
For a collection of configurations $\mathcal{S} \subseteq 2^V$, we define its weight as
\begin{align*}
    \tilde{\omega}(\mathcal{S}) \coloneqq \sum_{S \in \cS} \tilde{\om}(S) = \sum_{S \in \cS} \lambda^{|S|} e^{- \beta |E(S)|}.
\end{align*}
Define
\[\cS_{bal} \coloneqq \{S \subseteq V : |S \cap \cE|=|S \cap \cO|\}
\]
denote the set of balanced vertex sets.
Further, let 
\[
\cS_{\cE} \coloneqq \{S \subseteq V : |S \cap \cE|>|S \cap \cO|\} \qquad \text{ and } \qquad \cS_{\cO} \coloneqq \{S \subseteq V : |S \cap \cE|<|S \cap \cO|\}
\]
denote the majority-even and majority-odd sets, respectively. Without loss of generality, we may assume that $\mu(\cS_{\cE}) \leq 1/2$, where $\mu=\mu_{\lambda, \beta}$ is the Gibbs measure for the Ising model. Our goal is to show that the set $\cS_{\cE}$ has small conductance, through a sequence of lemmas proven later, and to deduce Theorem \ref{thm:slowmixing} from these.

First, we give a bound on the conductance in terms of the weight of balanced vertex sets. For our Markov chain $M$, let $P$ denote the transition matrix of the Glauber dynamics, and recall that $Q(x,y) \coloneqq \pi(x)P(x,y)$. Note that if $S \in \cS_{\cE}$ and $S' \notin \cS_{\cE}$ with $P(S, S') \neq 0$, then $S' \in \cS_{bal}$. Observe that
\begin{align*}
    Q(\cS_{\cE}, \cS_{\cE}^c) = \sum_{\substack{S \in \cS_{\cE}\\ S' \notin \cS_{\cE}}} \mu(S) P(S, S') = \sum_{\substack{S \in \cS_{\cE}\\ S' \notin \cS_{\cE}}} \mu(S') P(S', S)= \sum_{\substack{S \in \cS_{\cE}\\ S' \in \cS_{bal}}} \mu(S') P(S', S) \leq \mu(\cS_{bal}),
\end{align*}
where the second equality uses the reversibility of the chain and the last inequality uses that for any fixed $S' \in \cS_{bal}$ we have $\sum_{S \in \cS_{\cE}} P(S', S) \leq 1$. Thus, the conductance satisfies
\begin{align*}
    \Phi_M \leq \Phi_M(\cS_{\cE}) = \frac{Q(\cS_{\cE}, \cS_{\cE}^c)}{\mu(\cS_{\cE})} \leq \frac{\mu(\cS_{bal})}{\mu(\cS_{\cE})} = \frac{\tilde{\omega}(\cS_{bal})}{\tilde{\omega}(\cS_{\cE})}.
\end{align*}
Note that $\tilde{\omega}(\cS_{\cE}) \geq \sum_{S \subseteq \cE} \lambda^{|S|} \geq (1+\lambda)^{n/2}$, using that $|E(S)|=0$ for any $S \subseteq \cE$. This implies that
\begin{equation} \label{eq:conductance_bound_weight_balanced}
    \Phi_M \leq \frac{\tilde{\omega}(\cS_{bal})}{(1+\lambda)^{n/2}}.
\end{equation}

The remaining goal is to get an upper bound for $\tilde{\omega}(\cS_{bal})$. To do this, we cover the family $\cS_{bal}$ of balanced sets by families of sets which are either small or large on both sides. The transition point from small to large will depend on $f(\lambda)$ as defined in \eqref{eq:def-flambda}. We set
\[
\cS_{small}  \coloneqq \{S \subseteq V : |S \cap \cO|, |S \cap \cE| \leq f(\lam)\frac{n}{2}\}, \]
\[
\cS_{big} \coloneqq \{S \subseteq V : |S \cap \cO|, |S \cap \cE| > f(\lam)\frac{n}{2}\}.
\]
Note that $\cS_{bal} \subseteq \cS_{small} \cup \cS_{big}$.
We further partition the set $\cS_{big}$ as follows: for $\cD \in \{\cO, \cE\}$,
\[
\cS_{big, \cD} \coloneqq \{S \in \cS_{big} : |[S \cap \cD]| \leq 3n/8\},
\]
\[
\cS_{rest} \coloneqq \cS_{big} \setminus (\cS_{big, \cE} \cup \cS_{big, \cO}).
\]
We remark that the set $\cS_{rest}$ is a new feature of the Ising model to analyze, not arising in the hard-core model on independent sets in graphs like in the work of Galvin and Tetali \cite{GaTe2006}. This is because the Ising model allows sampling arbitrary subsets $S \subseteq V$, whereas if we only consider independent sets $S \subseteq V$ then they must satisfy either $|[S \cap \cE]| \le n/4$ or $|[S \cap \cO]| \le n/4$, so that $\cS_{rest}$ is empty in the hard-core model.

The remaining task is to bound the weight of the sets $\cS_{small}, \cS_{big, \cD}, \cS_{rest}$ separately, given by the following three lemmas.

\begin{lemma} \label{lem:small}
We have
\begin{align*}
    \tilde{\omega}(\cS_{small}) \leq (1+\lambda)^{n/4}.
\end{align*}
\end{lemma}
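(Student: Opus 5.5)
We bound the weight of $\cS_{small}$ crudely, discarding the edge penalty entirely. Since $\beta>0$, every $S$ satisfies $\tilde{\omega}(S)\le \lam^{|S|}$. A set $S\in\cS_{small}$ is determined by the pair $(S\cap\cO,S\cap\cE)$, and each part is a subset of a side of size $n/2$ with at most $f(\lam)\frac n2$ elements. The sum therefore factorises:
\[
\tilde{\omega}(\cS_{small})\le \Bigl(\sum_{k\le f(\lam)n/2}\binom{n/2}{k}\lam^k\Bigr)^2 .
\]
The whole task is to show that this truncated binomial sum is at most $(1+\lam)^{n/8}$, which is an eighth of the full exponent $\frac n2\log(1+\lam)$.

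To bound the truncated sum, write $m=n/2$ and $\theta=f(\lam)$. Each term is at most $\binom{m}{k}\lam^k$ with $k\le\theta m$. The terms increase in $k$ as long as $k$ is below roughly $m\lam/(1+\lam)$, and $\theta\le \lam/R$ is much smaller than that. Hence the sum is at most $(\theta m+1)\binom{m}{\theta m}\lam^{\theta m}$. Applying \eqref{eqn:binom_estimate} gives
\[
\sum_{k\le \theta m}\binom{m}{k}\lam^k\le (\theta m+1)\exp\Bigl(\theta m\log\tfrac{e\lam}{\theta}\Bigr).
\]
Substituting $\theta=\lam/(R\log^2(\max\{e,\lam^{-1}\}))$ gives $\log\frac{e\lam}{\theta}=1+\log R+2\log\log(\max\{e,\lam^{-1}\})$. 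The exponent is then
\[
m\cdot\frac{\lam\bigl(1+\log R+2\log\log\max\{e,\lam^{-1}\}\bigr)}{R\log^2\max\{e,\lam^{-1}\}}.
\]

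The main step is comparing this exponent with $\frac{m}{8}\log(1+\lam)$, uniformly over $\lam\le\lam_0$. Since $\lam\le\lam_0$, we have $\log(1+\lam)\ge c(\lam_0)\lam$ with $c(\lam_0)=\log(1+\lam_0)/\lam_0$. It therefore suffices that
\[
\frac{1+\log R+2\log\log L}{R\log^2 L}\le \frac{c(\lam_0)}{16},\qquad L\coloneqq\max\{e,\lam^{-1}\}\ge e.
\]
The factor $\log\log L/\log^2 L$ is bounded over $L\ge e$, and $(1+\log R)/R\to0$, so choosing $R$ large in terms of $\lam_0$ gives the inequality with room to spare. This is exactly why $f$ carries the $\log^2$ factor and why $R$ depends on $\lam_0$. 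The extra slack, factor $16$ rather than $8$, is reserved to absorb the polynomial prefactor.

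It remains to absorb $(\theta m+1)\le n$. The remaining half of the budget is $\frac{m}{16}\log(1+\lam)\gtrsim n\lam$. Using $\lam\ge C_0\log^{3/2}d/d^{1/2}$ and $n=\Om(d^6\log d)$ from \Cref{def:H}(3), this gives $n\lam\ge n^{1-o(1)}\gg\log n$, so $\log n$ is negligible. Each factor is then at most $(1+\lam)^{n/8}$, and squaring gives $\tilde{\omega}(\cS_{small})\le(1+\lam)^{n/4}$.
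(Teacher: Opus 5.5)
Your proof is correct, but it does the bookkeeping differently from the paper.

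\textbf{The paper's route.} It bounds $\lam^{|S|}e^{-\beta|E(S)|}\le(1+\lam)^{f(\lam)n}$ uniformly over $\cS_{small}$. It counts the sets via $\binom{n/2}{\le f(\lam)n/2}^2\le\exp\bigl(nf(\lam)\log(e/f(\lam))\bigr)$, which sums over all sizes at once with no prefactor. It then only checks $f(\lam)\le\frac18$ and $f(\lam)\log(e/f(\lam))\le\frac18\log(1+\lam)$, both by taking $R$ large, without using $n$, $d$ or the lower bound on $\lam$. In that route the entropy $\log(e/f(\lam))\approx\log(1/\lam)$ for small $\lam$ is what the logarithmic factor in $f$ compensates for.

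\textbf{Your route.} You keep $\lam^k$ inside the binomial sum, so the entropy becomes $\log(e\lam/\theta)=1+\log R+2\log\log L$. The $\lam$'s cancel, and your estimate would work even with $f(\lam)=\lam/R$. So your remark that this is \emph{exactly why $f$ carries the $\log^2$ factor} is misattributed: in your argument the log factors only help.

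\textbf{The prefactor.} The price you pay is the factor $\theta m+1$ from bounding the sum by its largest term. You absorb it using $\lam\ge C_0\log^{3/2}d/d^{1/2}$ and $n=\Omega(d^6\log d)$. This is valid, although it gives $n\lam=\Omega(n^{11/12})$ rather than $n^{1-o(1)}$, since $d$ may be as large as about $n^{1/6}$; either way this is $\gg\log n$. You could avoid these extra hypotheses with the Chernoff-type bound $\sum_{k\le K}\binom{m}{k}\lam^k\le x^{-K}(1+\lam x)^m$, taking $K=\theta m$ and $x=\theta/\lam$. This gives $\exp\bigl(\theta m\log(e\lam/\theta)\bigr)$ directly, with no prefactor.

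\textbf{Arithmetic.} Your factor-of-two bookkeeping is off, but harmlessly. The bound $(1+\lam)^{n/8}$ is a quarter, not an eighth, of the full exponent $\frac n2\log(1+\lam)$. Also, your budget of $\frac m8\log(1+\lam)$ per side actually yields $(1+\lam)^{n/16}$ per factor. This only makes your final bound stronger than needed.
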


\begin{lemma} \label{lem:middle}
There exists a constant $C'>0$ such that
\begin{align*}
    \tilde{\omega}(\cS_{big, \cD}) \leq (1+\lambda)^{n/2} \exp\left( - \frac{C' \alpha^2 f(\lam)}{d^{\ex} \log d} \cdot n \right).
\end{align*}
\end{lemma}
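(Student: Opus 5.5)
We reduce to the bipartite-closure structure familiar from Galvin--Tetali. By symmetry take $\cD=\cE$. Write each $S\in\cS_{big,\cE}$ as $S=A\cup B$ with $A=S\cap\cE$ and $B=S\cap\cO$. Then $|A|>f(\lam)n/2$ and $|[A]|\le 3n/8$, and since $G$ is bipartite, $|E(S)|=|E(A,B)|$.

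\textbf{Step 1: sum out $B$ exactly.} Fix $A$ and sum $\lam^{|B|}e^{-\beta|E(A,B)|}$ over all $B\subseteq\cO$. The weight factorizes over the vertices $v\in\cO$.
\begin{itemize}
\item[(a)] A vertex $v\notin N(A)$ contributes $1+\lam$.
\item[(b)] A vertex $v\in N(A)$ contributes $1+\lam e^{-\beta|N(v)\cap A|}\le 1+\lam e^{-\beta}=(1+\lam)\bigl(1-\tfrac{\alpha}{1+\lam}\bigr)$.
\end{itemize}
Hence
\[
\tilde\omega(\cS_{big,\cE})\le (1+\lam)^{n/2}\sum_{A}\lam^{|A|}(1+\lam)^{-|N(A)|}\Bigl(1-\tfrac{\alpha}{1+\lam}\Bigr)^{|N(A)|},
\]
where the sum runs over $A\subseteq\cE$ with $|A|>f(\lam)n/2$ and $|[A]|\le 3n/8$. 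The last factor is a genuine gain over the hard-core computation. Since $|N(A)|\ge|A|>f(\lam)n/2$, it is at most $\exp(-\alpha f(\lam)n/(2(1+\lam_0)))$, which is already stronger than the target. It therefore suffices to show that the remaining sum $\sum_A\lam^{|A|}(1+\lam)^{-|N(A)|}$ is at most $\exp(o(\alpha f(\lam)n))$. Alternatively, we can spend part of the $\alpha$-gain to pay for the entropy of the containers in Step 2.

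\textbf{Step 2: group by closure and apply a container lemma.} The second factor is the hard-core-type sum. Group the sets $A$ by their closure $H=[A]$. Since $N([A])=N(A)$, the inner sum over $A\subseteq H$ is at most $\sum_{A\subseteq H}\lam^{|A|}=(1+\lam)^{|H|}$. This gives a total of
\[
\sum_{H}(1+\lam)^{|H|-|N(H)|}\Bigl(1-\tfrac{\alpha}{1+\lam}\Bigr)^{|N(H)|},
\]
where the sum is over closed $H$ with $f(\lam)n/2<|H|\le 3n/8$. By property (2) of \Cref{def:H}, we have $|N(H)|-|H|\ge c|H|/d^{\ex}$. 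The difficulty is that there are exponentially many such $H$, so we cannot simply sum. Here I would invoke the container lemma \Cref{lem:container_main} (or prove it, following \cite{GaTe2006,jenssen2024refined}). For each size class $(a,g)=(|H|,|N(H)|)$ it should supply a family of approximations $(F',S')$ with $F'\subseteq N(H)$ and $S'\supseteq H$, such that $|S'|\le|F'|+O(\text{small})$. The family has size $\exp\bigl(O(g\log^2 d/d^{1/2}+(g-a)\log d/\ldots)\bigr)$, the standard Sapozhenko-type bound.

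Given such containers, for each approximation we sum over the $H$ it captures. We bound the ratio via a weighted-sum estimate in the style of Galvin's lemma: it is at most $(1+\lam)^{-(g-a)}$ times a factor $\exp(-\Omega(\lam(g-a)/\log(1/\lam)))$, or, in the Ising setting, we use the $\alpha$ factor from Step 1 to absorb slack.

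\textbf{Step 3: balance the terms.} We have entropy $\approx g\log^{3/2}d/d^{\exx}$ from containers, against gains of order $\alpha g$ or $\lam(g-a)\gtrsim \lam g/d^{\ex}$. The hypothesis $\alpha\ge C_0\log^{3/2}d/d^{\exx}$ is exactly what makes the $\alpha$-gain dominate the container entropy. Summing over the at most $n^2$ pairs $(a,g)$ then costs only a polynomial factor. I expect the surviving exponent, after the losses from the $\alpha^2$ and $\log d$ factors in the container bounds, to be $-C'\alpha^2f(\lam)n/(d^{\ex}\log d)$, using $g\ge a>f(\lam)n/2$.

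\textbf{Main obstacle.} The hardest step is the container lemma in Step 2. We need approximations fine enough that the counting entropy is beaten by a gain proportional to $\alpha$, with only weak $d^{-\ex}$ expansion available. Tracking how the $\alpha$-factor from Step 1 interacts with the approximation, as opposed to the pure hard-core setting, is where I would concentrate the work. I would start from the $\psi$-approximation/$\phi$-approximation scheme of Galvin--Tetali with threshold parameter $\psi\approx d/2$.
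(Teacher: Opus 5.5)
The gap is in Step 1: an algebra slip creates a gain that does not exist. By your own (a) and (b), for fixed $A$ you have $\sum_{B\subseteq\cO}\lam^{|B|}e^{-\beta|E(A,B)|}=(1+\lam)^{n/2-|N(A)|}\prod_{v\in N(A)}\bigl(1+\lam e^{-\beta d_A(v)}\bigr)\le(1+\lam)^{n/2}\bigl(1-\tfrac{\alpha}{1+\lam}\bigr)^{|N(A)|}$. Your display carries an extra factor $(1+\lam)^{-|N(A)|}$, and as written it is false. The sets $B$ with $B\cap N(A)=\varnothing$ alone contribute $(1+\lam)^{n/2-|N(A)|}$, which exceeds your bound whenever $N(A)\neq\varnothing$. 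At $\beta=\infty$ your display would give $(1+\lam)^{n/2-2|N(A)|}$ instead of the hard-core value $(1+\lam)^{n/2-|N(A)|}$. Each $v\in N(A)$ contributes $1+\lam e^{-\beta d_A(v)}\ge 1$, versus $1$ in the hard-core model, so the Ising weight of every $A$ is \emph{at least} its hard-core weight. There is therefore no free factor $\exp(-\alpha f(\lam)n/(2(1+\lam_0)))$; if there were, the $d^{-\ex}$ and $\alpha^2/\log d$ in the statement would be superfluous. As in the hard-core case, all decay must come from the expansion deficit $b-a=|N(A)|-|[A]|=\Omega(a/d^{\ex})$. Your reduction to showing that the hard-core sum is $\exp(o(\alpha f(\lam)n))$ collapses.

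Steps 2--3 inherit the error, and the coarsening there cannot be repaired. With the correct factor, bounding $\sum_{A\subseteq H}\lam^{|A|}\le(1+\lam)^{|H|}$ leaves per-$H$ terms $(1+\lam)^{|H|}\bigl(1-\tfrac{\alpha}{1+\lam}\bigr)^{|N(H)|}$. A single closed $H$ with $|H|\approx|N(H)|\approx n/4$ (e.g.\ the even vertices of a Hamming ball of radius about $d/2$ in $Q^d$) already gives $\exp(\Omega(n))$ when $\lam=\lam_0$ and $\alpha$ is near the lower end of the allowed range. Two steps discard what the container argument needs:
\begin{itemize}
\item the uniform bound $1+\lam e^{-\beta d_A(v)}\le 1+\lam e^{-\beta}$, whereas for typical $A$ most $v\in N(A)$ have large $d_A(v)$, so their factor is close to $1$;
\item the relaxation from $[A]=H$ to $A\subseteq H$.
\end{itemize}

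The paper's route is as follows. It splits $S\cap\overline{\cD}$ into $B\subseteq N(A)$ and $B'\subseteq\overline{\cD}\setminus N(A)$ and sums out only $B'$. It keeps the exact weight $\sum_{B\subseteq N(A)}\lam^{|A|+|B|}e^{-\beta|E(A,B)|}(1+\lam)^{-|N(A)|}$ and groups by $(a,b)=(|[A]|,|N(A)|)$, giving at most $(1+\lam)^{n/2}(n/2)^2\max_{b\ge a\ge f(\lam)n/2}\omega(\cG(a,b))$. It then applies \Cref{lem:container_main} directly, using $a\ge|A|>f(\lam)n/2$ and $b-a\ge\Omega(a/d^{\ex})$. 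The latter is property (2) applied to $[A]$, which is legitimate since $|[A]|\le 3n/8$ and $N([A])=N(A)$.

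Your Step 3 bookkeeping (gains ``of order $\alpha g$'') rests on the spurious factor. In the actual argument, the container count $\exp\bigl(O(b\log^2 d/d^2+t\log^2 d/d)\bigr)$ from the appendix must be beaten by a gain proportional to $t=b-a$ alone. That comparison is where the threshold $d^{-\exx}$ and the final exponent come from.
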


\begin{lemma} \label{lem:rest}
We have
\begin{align*}
    \tilde{\omega}(\cS_{rest}) \leq (1+\lambda)^{n/2} \exp(-\Omega(\lam n)).
\end{align*}
\end{lemma}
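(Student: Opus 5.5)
The plan is to write each $S\in\cS_{rest}$ as $S=A\cup B$ with $A=S\cap\cO$ and $B=S\cap\cE$, and to use that $e^{-\beta|E(A,B)|}$ is the probability that no edge between $A$ and $B$ survives in the percolated graph $G_p$ with $p=1-e^{-\beta}$. This is the Kronenberg--Spinka identity, restricted to a subfamily of configurations, so that
\[
\tilde\omega(\cS_{rest})=\E\Big[\sum_{S\in\cS_{rest},\ S\in\cI(G_p)}\lam^{|S|}\Big].
\]
The structural input is the following. Since $|[A]|>3n/8$ and $N(A)=N([A])$, applying property~(2) of \Cref{def:H} to a subset of $[A]$ of size $3n/8$ gives $|N(A)|\ge(1+\Omega(d^{-\ex}))\tfrac{3n}{8}>\tfrac{3n}{8}$, and symmetrically $|N(B)|>\tfrac{3n}{8}$. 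Hence at least roughly $n/4$ vertices of $\cE$ have a $G$-neighbour in $A$, and at least roughly $n/4$ vertices of $\cO$ have a $G$-neighbour in $B$.

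The first step is to sum out one side. For fixed $A$, the sum over $B\subseteq\cE$ with no surviving edge to $A$ has expectation
\[
\prod_{v\in\cE}\bigl(1+\lam e^{-\beta|N(v)\cap A|}\bigr)\le(1+\lam)^{n/2}\exp\Big(-\frac{\lam(1-e^{-\beta k_v})}{1+\lam}\Big)^{\#}
\]
in the sense that each $v\in\cE$ with $k_v=|N(v)\cap A|\ge 1$ contributes a factor $\exp(-c\,\lam(1-e^{-\beta k_v}))$. This alone is not enough, because the remaining sum $\sum_A\lam^{|A|}$ costs up to $(1+\lam)^{n/2}$ again. So the constraint on $B$ has to be used at the same time. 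To do this I will split according to which side is ``lighter''. Since $|A|,|B|>f(\lam)n/2$, the entropy of the smaller side is at most $\binom{n/2}{\le |A|}\lam^{|A|}$. The exponent must then be charged against the $\Omega(n)$ vertices of $N(A)$ that are forbidden, or at least penalised by a factor $\approx(1+\lam e^{-\beta})/(1+\lam)$, when choosing $B$. A vertex in $N(A)$ with many $A$-neighbours loses a constant fraction of its factor $(1+\lam)$, namely $\Omega(\lam)$ in the exponent, once $\beta k_v\gtrsim 1$. Vertices with few $A$-neighbours are handled by a double-counting argument: $\sum_v k_v=d|A|$ together with $|N(A)|\ge 3n/8$.

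The main obstacle is the regime where $A$ is spread out, so that most vertices of $N(A)$ see only $O(1)$ vertices of $A$, and $\beta$ is small. The per-vertex penalty is then only about $\lam(1-e^{-\beta})=\alpha$ rather than $\lam$, which does not obviously give $\exp(-\Omega(\lam n))$. Here I would instead use the closure constraint on $B$ symmetrically: if both $[A]$ and $[B]$ exceed $3n/8$, then $N(A)\cap B$ and $N(B)\cap A$ are each of linear size, so $|E(A,B)|=\Omega(n)$. More precisely, the typical number of induced edges is $\Omega(\lam^2 nd)$ weighted against the entropy, and $\beta\,\lam^2 nd\gg\lam n$ because $\alpha\ge C_0\log^{3/2}d/d^{\exx}$ and $d$ is large. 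I would make this rigorous with a container-type bound in the style of \cite{GeKaSaWdo25}. One approximates $A$ by a container $F\supseteq[A]$ with $|N(F)|$ close to $|N(A)|$, at a cost of $\exp(o(\lam n))$ in the number of containers. One then bounds the weight for each container by $(1+\lam)^{n/2}e^{-\Omega(\lam n)}$ via the product formula above, and finishes with a union bound.
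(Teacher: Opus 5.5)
\textbf{There is a genuine gap.} Your structural input ($|N(A)|,|N(B)|>3n/8$) and your diagnosis of the obstacle (spread-out $A$ with small $\beta$ gives a per-vertex penalty of only about $\alpha$) are both correct. The failure is in the step meant to make the argument rigorous.

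For fixed $A=S\cap\cO$, the product $\lam^{|A|}\prod_{v\in\cE}(1+\lam e^{-\beta|N(v)\cap A|})$ is the total weight of \emph{all} $S$ with $S\cap\cO=A$. Bounding a container's contribution by it therefore discards the conditions $|[B]|>3n/8$ and $|B|>f(\lam)\frac{n}{2}$ on $B=S\cap\cE$. Without those conditions there is nothing to gain.

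To see this, keep only the term $B=\varnothing$. The sum of your per-container bounds over all containers is then at least $\sum\lam^{|A|}$ over all $A\subseteq\cO$ with $|[A]|>3n/8$ and $|A|>f(\lam)\frac{n}{2}$. Every such $A$ occurs as $S\cap\cO$ for some $S\in\cS_{rest}$, for instance with $B=\cE$. This sum equals $(1+\lam)^{n/2}$ times the probability that a $\frac{\lam}{1+\lam}$-random $\mathbf{A}\subseteq\cO$ has both properties. Since $\mathbb{P}(u\notin[\mathbf{A}])\le d(1+\lam)^{-d}=o(1)$ for each $u\in\cO$, Markov's inequality shows this probability is $1-o(1)$.

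So the odd-majority phase alone contributes $(1-o(1))(1+\lam)^{n/2}$. With only $e^{o(\lam n)}$ containers, some container must then carry a bound of at least $(1+\lam)^{n/2}e^{-o(\lam n)}$. Swapping the roles of $A$ and $B$ fails in the same way, with the even phase.

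The constraints on the two sides must be used jointly, and in your proposal this happens only heuristically. Deterministically, the two closure conditions give $|[B]\cap N(A)|>n/4$. Every $v$ there has an $A$-neighbour lying in $N(B)$, so $|E(A,B)|\ge|A\cap N(B)|>n/(4d)$. That yields only a factor $e^{-\beta n/(4d)}$, which is useless unless $\beta\gtrsim\lam d$. Your remark about ``typically $\Omega(\lam^2nd)$ induced edges'' is not an upper bound on a total weight, since that total is governed by the edge-minimizing configurations.

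The paper does not use containers for this lemma. It covers $\cS_{rest}\subseteq\cJ_\cO\cup\cJ_\cE\cup\cJ'$, where $\cJ_\cD$ consists of the $S$ having at least $m=\lam n/\log d$ vertices $v\in\cD$ with $1\le|N(v)\cap S|\le s$, with $s=\frac{d}{2}\cdot\frac{\lam}{1+\lam}$. Each family is then bounded along the lines of Lemma 7.1 of \cite{GeKaSaWdo25}.

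The families $\cJ_\cD$ isolate exactly your ``spread-out, small $\beta$'' regime. They are controlled by the rarity of many vertices with few but positive $S$-neighbours, not by an edge count.

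On $\cJ'$, all but $m$ of the vertices on either side that touch $S$ see more than $s$ vertices of $S$. Since $\beta s\ge\frac{\alpha d}{2(1+\lam)}\gg 1$, putting such a vertex into $S$ is prohibitively expensive. This is the coupling mechanism your argument lacks: $S\cap\cO$ must essentially avoid $N(S\cap\cE)$ and vice versa. Because $|N(S\cap\cD)|>3n/8$ on both sides, fewer than $n/8$ vertices per side remain available without penalty, which is where the saving comes from.
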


\Cref{lem:small} is proven by direct calculation in \Cref{sec:small}. \Cref{lem:middle} requires a more involved calculation given in \Cref{sec:container}. In fact, it requires slight modifications of Galvin's container construction \cite{galvin2011threshold, galvin2019independent}, which is common in this area, to bounding weights of sets that are not necessarily 2-linked. Some of the details are deferred to \Cref{sec:appendix_containers}. Although it seems folklore that such a result is possible, it was not done explicitly before to our knowledge. The proof of Lemma \ref{lem:rest} is only a slight modification of a result from our previous work \cite[Lemma 7.1]{GeKaSaWdo25}, and we outline the basic changes to the proof in \Cref{sec:non-polymer}.

Assuming the above three lemmas, we now prove \Cref{thm:slowmixing}.
\begin{proof}[Proof of \Cref{thm:slowmixing}]
Since $\cS_{bal} \subseteq \cS_{small} \cup \cS_{big, \cE} \cup \cS_{big, \cO} \cup \cS_{rest}$, we have 
    \[
    \tilde{\omega}(\cS_{bal}) \leq \tilde{\omega}(\cS_{small}) + \tilde{\omega}(\cS_{big, \cE}) + \tilde{\omega}(\cS_{big, \cO}) + \tilde{\omega}(\cS_{rest}).
    \]
    Plugging in \Cref{lem:small}, \Cref{lem:middle} and \Cref{lem:rest} yields that
    \begin{align} \label{eq:bound-sum}
        \tilde{\omega}(\cS_{bal}) \leq &(1+\lambda)^{n/2}\left[(1+\lam)^{-n/2}+2\exp\left( - \frac{C' \alpha^2 f(\lam)}{d^{\ex}\log d} \cdot n\right)+\exp(-\Omega(\lam n))\right]
        \end{align}
    We observe that the second summand is the dominating term.
    This follows from
    \[
    \frac{C' \alpha^2 f(\lam)}{d^{\ex}\log d} \cdot n \leq \frac{C' \lam_0^2}{R} \cdot \frac{1 }{d^{\ex}\log d} \cdot \lam n = o(\lam n) \hspace{10pt} \text{and} \hspace{10pt} \frac{C' \alpha^2 f(\lam)}{d^{\ex}\log d} \cdot n = o\left(\frac{n}{4} \log(1+\lam)\right),
    \]
    where the first upper bound uses $\alpha \leq \lam \leq \lam_0$, and the second bound uses $\log(1+\lambda) \geq \lam - \frac{\lam^2}{2}$. Thus, \eqref{eq:bound-sum} shows that there exists a constant $C > 0$ such that 
    \begin{align} \label{eq:weight_bal}
        \tilde{\omega}(\cS_{bal}) \leq (1+\lambda)^{n/2} \exp\left( - \frac{C \alpha^2 f(\lam) }{2 d^{\ex}\log d} \cdot n\right).
    \end{align}
    Plugging \eqref{eq:weight_bal} into \eqref{eq:conductance_bound_weight_balanced}, we conclude that the conductance of our Markov chain $M$ satisfies $\Phi_M \leq  \exp\left( - \frac{C \alpha^2 f(\lam)}{2 d^{\ex} \log d} \cdot n \right)$. Therefore, by \Cref{lem:conduc_mix}, the mixing time satisfies
    \[
    \tmix(M) \geq \exp\left( \frac{C \alpha^2 f(\lam)}{d^{\ex}\log d} \cdot n \right). \qedhere
    \]
\end{proof}

\subsection{The contribution of small sets -- proof of \Cref{lem:small}} \label{sec:small}

In this subsection, we prove our upper bound on the total weight of $\cS_{small}$, which consists of subsets $S \subseteq V$ with $|S \cap \cO|, |S \cap \cE| \leq f(\lam)\frac{n}{2}$. The bound will follow from elementary considerations. Recall that $f(\lam) = \frac{\lam}{R \log^2(1/\lam)}$ if $0 < \lam < \frac{1}{e}$, and $f(\lam) = \frac{\lam}{R}$ if $\frac{1}{e} \le \lam \le \lam_0$.
\begin{proof}[Proof of \Cref{lem:small}]

We calculate that
\begin{align*}
    \tilde{\omega}(\cS_{small}) &= \sum_{S \in \cS_{small}} \lambda^{|S|} e^{- \beta |E(S)|} \leq \binom{\frac{n}{2}}{\leq f(\lam) \frac{n}{2}}^2 (1+\lambda)^{f(\lam)n} \\
    &\leq \exp\left( n \cdot f(\lam) \log\left(\frac{e}{f(\lam)}\right) \right) \cdot  (1+\lambda)^{f(\lam)n},
\end{align*}
where the first inequality uses $e^{-\beta |E(S)|} \le 1$ and the second one uses \eqref{eqn:binom_estimate}. Since $\lam \le \lam_0$ is bounded, we may choose $R$ large enough so that $f(\lam) \le \frac{1}{8}$. Now it suffices to show that $f(\lam) \log \left( \frac{e}{f(\lam)}\right) \le \frac{1}{8} \log(1+\lam)$. 

If $\lam \ge 1/e$, then we may choose $R$ large enough so that $f(\lam) \log \left( \frac{e}{f(\lam)}\right) \le \frac{1}{8}\log\left(1+\frac{1}{e}\right) \le \frac{1}{8} \log(1+\lam)$. If $\lam < 1/e$, then we have
\begin{align*}
    f(\lam) \log \left( \frac{e}{f(\lam)}\right) = \frac{\lam}{R \log^2(1/\lam)} \log \left( \frac{e R \log^2(1/\lam)}{\lam} \right) \le \frac{2 \lam}{R \log(1/\lam)} \le \frac{2\lam}{R} \le \frac{1}{8} \log(1+\lam),
\end{align*}
where $R$ is chosen sufficiently large so that all the inequalities hold. This finishes the proof.
\end{proof}

\subsection{The contribution of large sets -- container lemma and proof of \Cref{lem:middle}} \label{sec:container}

Fix $\cD \in \{\cE, \cO\}$, and let $\overline{\cD}$ denote the complementary bipartition side. In this subsection, we upper bound the total weight of $\cS_{big, \cD}$, which consists of subsets $S \subseteq V$ satisfying $|S \cap \cO|, |S \cap \cE| > f(\lam) \frac{n}{2}$ and $|[S \cap \cD]| \le 3n/8$.

This will use a container lemma similar to those in \cite{galvin2011threshold, GaTe2006, GeKaSaWdo25, jenssen2020independent}, giving upper bounds on the number of vertex sets with a given closure and neighborhood. Towards this, define, for $a, b \in \mathbb{N}$,
\begin{align*}
\cG(a, b) = \cG_{\cD}(a, b) &\coloneqq \{A \subseteq \cD : |[A]|=a, |N(A)|=b\},\\
\cS(a, b) = \cS_{\cD}(a, b) &\coloneqq \{S \in \cS_{big, \cD} : S \cap \cD \in \cG(a, b)\}.
\end{align*}
Also define the total weight of $\cG(a, b)$ to be
\[
\omega(\cG(a,b)) \coloneqq \sum_{A \in \cG(a, b)} \sum_{B \subseteq N(A)} \frac{\lambda^{|A|+|B|}}{(1+\lambda)^{|N(A)|}} e^{- \beta |E(A, B)|}.
\] 
The following container lemma gives an upper bound on $\om(\cG(a, b))$. It is almost the same as \cite[Lemma 4.2]{GeKaSaWdo25}, but the difference here is that sets in $\cG(a, b)$ need not be $2$-linked. We give a proof of this new container lemma in \Cref{sec:appendix_containers}.

\begin{lemma} \label{lem:container_main}
Fix $\codeg \ge 1$, $0 \le \ex < 2$, and $\lam_0 > 0$, and take $C_0 > 0$ to be sufficiently large. Suppose that $G \in \mathcal{H}(n,d,\codeg,\ex)$, that $\lambda \le \lam_0$ and $\alpha \coloneqq \lam(1 - e^{-\beta}) \ge \frac{C_0 \log^{3/2} d}{d^{\exx}}$, and that $a,b \in \mathbb{N}$ satisfy $b \ge \left(1 + \Omega\left(\frac{1}{d^{\ex}}\right)\right)a$. Then for any $a \geq f(\lam) \frac{n}{2}$, we have
    \begin{align*}
        \omega(\cG(a, b)) \leq \frac{n}{2}\exp \left( -\frac{C(b - a)\alpha^2}{\log d} \right).
    \end{align*}
\end{lemma}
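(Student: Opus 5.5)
We will follow Galvin's two-stage container scheme (as in \cite{galvin2011threshold, galvin2019independent} and \cite[Lemma 4.2]{GeKaSaWdo25}), adapted to the Ising weight and to sets that are not $2$-linked. First we rewrite the inner sum. For fixed $A\subseteq\cD$ with $N(A)$ of size $b$, each $v\in N(A)$ contributes a factor $\lambda e^{-\beta|N(v)\cap A|}$ if $v\in B$ and $1$ otherwise. Hence
\[
\sum_{B\subseteq N(A)}\frac{\lambda^{|B|}e^{-\beta|E(A,B)|}}{(1+\lambda)^{|N(A)|}}=\prod_{v\in N(A)}\frac{1+\lambda e^{-\beta d_A(v)}}{1+\lambda}\le \prod_{v\in N(A)}\Bigl(1-\frac{\alpha}{1+\lambda}\Bigr),
\]
using $d_A(v)\ge1$. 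So $\omega(\cG(a,b))\le\sum_{A\in\cG(a,b)}\lambda^{|A|}(1-\alpha/(1+\lam_0))^{b}$. This is a hard-core-type sum in which each neighbour pays a factor $e^{-\Omega(\alpha)}$ instead of $(1+\lambda)^{-1}$. The task is then to beat the entropy of choosing $A$, which has to be paid against the gain of order $\alpha(b-a)$.

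\textbf{Step 1 (coarse approximation).} For each $A\in\cG(a,b)$ we construct a pair $(F',S')$ with $F'\subseteq N(A)$ and $S'\supseteq[A]$ satisfying $|S'|\le|F'|+O((b-a)/\log d)$ or a similar approximation. The construction proceeds as follows. First take a random subset $T_0\subseteq A$ of density about $\log d/d$, and let $T_1$ be the set of vertices of $N(A)$ of high degree into $A$, so that $N(T_0)$ covers most of $N(A)$. Then add a small set to cover the neighbours missed. The expansion hypothesis $b\ge(1+\Omega(d^{-\ex}))a$ and the codegree bound $\codeg$ control the sizes. Without $2$-linkedness we cannot count the choices of $T_0$ via \Cref{l:counting2linked}. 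Instead we count $T_0$ directly as a subset of the ground set, at a cost of $\binom{n/2}{\le |T_0|}$. Since $|T_0|=O(b\log d/d)$ and $a\ge f(\lam)n/2$, this costs $\exp(O(b\log^2 d/d))$ up to factors of $\log(1/f(\lam))$. This is exactly why we need $a$ linear in $n$, and where $f(\lam)$ enters. The factor $n/2$ in the bound absorbs the choice of starting data. The number of containers is then $\exp(O((b-a)\log^2 d/d^{1-\ex}\cdot\text{polylog}))$. Using $\alpha\ge C_0\log^{3/2}d/d^{\exx}$, this is at most $\exp(\frac{c(b-a)\alpha^2}{\log d})$ for a small constant $c$.

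\textbf{Step 2 (reconstruction cost per container).} For a fixed approximation $(F',S')$ we bound $\sum_{A}\lambda^{|A|}(1-\alpha')^{|N(A)|}$ over $A\subseteq S'$ with $N(A)\supseteq F'$. We do this either by a direct product bound or by Galvin's entropy/probabilistic "$\psi$-approximation" argument, splitting into the two cases $|S'|-|F'|$ small versus large, as in \cite[Lemma 4.2]{GeKaSaWdo25}. The target bound is $\exp(-c'(b-a)\alpha^2/\log d)$ with $c'$ exceeding the entropy constant from Step 1. The factor $\alpha^2$, rather than $\alpha$, comes from balancing the two cases, taking the threshold at $\psi\approx d/\alpha$ or so.

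\textbf{Expected obstacle.} The main difficulty is Step 1 for non-$2$-linked $A$: the global counting of $T_0$ must not cost more than the saving. The plan for handling it is to rely on $a\ge f(\lam)n/2$, so that $\log\binom{n/2}{|T_0|}\approx |T_0|\log(e/f(\lam))$ is proportional to $b$. We then compare $b$ with $b-a\ge\Omega(b/d^{\ex})$, and check that the constant $C_0$ is large enough to absorb the resulting $\log d$ and $\log(1/\lam)$ factors.
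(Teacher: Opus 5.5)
Your proposal has a genuine gap, and it occurs in the very first reduction. Bounding each factor $\frac{1+\lambda e^{-\beta d_A(v)}}{1+\lambda}$ by $\frac{1+\lambda e^{-\beta}}{1+\lambda}=\tilde{\alpha}^{-1}$, where $\tilde{\alpha}=\frac{1+\lambda}{1+\lambda e^{-\beta}}$, throws away exactly the cancellation the lemma rests on.

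The $\lambda$-weighted number of sets $A$ inside $[A]$ is of order $(1+\lambda)^{a}$, so each of the $b$ neighbours must pay nearly $(1+\lambda)^{-1}$. The true factor does this once $\beta d_A(v)\gg 1$; note that $\lambda\beta d\ge\alpha d\ge C_0\sqrt{d}\log^{3/2}d$. Your bound $\tilde{\alpha}^{-1}=1-\frac{\alpha}{1+\lambda}$ does not do this when $\alpha\to 0$ with $\lambda$ fixed.

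Here is a concrete counterexample to your Step 2 target.
\begin{itemize}
\item Take $G=Q^d$ (so $\ex=1/2$), $\lambda=1$, $\beta\asymp\log^{3/2}d/\sqrt{d}$, and let $H$ be the even vertices of weight at most $d/2$.
\item Set $a=|H|=|[H]|$ and $b=|N(H)|$. Then $a\ge f(\lam)n/2$ and $b-a=\Theta(n/\sqrt{d})$, so the hypotheses hold.
\item Every vertex of $N(H)$ has at least $d/2$ neighbours in $H$. By Harris' inequality, a uniformly random $A\subseteq H$ has $N(A)=N(H)$, hence $A\in\cG(a,b)$, with probability at least $e^{-2b\cdot 2^{-d/2}}=e^{-o(a)}$.
\item Since $\log\tilde{\alpha}\le\beta/2$, this gives $\sum_{A\in\cG(a,b)}\lambda^{|A|}\tilde{\alpha}^{-b}\ge 2^{a}e^{-o(a)}e^{-\beta b/2}=e^{\Omega(a)}$.
\end{itemize}
So the bound you aim for in Step 2 is false away from the hard-core limit $\tilde{\alpha}=1+\lambda$, and no choice of containers can repair it. The per-container reconstruction estimate must be proved for the genuine weight $\omega(A)=\lambda^{|A|}\prod_{v\in N(A)}\frac{1+\lambda e^{-\beta d_A(v)}}{1+\lambda}$, keeping the sum over $B\subseteq N(A)$. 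The paper does not reprove this; it imports it as \cite[Lemma 4.5]{GeKaSaWdo25}. The only new ingredient in the paper is the container family.

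You correctly spotted that novelty: count the seed set globally by a binomial, which is affordable because $a$ is linear in $n$. However, your Step 1 loses a factor of $d$ in the entropy.
\begin{itemize}
\item Sampling $T_0\subseteq A$ at density $\log d/d$ gives $|T_0|\asymp a\log d/d$, so $\binom{n/2}{|T_0|}=\exp(\Theta(b\log^2 d/d))$.
\item Only $t=b-a\ge\Omega(b/d^{\ex})$ is available, so beating this by $t\alpha^2/\log d$ needs $\alpha^2\gg d^{\ex-1}\log^3 d$.
\item That is impossible for $\ex\ge 1$ (since $\alpha\le\lam_0$), and it fails at the lower end of $\alpha\ge C_0\log^{3/2}d/d^{\exx}$ for every $\ex>0$. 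The inequality you assert at the end of Step 1 therefore does not hold.
\end{itemize}

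The paper instead proceeds as follows.
\begin{itemize}
\item It takes a $p$-random $T_0\subseteq N(A)$ with $p=2C\log d/d^2$. Using the codegree bound $\codeg$, $N(N_{[A]}(T_0))$ covers all but an expected $b/d^2$ vertices of $N(A)^{d/2}$.
\item A minimum cover $T_1$ with $|T_1|\le 6t\log d/d$, together with the edge set $E(T_0,\cD\setminus[A])$, completes the $\varphi$-approximation. Galvin's refinement then gives $\psi=d/2$.
\item Counting $T_0\cup T_0'\cup T_1$ globally, with $b\ge n/d$ making the logarithmic factor $O(\log d)$, yields $|\cA|\le\exp(O(b\log^2 d/d^2+t\log^2 d/d))$.
\item These two terms are beaten by $t\alpha^2/\log d$ exactly when $\alpha\gg\log^{3/2}d/d^{1-\ex/2}$ and $\alpha\gg\log^{3/2}d/d^{1/2}$. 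This is the origin of $\exx$.
\end{itemize}
Separately, your proposed threshold $\psi\approx d/\alpha$ is not meaningful, since $\psi\le d-1$.
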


We now use \Cref{lem:container_main} to prove \Cref{lem:middle}.

\begin{proof}[Proof of \Cref{lem:middle}]
Recalling that for every $S \in \cS_{big, \cD}$ we have $a \geq f(\lambda) \frac{n}{2}$, we may split up the weight $\tilde{\omega}(\cS_{big, \cD})$ as follows:
\begin{align*}
    \tilde{\omega}(\cS_{big, \cD}) \leq  \sum_{b \geq a \geq f(\lambda) \frac{n}{2}} \sum_{S \in \cS(a, b)} \lambda^{|S|} e^{- \beta |E(S)|},
\end{align*}
where the first sum is over both $a, b \in \mathbb{N}$.
Since every $S \in \cS(a, b)$ satisfies $S \cap \cD \in \cG(a, b)$, we may rewrite $A=S \cap \cD$ and $B_0= S \cap \overline{\cD}$ and obtain that
\begin{align*}
    \tilde{\omega}(\cS_{big, \cD}) &\leq \sum_{b \geq a \geq f(\lambda) \frac{n}{2}} \sum_{A \in \cG(a, b)} \lambda^{|A|} \sum_{B_0 \subseteq \overline{\cD}} \lambda^{|B_0|} e^{- \beta |E(A, B_0)|} \\
    &\leq \sum_{b \geq a \geq f(\lambda) \frac{n}{2}} \sum_{A \in \cG(a, b)} \sum_{B \subseteq N(A)} \lambda^{|A|+|B|} e^{- \beta |E(A, B)|} \sum_{B' \subseteq \overline{\cD} \setminus N(A)} \lambda^{|B'|}.
\end{align*}
The second inequality splits every $B_0 = S \cap \overline{\cD}$ into $B= B_0 \cap N(A)$ and $B' = B_0 \cap (\overline{D} \setminus N(A))$.
Using that $\sum_{B' \subseteq \overline{\cD} \setminus N(A)} \lambda^{|B'|} \leq (1+\lam)^{n/2}$, we then get that
\begin{align*}
    \tilde{\omega}(\cS_{big, \cD}) \leq & (1+\lambda)^{n/2} \sum_{b\geq a \geq f(\lambda) \frac{n}{2}} \sum_{A \in \cG(a, b)} \sum_{B \subseteq N(A)} \frac{\lambda^{|A|+|B|}}{(1+\lambda)^{|N(A)|}} e^{- \beta |E(A, B)|}\nonumber \\ 
    \leq & (1+\lambda)^{n/2} \left( \frac{n}{2} \right)^2 \max_{b\geq a \geq f(\lam) \frac{n}{2}} \omega(\cG(a, b)),
\end{align*}
where the second inequality uses that there are at most $n/2$ choices for both $a$ and $b$.

We may now apply \Cref{lem:container_main} to finish the proof. Indeed, our assumption on $G$ implies that $b - a \geq a \cdot \Omega\left(\frac{1}{d^{\ex}}\right) $ (otherwise $\cG(a,b)$ is empty), so that \Cref{lem:container_main} applies. Moreover, polynomial factors of $n$ are negligible compared to the resulting exponential upper bound given by \Cref{lem:container_main}.
\end{proof}

\subsection{The contribution of remaining sets -- proof of \Cref{lem:rest}} \label{sec:non-polymer}

We aim to bound the contribution of $\cS_{rest} \coloneqq \cS_{big} \setminus (\cS_{big, \cE} \cup \cS_{big, \cO})$. The proof follows that of \cite[Proof of Lemma 7.1]{GeKaSaWdo25} with a modification in the choice of the parameters. For the current proof, set $m \coloneqq \frac{\lambda n}{\log d}$, so as to make the dependence on $\lambda$ explicit. Also set $s \coloneqq \frac{d}{2} \cdot \frac{\lam}{1+\lam}$ and let
\[
   \cJ_{\cD} \coloneqq\{S \subseteq V : \exists \text{ at least } m \text{ vertices } v \in \cD \text{ such that } 1 \leq |N(v) \cap S| \leq s\},
\]
and
\[
    \cJ' \coloneqq \cS_{rest} \setminus (\cJ_{\cE} \cup \cJ_{\cO}). 
\]
Then $\cS_{rest} \subseteq \cJ_{\cO} \cup \cJ_{\cE} \cup \cJ'$.
Following the calculations as in \cite{GeKaSaWdo25}, one may show that
\[
\tilde{\omega}(\cJ_{\cD}) \leq (1+\lambda)^{n/2} \qquad  \text{ and } \qquad \tilde{\omega}(\cJ')\leq \exp(-\Omega(n \log^2 d)).
\]
\Cref{lem:rest} now easily follows (noting that $\lambda \leq \lambda_0=o(\log^2 d)$).

\section{Efficient sampling via the cluster expansion} \label{sec:sampling}

In this section, we prove \Cref{thm:efficient_sampling}, giving an efficient sampling algorithm for the Ising model with Gibbs measure $\mu_{\lam,\beta}$ for graphs $G$ in the class $\cH'(n,d,\codeg,\ex)$, as well as an FPTAS for computing their partition functions $Z_G(\lam,\beta)$. We use the method of cluster expansion, and we assume that $d$ is sufficiently large but independent of $n$ throughout this section.

In their work on counting independent sets in the hypercube $Q^d$, Jenssen and Perkins \cite{jenssen2020independent} used an appropriate polymer model and the convergence of the cluster expansion to produce an approximation to the partition function in the hard-core model. Importantly, the method inherently uses an auxiliary measure that approximates the desired sampling measure. In principle, this can produce a sampling algorithm if the steps of the proof of the approximation can be translated from computation to efficient sampling. This is the idea of \cite{JePePo23} applied to the hard-core model. In the case of our Ising model, the cluster expansion has been proven in \cite{GeKaSaWdo25} to be absolutely convergent for graphs in the class $\cH'(n,d,\codeg,\ex)$ and ranges of $\lam,\beta$ that we consider through the paper, via a verification of the Koteck\'y--Preiss condition \cite{kotecky1986cluster}. Thus, it is expected that the sampler can be reproduced accordingly. In this section, we verify the efficiency of this sampler for the Ising model, in a condensed form.

\subsection{The polymer model}
 
We use the polymer models described in \cite{GeKaSaWdo25, kronenberg2022independent}. For $\cD \in \{\cO, \cE\}$, the weight of a $2$-linked set $A \subseteq \cD$ is defined as
$$\om(A)\coloneqq\sum_{B\subseteq N(A)}\frac{\lam^{|A|+|B|}}{(1+\lam)^{|N(A)|}}e^{-\beta|E(A,B)|} = \sum_{B \subseteq N(A)} \om(A, B),$$
where
$$\om(A,B)\coloneqq\frac{\lam^{|A|+|B|}}{(1+\lam)^{|N(A)|}}e^{-\beta|E(A,B)|}.$$
We consider the polymer models
\begin{align*}
    \cP_{\cD}= \left\{ A \subseteq \cD: \quad A \text{ is $2$-linked},\quad |[A]|\leq \frac{3}{4}\cdot |\cD|\right\},
\end{align*}
where two polymers $A,A' \in\cP_\cD$ are \textit{compatible} if $A\cup A'$ is $2$-linked, and the weight function is given by $\om(A)$. Further, we define the \textit{decorated} polymer models
\begin{align*}
\hat{\cP}_\cD \coloneqq \left\{(A,B): \quad A\subseteq \cD\ \text{is $2$-linked},\quad |[A]|\leq \frac{3}{4}\cdot |\cD|, \quad \text{and} \quad B \subseteq N(A)\right\},
\end{align*}
where the elements $(A,B)\in\hat\cP_\cD$ are the \textit{decorated} polymers. Two decorated polymers $(A,B)$, $(A',B')$ are compatible if $A\cup A'$ is $2$-linked, and the weight function is given by $\om(A,B)$. The use of the decorated polymer model is to decompose the weight $\om(A)$ of a polymer $A \in \cP_{\cD}$ to its summands $\om(A,B)$. 

A set of pairwise compatible decorated polymers with respect to $\cD$ is called a \textit{decorated polymer configuration}, and the collection of such polymers is denoted by $\hat{\Om}_{\cD}$. The weight of a decorated polymer configuration $\hat{\Theta}$ from $\hat{\Om}_{\cD}$ is defined as
\begin{align*}
    \omega(\hat \Theta) \coloneqq \prod_{(A, B) \in \hat{\Theta}} \omega(A, B).
\end{align*}
Similarly, the collection of polymer configurations from $\cP_{\cD}$ is denoted by $\Om_{\cD}$.
The partition functions of these polymers models coincide and are given by
$$\Xi_\cD \coloneqq \Xi_{\hat{\Om}_\cD} = \Xi_{\Om_{\cD}} = \sum_{\hat \Theta \in \hat \Om_\cD}\omega(\hat{\Theta}).$$
We denote by $\cC_{\cD}$ the collection of clusters with polymers from $\cP_{\cD}$, and we denote by $L_{\cD, k}$, $L_{\cD, \leq k}$, $L_{\cD, >k}$ the corresponding terms and truncations of its cluster expansion.

We define a new measure $\hat{\mu}^*$ on pairs $(S, \cD)$ using the following steps:
\begin{enumerate}
    \item Choose a \textit{defect side} $\cD\in\{\cO,\cE\}$ with probability proportional to $\Xi_{\cD}$.
    \item Sample a decorated polymer configuration $\hat{\Theta}$ from $\hat{\Om}_{\cD}$ according to the measure $\nu_\cD$ defined by
    $$\mathbb{P}_{\nu_{\cD}}(\boldsymbol{\hat{\Theta}}=\hat{\Theta})=\frac{\om(\hat{\Theta})}{\Xi_\cD}.$$
    \item If $\hat{\Theta}=\{(A_1,B_1),\dots, (A_s,B_s)\}$, then let $D \coloneqq \bigcup_i(A_i\cup B_i)$, and put 
    \begin{itemize}
        \item every vertex of $D$, and
        \item every vertex $v\in\overline{\cD} \setminus N(D)$ independently with probability $\frac{\lam}{1+\lam}$
    \end{itemize}
    into vertex set $S$.
\end{enumerate}
Then $\hat{\mu}^*$ is a measure on the set of pairs $(S, \cD)$ where $S \subseteq V$ is a vertex subset and $\cD \in \{\cO, \cE\}$ is a defect side. Given $(S, \cD)$, we may recover the decorated polymer configuration $\hat{\Theta}(S) \in \hat{\Om}_{\cD}$ by taking the maximal $2$-linked components of $\cD$ to form the $A_i$ and the vertices in $S \cap N(A_i)$ to form the $B_i$. Using this, we also define
\begin{align*}
    \hat{\om}^*(S, \cD) \coloneqq \mathbbm{1}_{\hat{\Theta}(S) \in \Om_\cD} \lam^{|S|} e^{- \beta |E(S)|},
\end{align*}
and
\begin{align*}
\hat{\om}(S) \coloneqq \hat{\om}^*(S, \cO)+\hat{\om}^*(S, \cE).
\end{align*}
Now we define a measure $\hat{\mu}$ by sampling subsets $S \subseteq V$ proportional to their weights $\hat{\om}(S)$:
\begin{align*}
\mathbb{P}_{\hat{\mu}}(\mathbf{S}=S)=\frac{\hat{\om}(S)}{\hat{Z}},
\end{align*}
where $\hat{Z} \coloneqq \sum_{S \subseteq V} \hat{\om}(S)$.
Following \cite{GeKaSaWdo25}, the partition function $\hat{Z}$ of $\hat{\mu}$ satisfies 
$$\hat{Z}=(1+\lam)^{n/2}(\Xi_{\cO}+\Xi_{\cE}).$$
The following lemma states that $\hat{\mu}$ is indeed a good approximation for the Gibbs measure $\mu= \mu_{\lambda, \beta}$ of the Ising model.

\begin{lemma}[Lemma 3.3 in \cite{GeKaSaWdo25}] \label{lem:approximateZ}
Fix $\codeg \ge 1$, $0 \le \ex < 2$, and $\lam_0>0$. Then there exists a constant $C_0 > 0$ such that whenever $\lam \le \lam_0$ and $\lam(1 - e^{-\beta}) \ge \frac{C_0 \log^{3/2} d}{d^{\exx}}$ and $G \in \mathcal{H}(n,d,\codeg,\ex)$, we have
    \[
    \left| \log Z_G(\lam, \beta) - \log\left((1+ \lam)^{n/2} (\Xi_\cO + \Xi_\cE)\right) \right| = O\left( \exp\left(-\frac{n}{d^{\ex+4}}\right)\right),
    \]
and this implies that
\[
\norm{\hat{\mu} - \mu}_{TV} = O\left( \exp\left(-\frac{n}{d^{\ex+4}}\right)\right).
\]
\end{lemma}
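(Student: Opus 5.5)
The plan is to compare $Z_G(\lam,\beta)$ with $\hat Z = (1+\lam)^{n/2}(\Xi_\cO+\Xi_\cE)$ by splitting the sum defining $Z_G$ according to the structure of $S$.

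\emph{Step 1: $\hat\om$ nearly agrees with $\tilde\om$.} I will show that $\hat\om(S)=\hat\om^*(S,\cO)+\hat\om^*(S,\cE)$ agrees with $\tilde\om(S)$ except on three exceptional families:
\begin{itemize}
\item[(i)] sets $S$ whose defect parts on \emph{both} sides are polymer configurations, so $S$ is counted twice;
\item[(ii)] sets for which neither side gives a valid configuration, so $S$ is missed;
\item[(iii)] sets where $S\cap\overline{\cD}$ interacts with $\cD$-polymers in a way the product measure does not reproduce.
\end{itemize}
For (iii), note that for a valid configuration with $D$-vertices fixed, vertices of $\overline{\cD}\setminus N(\bigcup A_i)$ carry free weight $\lam$ and are independent, while vertices in $N(A_i)$ carry the decoration weight $\om(A_i,B_i)$ with the normalizing factor $(1+\lam)^{-|N(A_i)|}$. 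Summing out gives exactly $(1+\lam)^{n/2}\prod\om(A_i,B_i)$, which is how the identity $\hat Z=(1+\lam)^{n/2}(\Xi_\cO+\Xi_\cE)$ arises. So $\hat\om(S)\ge\tilde\om(S)$ whenever some side is valid, and
\[
|Z_G-\hat Z| \le \tilde\om(\text{double-counted}) + \tilde\om(\text{uncovered}).
\]

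\emph{Step 2: bound the exceptional weight.} Sets not covered by either polymer model have $|[S\cap\cD]|>\frac34|\cD|$ for some $2$-linked component on each side (or, equivalently, both sides large). This is precisely the regime handled by \Cref{lem:rest} and the $\cJ$-decomposition, which gives weight at most $(1+\lam)^{n/2}\exp(-\Omega(n\log^2 d))$ or similar. Double-counted sets have both sides consisting of small-closure polymers. Their weight is bounded by $(1+\lam)^{n/2}$ times the probability, under $\nu_\cD$ plus the independent product measure on $\overline{\cD}$, that the $\overline{\cD}$-side is also sparse. I will use the convergent cluster expansion via Koteck\'y--Preiss, available from \cite{GeKaSaWdo25} under condition (4), or the container lemma \Cref{lem:container_main}. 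These show that polymers cover at most $O(n/d^{\ex+4})$-ish vertices typically, with exponentially small tails, while a Chernoff bound makes $|S\cap\overline{\cD}|\approx\frac{\lam}{1+\lam}\frac n2$ overwhelmingly likely, producing large non-polymer structure. This gives an error factor $\exp(-\Omega(n/d^{\ex+4}))$ relative to $\hat Z$.

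\emph{Step 3: conclude.} This yields $|\log Z_G-\log\hat Z|=O(e^{-n/d^{\ex+4}})$. For total variation, $\norm{\hat\mu-\mu}_{TV}\le\frac12\sum_S|\tilde\om(S)/Z_G-\hat\om(S)/\hat Z|$. I will bound this by $\sum_S|\tilde\om-\hat\om|/Z_G+|1-\hat Z/Z_G|$, and both terms are controlled by Step 2.

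The main obstacle is Step 2: getting the tail of the polymer size (or closure) distribution uniformly in the weaker expansion $d^{-\ex}$, and matching the exponent $n/d^{\ex+4}$. I would first try the Koteck\'y--Preiss bound $\sum_{\Gam\ni v}|\om(\Gam)|e^{\norm{\Gam}/d^{\ex+4}}\le$ small, from \cite{GeKaSaWdo25}. This gives that the total polymer size exceeds $n/100$ with probability at most $e^{-n/d^{\ex+4}}$.
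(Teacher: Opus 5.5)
Your Steps 1 and 3 are sound, and the overall architecture matches the ingredients the paper imports from \cite{GeKaSaWdo25}. In fact your item (iii) is empty: by definition $\hat\om(S)=\tilde\om(S)\cdot\#\{\cD:\hat\Theta_\cD(S)\in\Om_\cD\}$, so $\hat Z-Z_G=\tilde\om(\text{both valid})-\tilde\om(\text{neither valid})$. The uncovered sets are handled by the $\cJ$-decomposition of \cite[Lemma 7.1]{GeKaSaWdo25}, not literally by \Cref{lem:rest}, whose family $\cS_{rest}$ also requires both sides to have more than $f(\lam)\frac n2$ vertices. Relying on condition (4) for the cluster expansion is consistent with the paper, whose cluster-expansion input is only established on $\cH'$.

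The gap is in your bound on the double-counted family. Validity for $\overline{\cD}$ is not a smallness condition. A set $Y\subseteq\overline{\cD}$ is valid as soon as each of its $2$-linked components has closure at most $3n/8$. Applying condition (2) to these components, whose neighbourhoods are disjoint, only forces $|Y|\le\frac{n/2}{1+\Omega(d^{-\ex})}$. The typical size $\frac{\lam}{1+\lam}\frac n2$ never violates this when $\lam\le\lam_0$. So a Chernoff bound on $|S\cap\overline{\cD}|$ says nothing about invalidity. To argue your way, you would have to exhibit a single $2$-linked component of the random set with closure exceeding $3n/8$. That means controlling connectivity of a sparse random subset of $G^2$ around the defects, which the hypotheses do not readily give.

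Your threshold $n/100$ is also too coarse for any version of the argument. At $\|\hat\Theta\|\approx n/100$ the sets $N(A_i)$ can cover all of $\overline{\cD}$ (each vertex has $d$ neighbours), leaving no free vertices for concentration to act on. Moreover, the trivial bound on sets with both sides of size at most $n/100$ is $e^{\Theta(n)}$, which is not negligible against $(1+\lam)^{n/2}$ when $\lam$ is small.

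The repair is to use the cluster-expansion tail on \emph{both} sides rather than concentration on one. If $S$ is valid for both sides, there are two cases.
\begin{itemize}
\item[(a)] $|S\cap\cE|,|S\cap\cO|\le n/d^3$. All such $S$ together weigh at most $\binom{n}{\le 2n/d^3}\max\{1,\lam_0\}^{2n/d^3}=e^{O(n\log d/d^3)}$. This is at most $(1+\lam)^{n/2}e^{-\Omega(\lam n)}$ because $\lam\ge\alpha\ge C_0\log^{3/2}d/\sqrt d$.
\item[(b)] For some $\cD$, $S$ is valid for $\cD$ with $\|\hat\Theta_\cD(S)\|=|S\cap\cD|>n/d^3$. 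By your own summing-out identity, the total $\tilde\om$-weight of all such $S$ is exactly $(1+\lam)^{n/2}\,\Xi_\cD\,\nu_\cD(\|\hat\Theta\|>n/d^3)$. This is at most $\hat Z\,\nu_\cD(\|\hat\Theta\|>n/d^3)$.
\end{itemize}
For case (b), exponential tilting at the large-cluster decay rate $k/d^{\ex+1}$ (the last case of $\tilde g$ in \Cref{sec:appendix_tails}) gives $\nu_\cD(\|\hat\Theta\|>n/d^3)=e^{-\Omega(n/d^{\ex+4})}$. This has the same form as the exponent in the statement, up to constants. With this argument in place of the Chernoff step, your Step 3 goes through unchanged.
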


In other words, $\hat\mu$ is a good approximation of $\mu$. Thus, it remains to sample (approximately or exactly) from $\hat\mu$. Looking at the steps leading to the definition of $\hat\mu$, we need to sample a defect side with probability $\Xi_\cD/(\Xi_\cE+\Xi_\cO)$ and then sample a decorated polymer model according to the measure $\nu_\cD$. Both of these tasks require computing the polymer partition functions, which cannot be done exactly. Hence, we are aiming for efficient estimates and approximate samplers for all the aforementioned measures. The convergence of the cluster expansion, which is the heart of the proof of \Cref{lem:approximateZ}, is the key to provide such estimates via truncation. The computation of terms of the expansion becomes a subproblem of the algorithm, which has to be done in polynomial time. 

We are now ready to write down a concrete algorithm.

\begin{algorithm}[H]
    \caption{Approximate Sampler for the Ising model\label{alg:main}}
    \begin{enumerate}
        \item Let $\epsilon_0 \coloneqq \exp\left(- \frac{n}{d^{\ex+4} \log d} \right)$. If $\epsilon\leq \epsilon_0$, sample a set via brute force. Otherwise do the following.
        \item Let $k_0$ be chosen according to \Cref{lem:Kotecky}. Compute $L_{\cD,\leq k_0}$ for $\cD\in\{\cE,\cO\}$ by \Cref{lem:compute_L}.
        \item Choose a defect side $\cD$ with probability $\dfrac{L_{\cD,\leq k_0}}{L_{\cE,\leq k_0}+L_{\cO,\leq k_0}}$.
        \item Sample a decorated polymer configuration $\hat{\Theta}$ from $\nu_\cD$ approximately. This is done in \Cref{lem:compute_nu}. Let $D$ be the set of all vertices in $\hat\Theta$.
        \item Let $S'$ be a random set in which each vertex in $\overline{\cD}\setminus D$ is contained with probability $\frac{\lam}{1+\lam}$.
        \item Output $S:=D\cup S'$.
    \end{enumerate}
\end{algorithm}
We now state the three lemmas that are called upon in \Cref{alg:main}.

\begin{lemma} \label{lem:compute_L}
    Given $\cD \in \{\cO, \cE\}$, there is an algorithm to compute $L_{\cD, \leq k}$ in time 
    \[
    O(n^2 k^8) \exp(O(kd)).
    \]
\end{lemma}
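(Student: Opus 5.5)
We follow the approach of Helmuth, Perkins and Regts (and its adaptation in \cite{JePePo23}) for computing truncated cluster expansions. The idea is to enumerate all clusters of size at most $k$ explicitly, grouped by the $2$-linked support of the cluster, and to compute their weights. The enumeration proceeds in four steps.

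\textbf{Step 1: polymers.} Every polymer in a cluster $\Gamma$ with $\norm{\Gamma}\le k$ is a $2$-linked subset $A\subseteq\cD$ with $|A|\le k$. By \Cref{l:counting2linked}, for each starting vertex there are at most $(ed^2)^{k-1}$ such sets. They can be listed by a breadth-first search in $G^2$, which gives $n\exp(O(k\log d))$ polymers in time of the same order. For small $A$ the condition $|[A]|\le\frac34|\cD|$ holds automatically, since $|A|\le k$ is small compared to $n$. Otherwise we check it in time $O(nd)$.

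\textbf{Step 2: polymer weights.} For each listed polymer we compute $\om(A)=\sum_{B\subseteq N(A)}\om(A,B)$. Since $|N(A)|\le kd$, summing over all $B$ costs $2^{kd}\cdot\mathrm{poly}(kd)=\exp(O(kd))$. This is the source of the $\exp(O(kd))$ factor. A faster route exists but is not needed: conditionally on $A$, the weight factorizes over the vertices $u\in N(A)$ as
\[
\prod_{u\in N(A)}\frac{1+\lam e^{-\beta|N(u)\cap A|}}{1+\lam}.
\]

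\textbf{Step 3: clusters.} We reduce to a sum over $2$-linked supports. Every cluster $\Gamma$ has a $2$-linked union $\bigcup_{A\in\Gamma}A$ of size at most $k$, since incompatible polymers have a $2$-linked union. So we enumerate all $2$-linked sets $U\subseteq\cD$ with $|U|\le k$, again $n(ed^2)^{k}$ of them. For each $U$ we list the polymers contained in $U$, of which there are at most $2^k$. We then enumerate all ordered tuples $(A_1,\dots,A_m)$ of such polymers with $\sum|A_i|\le k$, $m\le k$, and union exactly $U$. There are at most $(2^k)^k=\exp(O(k^2))$ such tuples, which is too crude, so instead we note that $\sum|A_i|\le k$ forces the number of tuples to be at most $\exp(O(k\log k))$. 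For each tuple we build the incompatibility graph $H(\Gamma)$ and keep the tuple only if $H(\Gamma)$ is connected.

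\textbf{Step 4: Ursell functions.} For each surviving tuple we compute $\phi(H)$ on at most $k$ vertices. A naive sum over edge subsets costs $2^{O(k^2)}$, which is too slow. Instead we use the standard identity $\phi(H)=\frac{(-1)^{|V|-1}}{|V|!}T_H(1,0)$, which relates $\phi$ to the Tutte polynomial. Equivalently, we use the deletion-free recursion over vertex subsets, computing connected-subgraph sums by inclusion–exclusion over subsets of $V(H)$. This takes time $\exp(O(k))$, as in Björklund–Husfeldt–Kaski–Koivisto.

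Multiplying the counts and costs across the steps, and using $\log d\le d$, $\log k\le k\le kd$, gives a total running time of $n^2\,\mathrm{poly}(k)\exp(O(kd))$. The $n^2$ arises from the support enumeration combined with the closure checks. We then sum $L_{\cD,\le k}=\sum_{j\le k}L_{\cD,j}$.

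The main obstacle is keeping the cluster enumeration in Step 3 within $\exp(O(kd))$ rather than $\exp(O(k^2))$. Grouping clusters by support and using that the sizes sum to at most $k$ handles this. The Ursell evaluation in Step 4 has the same issue, and we handle it via the subset-based algorithm rather than summing over spanning subgraphs.
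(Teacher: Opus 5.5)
Your outline (enumerate polymers, compute $\om(A)$ by brute force over $B\subseteq N(A)$, enumerate clusters, evaluate Ursell functions in $\exp(O(k))$ time) has the same shape as the paper's proof. However, Step~3, which you yourself identify as the crux, does not deliver the claimed bound.

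You enumerate \emph{ordered} tuples inside each support $U$ and bound their number by $\exp(O(k\log k))$. That count is essentially sharp. If $U$ is $2$-linked with $|U|=k$, then each of the $k!$ orderings of the singleton polymers $\{u\}$, $u\in U$, is a distinct cluster with union $U$: their incompatibility graph is $G^2[U]$, which is connected. You then absorb $\exp(O(k\log k))$ into $\exp(O(kd))$ via ``$\log k\le k\le kd$''. That chain only gives $k\log k\le k^2$. Absorbing $k\log k$ into $O(kd)$ needs $\log k=O(d)$, which is false once $k\ge e^{Cd}$. This is exactly the regime where the lemma is used: $d$ is a fixed constant, $n\to\infty$, and $k_0$ grows like $\log n$ (e.g.\ $k_0=\lfloor d^3\log n\rfloor$). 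There your enumeration alone costs $n^{\Omega(d^3\log\log n)}$, which is superpolynomial and would break \Cref{thm:efficient_sampling}.

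The missing idea is to enumerate \emph{unordered multisets} of polymers and restore the orderings through a multinomial factor.
\begin{itemize}
\item Since $\phi(H(\Gamma))\prod_{A\in\Gamma}\om(A)$ is invariant under permuting $\Gamma$, the $r!/\prod_i m_i!$ orderings of a multiset $\{A_1^{m_1},\dots,A_s^{m_s}\}$ with $r=\sum_i m_i$ contribute $\frac{r!}{\prod_i m_i!}\,\phi(H)\prod_i\om(A_i)^{m_i}$ in total. This can be evaluated in $\exp(O(k))$ time.
\item Multisets are few because polymers are $2$-linked. By \Cref{l:counting2linked}, $U$ contains at most $k(ed^2)^{j-1}$ polymers of size $j$. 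Evaluating $\prod_P(1-x^{|P|})^{-1}$ at $x=(2ed^2)^{-1}$ then shows there are at most $e^{O(k)}(2ed^2)^k=\exp(O(k\log d))$ multisets of total size at most $k$.
\item Combined with the $nk(ed^2)^k$ supports and the $\exp(O(kd))$ polymer weights, this gives $n\exp(O(kd))$ overall, within the stated bound.
\end{itemize}
The paper does not do this counting itself. It invokes the cluster-listing procedure from the proof of Theorem~6 of Helmuth--Perkins--Regts as a black box, with running time $n^2\exp(O(k\log d))$.

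Separately, your product formula for $\om(A)$ in Step~2 is missing the prefactor $\lam^{|A|}$. This is harmless, since you do not use that formula.
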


\begin{lemma} \label{lem:compute_nu}
    Given $\cD \in \{\cO, \cE\}$ and $\epsilon \geq \exp\left( - \frac{n}{d^{\ex+4} \log d}\right)$, there is a sampling scheme that samples $\hat{\Theta}$ from $\hat{\Omega}_{\cD}$ according to distribution $\hat{\nu}_{\cD}$ satisfying 
    \[
    \lVert \hat{\nu}_{\cD} - \nu_{\cD} \rVert_{TV} \leq \epsilon \quad \text{ in time } \quad \left( \frac{n}{\epsilon}\right)^{O(d^4)}.
    \]
\end{lemma}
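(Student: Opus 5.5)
We follow the approach of \cite{JePePo23} for the hard-core model, adapted to decorated polymers. The idea is to sample from $\nu_\cD$ polymer by polymer. We first reduce to a truncated model, then approximate conditional marginals using cluster-expansion truncations of ratios of partition functions, and finally peel off one polymer at a time.

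\textbf{Step 1: truncation.} Let $\hat{\cP}_\cD^{\le K}$ consist of the decorated polymers $(A,B)$ with $|A|\le K$, where $K = C d^{\ex+4}\log(n/\epsilon)$ for a suitable constant $C$. The Koteck\'y--Preiss bound established in \cite{GeKaSaWdo25} controls $\sum_{A\ni v}\omega(A)e^{g(A)}$ with $g(A)$ growing linearly in $|A|$ (roughly like $|A|/d^{\ex+2}$). A union bound over vertices then shows that, under $\nu_\cD$, the probability that some polymer has size larger than $K$ is at most $n e^{-\Omega(K/d^{\ex+4})}\le\epsilon/3$. The restriction $\epsilon\ge \exp(-n/(d^{\ex+4}\log d))$ ensures that $K$ stays well below $n$, so the truncation is meaningful. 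Hence $\nu_\cD$ is $\epsilon/3$-close in total variation to the truncated measure $\nu_\cD^{\le K}$.

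\textbf{Step 2: sequential sampling.} Order the vertices of $\cD$ as $v_1,\dots,v_{n/2}$. Build the configuration step by step: at step $i$, if $v_i$ is not already covered by, or $2$-adjacent to, a chosen polymer, decide which decorated polymer (if any) contains $v_i$ as its least vertex. The conditional law of this choice is a ratio of partition functions of truncated polymer models on the remaining allowed region $U\subseteq\cD$. Concretely, a polymer $(A,B)$ with $\min A=v_i$ and $A\subseteq U$ is chosen with probability
\[
\omega(A,B)\,\frac{\Xi(U\setminus A^{+})}{\Xi(U)},
\]
where $A^{+}$ denotes $A$ together with the vertices $2$-linked to it. The candidate polymers are $2$-linked sets containing $v_i$ of size at most $K$, together with their decorations $B$. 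By \Cref{l:counting2linked} there are at most $(ed^2)^K 2^{dK}$ of them, which is $(n/\epsilon)^{O(d^{4})}$ after absorbing $d^{\ex}\le d^2$ into the exponent.

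\textbf{Step 3: estimating the ratios.} Each ratio is $\exp$ of a difference of cluster expansions, which converge absolutely for the restricted models as well, since the Koteck\'y--Preiss condition is monotone under removing polymers. Truncating both expansions at cluster size $k=O(d^{\ex+2}\log(n^2/\epsilon))$ approximates each ratio to within a multiplicative factor $1\pm \epsilon/n^2$. By \Cref{lem:compute_L}, which applies verbatim to subregions, the truncated sums are computed in time $n^{O(1)}e^{O(kd)}=(n/\epsilon)^{O(d^{4})}$. We sample from the resulting approximate conditional law, normalized. Over the at most $n$ steps, the multiplicative errors accumulate to a total variation error of at most $\epsilon/3$. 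Combining this with Step 1 gives $\lVert\hat\nu_\cD-\nu_\cD\rVert_{TV}\le\epsilon$ within the stated running time.

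\textbf{Main obstacle.} The hardest part is verifying that the Koteck\'y--Preiss tail bound holds uniformly for every restricted region $U$ arising during sequential sampling, with exponent rates matching $d^{\ex+4}$. This is what makes both the polymer-size truncation and the cluster truncation effective. We would first check that the proof in \cite{GeKaSaWdo25} uses only local counting around a vertex, which is unaffected by restricting to $U$.
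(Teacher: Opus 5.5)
\textbf{The approach matches the paper's, but the running-time claim fails with your parameters.} Your scheme is structurally the paper's. You use the sequential sampler of \cite{HePeRe20}: process vertices in order, and choose the decorated polymer having the current vertex as least element (or none) with probability given by a ratio of partition functions of restricted polymer models. You restrict to polymers of bounded size, approximate each ratio by truncated cluster expansions of the restricted models (which converge because they are subfamilies of $\hat{\cP}_\cD$), and sum the per-step errors over $n$ steps. All of that is fine.

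The gap is in the running time, which is the actual content of the lemma; your ``absorbing $d^{\ex}\le d^2$'' step is an arithmetic error. With $K=Cd^{\ex+4}\log(n/\epsilon)$, the candidate count you use is $(ed^2)^K2^{dK}=(n/\epsilon)^{\Theta(d^{\ex+5})}$, i.e.\ up to $(n/\epsilon)^{\Theta(d^{7})}$. Even the factor $(ed^2)^K=(n/\epsilon)^{\Theta(d^{\ex+4}\log d)}$ alone exceeds $(n/\epsilon)^{O(d^4)}$ for every $\ex\ge 0$. Similarly, truncating the cluster expansions at $k=O(d^{\ex+2}\log(n/\epsilon))$ costs $e^{O(kd)}=(n/\epsilon)^{O(d^{\ex+3})}$, which is outside the budget once $\ex>1$. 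Your heuristic rate $|A|/d^{\ex+2}$ is also inconsistent with the bound $ne^{-\Omega(K/d^{\ex+4})}$ you then use, and both are weaker than what is available.

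\textbf{The missing ingredient is the correct tail rate.} The estimate from \cite{GeKaSaWdo25} recorded in \Cref{sec:appendix_tails} gives $|L_{\cD,\ge k}|\le nd^{-(\ex+1)}e^{-\tilde g(k)}$, where $\tilde g(k)=\frac{\sqrt d\,k}{2}\log\ta$ for $d/\log\log d<k\le d^3\log n$ and $\tilde g(k)=k/d^{\ex+1}$ for $k>d^3\log n$. The underlying Koteck\'y--Preiss estimate gives, up to the prefactor, the same decay for $\sum_{A\ni v,\,|A|\ge k}\omega(A)$. Hence it also bounds the probability under $\nu_\cD$ of seeing a polymer of size at least $k$.

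So both your polymer-size cutoff and your cluster-size cutoff can be taken to be the $k_0$ of \Cref{lem:Kotecky}. This is of order $d^3\log n$ or $d^{\ex+1}\log(n/\epsilon)$, with $n/\epsilon$ replaced by $n^3/\epsilon^2$ when you need the ratios to accuracy $\epsilon^2/n^2$. Then $e^{O(k_0d)}$ is $n^{O(d^4)}$ or $(n/\epsilon)^{O(d^{\ex+2})}$, and both lie within $(n/\epsilon)^{O(d^4)}$ precisely because $\ex<2$. This is where the exponent $d^4$ in the statement comes from. With these cutoffs substituted, the rest of your argument goes through as in the paper.
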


\begin{lemma}\label{lem:Kotecky}
    Fix $\epsilon>0$, $\codeg \geq 1$, and $0 \le \ex < 2$. If $\lam \le \lam_0$ and $\lam(1 - e^{-\beta}) \ge \frac{C_0 \log^{3/2}d}{d^{\exx}}$ and $G\in\cH'(n,d,\codeg,\ex)$, then there exists $k_0\in \{\lfloor d^3\log n\rfloor, \lfloor d^3\log n+1\rfloor, \lceil d^{\ex+1}\log(16n/d^{\ex+1}\epsilon)\rceil\}$ such that
    $$|\Xi_\cD-L_{\cD,< k_0}|\leq \epsilon/16.$$
\end{lemma}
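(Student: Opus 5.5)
We read the statement as a bound on the truncation error of the cluster expansion, that is, $|\log \Xi_\cD - L_{\cD,<k_0}| = |L_{\cD,\ge k_0}| \le \epsilon/16$, since $\log\Xi_\cD = L_{\cD,<k}+L_{\cD,\ge k}$ for every $k$. The plan is therefore to bound the tail $\sum_{\Gamma\in\cC_\cD,\ \norm{\Gamma}\ge k}|\om(\Gamma)|$. We then pick $k_0$ among the three listed values according to which tail estimate is strongest in the relevant regime of $\epsilon$.

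\textbf{Step 1: a Koteck\'y--Preiss bound with a size-dependent decay.} We take from \cite{GeKaSaWdo25} the verification of the Koteck\'y--Preiss condition for $\cP_\cD$ on $G\in\cH'(n,d,\codeg,\ex)$ in the stated range of $\lam,\beta$. It is in the form: for every polymer $A$,
\[
\sum_{A'\nsim A}\om(A')e^{f(A')+g(A')}\le f(A),
\]
with $f(A)=|A|/d^{\ex+1}$ (or similar) and a decay function $g$. By the Koteck\'y--Preiss theorem this gives, for each vertex $v$,
\[
\sum_{\Gamma\ni v}|\om(\Gamma)|e^{g(\Gamma)}\le f(\{v\}),
\]
where $g(\Gamma)=\sum_{A\in\Gamma}g(A)$ and $\Gamma\ni v$ means that some polymer of $\Gamma$ is incompatible with $\{v\}$. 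Summing over the $n/2$ vertices of $\cD$ yields
\[
|L_{\cD,\ge k}|\le \frac{n}{2d^{\ex+1}}\,e^{-\min_{\norm\Gamma\ge k}g(\Gamma)}.
\]
We need two choices of $g$. The first is the global one, $g(A)=|A|/d^{\ex+1}$, which comes from the weak expansion (2) through the container estimates (\Cref{lem:container_main} with $b-a\gtrsim a/d^{\ex}$). The second is a stronger one for small polymers, $|A|\le d^3\log n$. Using condition (4), such a polymer has $|N(A)|\ge\sqrt d|A|$, so
\[
\om(A)\le 2^{|N(A)|}\lam^{|A|}(1+\lam)^{-|N(A)|}\cdot(\text{corrections from }\beta)
\]
decays like $e^{-c\alpha\sqrt d|A|}$. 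This permits $g(A)=c'|A|\log d$ or similar on small sizes.

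\textbf{Step 2: choosing $k_0$.} With the global decay, taking $k_0=\lceil d^{\ex+1}\log(16n/(d^{\ex+1}\epsilon))\rceil$ gives
\[
|L_{\cD,\ge k_0}|\le \frac{n}{2d^{\ex+1}}\cdot\frac{d^{\ex+1}\epsilon}{16 n}\le\epsilon/16
\]
directly. This choice is used when it is at most $d^3\log n$, i.e.\ when $\epsilon$ is not tiny, so that the runtime of \Cref{lem:compute_L} stays polynomial. Otherwise $\epsilon$ is small but, in \Cref{alg:main}, still $\ge\epsilon_0$. In that case we take $k_0=\lfloor d^3\log n\rfloor$ (or the next integer, to handle the rounding/parity of where the threshold of condition (4) sits). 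We split the tail into clusters with all sizes below $d^3\log n$, which are controlled by the strong decay, and clusters containing a large polymer. For clusters containing a polymer of size $>d^3\log n$, we use the global decay: it gives a factor $\exp(-d^3\log n/d^{\ex+1})=n^{-d^{2-\ex}}$, which beats $\epsilon\ge\epsilon_0$ after comparing exponents.

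\textbf{Main obstacle.} The delicate step is Step 1, namely extracting from \cite{GeKaSaWdo25} a decay function that is simultaneously valid for every polymer while strong enough on small sizes. Concretely, the issue is the interplay at the threshold $|A|\approx d^3\log n$, where condition (4) stops applying. We would try first to define $g$ piecewise, as $|A|\log d$ below the threshold and $|A|/d^{\ex+1}$ above, and check that the Koteck\'y--Preiss sum remains bounded by $f(A)$ using the 2-linked count of \Cref{l:counting2linked}. This accounts for the three candidate values of $k_0$ in the statement.
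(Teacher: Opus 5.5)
Your framework is the paper's. You take from \cite{GeKaSaWdo25} a Koteck\'y--Preiss tail bound $|L_{\cD,\ge k}|\le n d^{-(\ex+1)}e^{-\tilde g(k)}$, with strong decay for sizes up to $d^3\log n$ and weak decay $k/d^{\ex+1}$ beyond, and then solve $\tilde g(k_0)\ge\log\bigl(16n/(\epsilon d^{\ex+1})\bigr)$. Your reading of the statement as a bound on $|\log\Xi_\cD-L_{\cD,<k_0}|=|L_{\cD,\ge k_0}|$ is also right. The gap is in Step 2, where the case split is inverted. You use $k_0=\lfloor d^3\log n\rfloor$ exactly when $\lceil d^{\ex+1}\log(16n/(d^{\ex+1}\epsilon))\rceil>d^3\log n$, i.e.\ (up to rounding) when $\epsilon/16<\frac{n}{d^{\ex+1}}n^{-d^{2-\ex}}$. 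In that regime your bound for clusters containing a polymer of size $>d^3\log n$, namely $\frac{n}{2d^{\ex+1}}n^{-d^{2-\ex}}$, is \emph{not} below $\epsilon/16$ once $\epsilon$ is more than a factor $2$ under the threshold. So the claim that it ``beats $\epsilon$'' fails by the very definition of the case. Appealing to $\epsilon\ge\epsilon_0$ does not help, for two reasons. First, it is not a hypothesis of the lemma, which is stated for every $\epsilon>0$; an $\epsilon$-independent $k_0$ cannot give $|L_{\cD,\ge k_0}|\le\epsilon/16$ for all $\epsilon$ unless that tail vanishes. Second, even granting it, $\epsilon_0=\exp(-n/(d^{\ex+4}\log d))$ is far smaller than $n^{-d^{2-\ex}}$ whenever $n\gg d^6\log d\log n$, e.g.\ for $Q^d$.

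The repair is easy, because your first branch already proves the lemma for every $\epsilon$. The paper's $\tilde g$ satisfies $\tilde g(k)\ge k/d^{\ex+1}$ in all three regimes, since the small-size decays are much stronger. Hence $k_0=\lceil d^{\ex+1}\log(16n/(d^{\ex+1}\epsilon))\rceil$ always works. The paper's split is the reverse of yours: it uses $\lfloor d^3\log n\rfloor$ when $\epsilon$ is large enough that the strong decay $\frac{\sqrt d k}{2}\log\ta$ at $k=d^3\log n$ already suffices, and otherwise takes $k_0=\max\{\lfloor d^3\log n+1\rfloor,\lceil d^{\ex+1}\log(16n/(\epsilon d^{\ex+1}))\rceil\}$. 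Your runtime reason for capping $k_0$ at $d^3\log n$ is also unfounded: by \Cref{lem:compute_L} the $\epsilon$-dependent $k_0$ costs $(n/\epsilon)^{O(d^{\ex+2})}$, which is polynomial. A smaller point in Step 1: the bound $\om(A)\le 2^{|N(A)|}\lam^{|A|}(1+\lam)^{-|N(A)|}$ gives no decay for $\lam\le 1$. The correct estimate is the product formula $\om(A)=\lam^{|A|}\prod_{v\in N(A)}\frac{1+\lam e^{-\beta d_A(v)}}{1+\lam}\le\lam^{|A|}\ta^{-|N(A)|}$ with $\ta=\frac{1+\lam}{1+\lam e^{-\beta}}$. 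Together with condition (4), this gives the $\sqrt d|A|\log\ta$ decay.
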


Lemmas \ref{lem:compute_L} and \ref{lem:compute_nu} are proven in Sections \ref{sec:sample_small_cluster} and \ref{sec:compute_nu}, respectively. \Cref{lem:Kotecky}, called upon in step (2) of \Cref{alg:main}, demonstrates that clusters of size at most $k_0$ are enough to compute the partition function closely enough. The statement of the lemma is implied by the tail estimates for the cluster expansion described in \cite{GeKaSaWdo25}, with slight modifications. The proof is deferred to \Cref{sec:appendix_tails}, which also explains what the right choice of $k_0$ is.

Assuming the above three lemmas, we may now give the proof of \Cref{thm:efficient_sampling}.

\begin{proof}[Proof of \Cref{thm:efficient_sampling}]
    Start by setting $\epsilon_0 \coloneqq \exp\left(- \frac{n}{d^{\ex+4} \log d} \right)$.

    If $\epsilon \leq \epsilon_0$, then we sample subset $S \subseteq V$according to the Gibbs measure $\mu=\mu_{\lambda, \beta}$ in brute-force fashion. There are at most $2^n$ subsets of $V$ and evaluating their weight is polynomial in $n$. Thus, we may sample a set according to the probability $\mu$ in time
    $2^{n+o(n)}\le \left( \frac{1}{\epsilon}\right)^{d^6 \log d},$
    since $\ex<2$.

    Thus, we may assume that $\epsilon \geq \epsilon_0$. By \Cref{lem:approximateZ}, we have
    \begin{align*}
        \lVert\hat \mu - \mu \rVert_{TV} = O\left( \exp\left( - \frac{n}{d^{\ex+4}}\right)\right) \leq \frac{\epsilon}{2}.
    \end{align*}
    This means that it is enough to sample from $\hat{\mu}$ with an error of at most $\epsilon/2$.
    Hence, we need to pick a defect side $\cD$ with the correct distribution, and then we need to sample from $\hat{\mu}^*$ given this defect side $\cD$.
    
    The probability that $\cD \in \{\cO, \cE\}$ is indeed the defect side is given by
    \begin{align} \label{eq:prob_defect}
        \mathbb{P}_{\hat \mu^*}\left(\bm{\cD}=\cD \right) = \frac{\Xi_{\cD}}{\Xi_{\cO}+\Xi_{\cE}},
    \end{align}
    and thus we aim to approximate $\Xi_{\cO}$ and $ \Xi_{\cE}$.
    By \Cref{lem:Kotecky}, 
    \begin{align} \label{eq:approx_good}
        |\Xi_{\cD} - L_{\cD, < k_0}| \leq \frac{\epsilon}{16},
    \end{align}
    for the appropriate choice of $k_0$. By \Cref{lem:compute_L}, we may compute $L_{\cD, \leq k_0}$ in time 
    \begin{align} \label{eq:time_forL}
        O(n^2 k_0^8) \exp(O(k_0 d)).
    \end{align}
    If $k_0=\lfloor d^3\log n \rfloor$ or $k_0=\lfloor d^3\log n +1\rfloor$, then the time in \eqref{eq:time_forL} is $ O(n^{2}d^{24}\log^{8}n)n^{O(d^4)}$. If $k_0=\lceil d^{\ex+1}\log(16n/d^{\ex+1}\epsilon)\rceil$, then the time in \eqref{eq:time_forL} is $O\left(n^{2}d^{8(\ex+1)}\log^8\left(\frac{n}{\epsilon}\right)\right)\left(\frac{n}{\epsilon}\right)^{O(d^{\ex+2})}$. 
    
    In any case, since $\ex<2$ the running time for computing $L_{\cD,\leq k_{0}}$ is at most
    \begin{equation*}\label{eqn:cluster_time}
        \left(\frac{n}{\epsilon}\right)^{O(d^4)}.
    \end{equation*}  
    Now pick the defect side $\bm{\cD}$ to be $\cD$ with probability
    \begin{align*}
    \frac{L_{\cD, \leq k_0}}{L_{\cO, \leq k_0} + L_{\cE, \leq k_0}}.
    \end{align*}
    Using \eqref{eq:approx_good} and noting that $\Xi_{\cO}, \Xi_{\cE} \geq 1$, this probability approximates \eqref{eq:prob_defect} up to an error of at most $\epsilon/8$.
    Given a defect side $\cD$, by \Cref{lem:compute_nu} we may also sample a decorated polymer configuration $\hat{\Theta} \in \hat{\Omega}_{\cD}$ in time polynomial in $n$ and $\frac{1}{\epsilon}$ with error $\frac{\epsilon}{8}$.
    Construct a subset $S \subseteq V$ containing each vertex in $\hat{\Theta}$. Then add every vertex $v\in\overline{\cD} \setminus N(D)$ independently with probability $\frac{\lam}{1+\lam}$ to $S$, recalling that $D \coloneqq \bigcup_{(A,B) \in \hat{\Theta}}(A\cup B)$. Output $S$. This gives our polynomial-time sampling scheme.
    
    Finally, the above proof also gives an FPTAS for computing the partition function $Z_G(\lam,\beta)$: In the case $\epsilon\leq\epsilon_0$, we compute the partition function directly, while for $\epsilon\geq\epsilon_0$ we gave an approximation by approximating the partition functions of the polymer models. Indeed,
    \begin{align*}
        \left|1-\frac{Z}{(1+\lam)^{n/2}(L_{\cE,\leq{k_0}}+L_{\cO,\leq{k_0}})}\right| &=\left|1-\frac{Z}{(1+\lam)^{n/2}(\Xi_{\cE}+\Xi_{\cO})}\cdot\frac{\Xi_{\cE}+\Xi_{\cO}}{L_{\cE,\leq{k_0}}+L_{\cO,\leq{k_0}}}\right|\\
        &\leq |1-e^{\epsilon/2}(1-\epsilon/8)|\leq\epsilon,
    \end{align*}
    where the first inequality uses \Cref{lem:approximateZ} and equation \eqref{eq:approx_good}. This finishes the proof.
\end{proof}

\subsection{Sampling clusters of bounded size -- proof of \Cref{lem:compute_L}} \label{sec:sample_small_cluster}

In this subsection, we aim to efficiently compute a truncation of the cluster expansion, $L_{\cD, \leq k}$. The computations performed will be exact: we will list all polymers and clusters of the appropriate sizes and compute their weights exactly to derive $L_{\cD, \leq k}$. We follow an approach from \cite[Theorem 8]{JeKePe20}.

Recall that a cluster $\Gamma$ is an ordered tuple of polymers whose incompatibility graph is connected, and whose size is $\norm{\Gamma} \coloneqq \sum_{(A, B) \in \Gamma} |A|$. We denote $\mathcal{C}_{\cD, \leq k} \coloneqq \{\Gamma\in \cC_\cD: \norm{\Gamma} \leq k\}$ and
\[
L_{\cD, \leq k} \coloneqq \sum_{\Gamma \in \cC_{\cD, \leq k}} \omega(\Gamma).
\]
Also recall that a decorated polymer $(A, B) \in \hat{\cP}_{\cD}$ consists of a $2$-linked set $A \subseteq \cD$ and a set $B \subseteq N(A)$. Since the compatibility relation only depends on $A$, we first focus on the set $A$ and later sum over all subsets $B \subseteq N(A)$. 

In our non-decorated polymer model $\cP_{\cD}$, a polymer $A \in \cP_{\cD}$ is a $2$-linked set $A \subseteq \cD$ with $|[A]| \leq \frac{3}{4} \cdot |\cD|$, and whose size is given by $|A|$. First, we seek to list all polymers and clusters. Denote $\cP_{\cD, \leq k} \coloneqq \{A \in \cP_{\cD}: \; |A|\leq k\}$. By \Cref{l:counting2linked}, there are at most $n(ed^2)^k = n \exp(O(k \log d))$ many $2$-linked sets of size $k$ in our $n$-vertex, $d$-regular graph $G$. Therefore, there are at most
$n\cdot k \exp(O(k \log d))$ polymers in $\cP_{\cD, \leq k}$. Furthermore, by \cite[Lemma 3.4]{PaRe17} we can list all of them in time 
\begin{align} \label{eq:list_polymers}
    O(n^2 k^7 (ed^2)^{2k})= O(n^2 k^7 \exp(4k \log d)).
\end{align}
Using the algorithm in \cite[Proof of Theorem 6]{HePeRe20}, when given a list of the polymers in $\cP_{\cD, \leq k}$, we may list all clusters $\Gamma$ of $\cP_{\cD}$ with $\lVert \Gamma \rVert \leq k$ in time
\begin{align} \label{eq:list_clusters}
    n^2 \exp\left(O(k \log d)\right).
\end{align}
Now, we seek to compute the weights of these polymers and clusters. Given a non-decorated polymer $A$, recall that its weight is
\[
\omega(A) \coloneqq \sum_{B \subseteq N(A)} \frac{\lambda^{|A|+|B|}}{(1+\lambda)^{|N(A)|}} e^{-\beta |E(A, B)|}.
\]
If $|A| \le k$, then there are at most $2^{kd}$ subsets of $N(A)$, and given such a subset the number of edges $|E(A, B)|$ is determined. Thus, we can compute the weight of a polymer $A$ of size at most $k$ in time $\exp(O(kd))$.

To compute the weight of a cluster, we need the weight of the polymers that it consists of and the Ursell function $\phi$ of its incompatibility graph. Given two polymers $A_1, A_2 \in \cP_{\cD, \leq k}$, checking their incompatibility is equivalent to checking whether $A_1 \cup A_2$ is connected in $G^2$. Since $|A_1 \cup A_2| \leq 2k$ and the degree in $G^2$ is at most $d^2$, using breadth-first search the incompatibility graph of a cluster in $\cC_{\cD, \leq k}$ can be constructed in time $O(k^3d^2)$.
By \cite[Lemma 5]{HePeRe20} (see also \cite{BjKoHuKa08} for a more general result on the Tutte polynomial), the Ursell function of the incompatibility graph can be computed in time $\exp(O(k))$. Combining this with the time $\exp(O(kd))$ it takes to compute the weight of a single polymer, we may thus compute the weight of a cluster $\Gamma$ with $\lVert \Gamma \rVert \leq k$ in time at most
\begin{align} \label{eq:weight_cluster}
    k \exp(O(kd)),
\end{align}
since $\Gamma$ contains at most $k$ polymers from $\cP_{\cD, \leq k}$.

Combining \eqref{eq:list_polymers}, \eqref{eq:list_clusters}, and \eqref{eq:weight_cluster}, we get an algorithm that runs in time
\begin{align*}
    \left( n^2 k^7 \exp(4k \log d) + n^2 \exp(O(k \log d))\right) \cdot k \exp(O(kd)) = n^2 k^8 \exp(O(kd)),
\end{align*}
which finishes the proof of Lemma \ref{lem:compute_L}.

\subsection{Sampling a polymer configuration -- proof of \Cref{lem:compute_nu}} \label{sec:compute_nu}

In this subsection, given a defect side $\cD$, we aim to sample a polymer configuration $\hat{\Theta}$ from $\hat{\Omega}_{\cD}$ according to the distribution $\nu_{\cD}$.

Let $\epsilon \geq \epsilon_0= \exp \left( - \frac{n}{d^{\ex+4} \log d}\right)$. By \Cref{lem:Kotecky}, we get some $k_0 \ge 1$ such that
\begin{equation}\label{eq:error_truncation}
    |\Xi_{\cD} - L_{\cD, \leq k_0}| \leq \frac{\epsilon}{16}.
\end{equation}
To sample a polymer configuration, we follow a procedure from \cite{HePeRe20} (also used in \cite{JeKePe20}).
Given a polymer configuration $\hat \Theta \in \hat \Omega_{\cD}$ and a set of vertices $S$, we denote
$$
\hat \cP(\hat \Theta, S) \coloneqq \{ (A, B) \in \hat \cP_{\cD} : A \cap S = \varnothing, (A, B) \text{ compatible with } \hat \Theta\}.
$$
This is the set of polymers $(A, B)$ such that $A$ does not intersect $S$ and $\hat \Theta \cup \{ (A, B) \}$ is still a decorated polymer configuration.
Recall that for any set of polymers $\cP$ with the set of polymer configurations denoted by $\Omega(\cP)$, its polymer model partition function $Z(\cP)$ is given by
\[
Z(\cP) \coloneqq \sum_{\Theta \in \Omega(\cP)} \prod_{(A, B) \in \Theta} \omega(A, B).
\]
Using this, we may sample a polymer configuration $\hat{\Theta}$ from $\nu_{\cD}$ via the following procedure.

\begin{algorithm}[H] 
    \caption{Sampler for $\nu_{\cD}$ \label{alg:sample-nu}}
    \begin{enumerate}
        \item Fix an ordering $v_1, ..., v_n$ of the vertices of $G$. Set $\hat \Theta_0 \coloneqq \varnothing$ and $S_0 \coloneqq \varnothing$.
        \item For $j=0,1,\ldots,n-1$:
        \begin{enumerate}
            \item Collect  all decorated polymers $(A, B)$ from $\hat \cP( \hat \Theta_j, S_j)$ such that $v_{j+1} \in A$ and put them into a list $\cL(j)$.
            \item Sample a polymer from $\cL(j) \cup \{\varnothing\}$ with probability
            \begin{equation} \label{eq:probability_polymer}
                \Pr{\mathbf{(A, B)}= (A, B)} \coloneqq \frac{\omega(A, B) \cdot Z\bigl(\hat \cP( \hat \Theta_j \cup (A, B), S_j \cup \{v_{j+1}\})\bigr)}{Z\bigl(\hat \cP(\hat \Theta_j, S_j)\bigr)}.
            \end{equation}
            \item Set $\hat \Theta_{j+1} \coloneqq \hat \Theta_j \cup \{(A, B)\}$ and $S_{j+1} \coloneqq S_j \cup \{v_{j+1}\}$.
        \end{enumerate}
        \item Output $\hat \Theta=\hat \Theta_n$.
    \end{enumerate}
\end{algorithm}

By \cite[Lemma 11]{HePeRe20}, the distribution of the output $\hat{\Theta}$ is exactly $\nu_{\cD}$. However, to achieve polynomial time, we need to modify \Cref{alg:sample-nu} and settle for a sufficiently good approximation of $\nu_{\cD}$. 

Fix $\epsilon > 0$. By \eqref{eq:error_truncation}, a polymer configuration sampled from $\hat{\Omega}_{\cD}$ with distribution $\nu_{\cD}$ is contained in $\hat{\Omega}_{\cD, \leq k_0}$ with probability at least $1- \frac{\epsilon}{16}$. Thus, it suffices to consider polymers from $\hat \cP_{\cD, \leq k_0}$. Hence, we modify step (a) as follows: When constructing the list $\cL(j)$, consider only polymers that have size at most $k_0$. Denote this modified list by $\cL_{\leq k_0} (j)$.

To compute the list $\cL_{\leq k_0}(j)$ of polymers $(A, B) \in \hat \cP_{\cD, \leq k_0}(\hat \Theta_j, S_j)$ such that $v_{j+1} \in A$, we use the procedure described in \Cref{sec:sample_small_cluster} to compute all polymers in $\hat \cP_{\cD, \leq k_0}$ and then eliminate polymers that are incompatible with $\hat{\Theta}_j$, where $A \cap S_j \neq \varnothing$ or $v_{j+1} \notin A$, from this list. That is, by \eqref{eq:list_polymers} we can list all polymers in $\hat \cP_{\cD, \leq k_0}$ in time $O(n^2 k_0^7 \exp(4k_0 \log d))$.
Given a polymer from $(A, B) \in \hat \cP_{\cD, \leq k_0}$, it remains to check that $A \cap S_j=\varnothing$, that $v_{j+1} \in A$, and that $(A, B)$ is compatible with $\hat{\Theta}_j$. The first two conditions may easily be checked by going through the at most $k_0$ vertices in $A$. For the third condition, we go through the vertices $v \in \bigcup_{(C, D) \in \hat \Theta_j} C$ and check whether $\text{dist}(v, A) \leq 2$. All in all, checking whether a polymer $(A, B)$ from $\hat \cP_{\cD, \leq k_0}$ is part of $\cL_{\leq k_0}(j)$ takes time $O(n^2)$.
Summarizing, step (a) takes time $O(n^4 k_0^7 \exp(4 k_0 \log d))$.

Now we proceed to step (b). The probability in \eqref{eq:probability_polymer} represents the probability that $(A, B)$ is added to $\hat \Theta$ given that $\hat \Theta_j \subseteq \hat \Theta$. To approximate this probability, we use a fundamental identity from \cite{ScoSo05} (see also \cite{HePeRe20}) which states that for any vertex $v$,
\begin{equation} \label{eq:fundamental_identity}
    Z\bigl(\hat \cP(\hat \Theta, S)\bigr)= Z\bigl(\hat \cP(\hat \Theta, S \cup \{v\})\bigr) + \sum_{\substack{(A, B) \in \hat \cP(\hat \Theta, S)\\ v \in A}} \omega(A, B) \cdot Z\bigl(\hat \cP(\hat \Theta \cup (A, B), S \cup \{v\})\bigr).
\end{equation}
The first summand corresponds to the empty polymer.
Setting 
\[Y(A, B)\coloneqq \omega(A, B) \cdot Z\bigl(\hat\cP(\hat \Theta_j \cup (A, B), S_j \cup \{v_{j+1}\})\bigr)\] and using \eqref{eq:fundamental_identity} we may rewrite the probability in \eqref{eq:probability_polymer} as
\[
\Pr{\mathbf{(A, B)}= (A, B)} = \frac{Y(A, B)}{Y(\varnothing) + \sum_{\substack{(C, D) \in \hat\cP(\hat\Theta_j, S_j)\\ v_{j+1} \in C}} Y(C, D)}.
\]
To approximate this probability, we compute fine approximations to the terms $Y(A, B)$ in this equation.
Set $\epsilon' \coloneqq \frac{\epsilon^2}{160 n^2}$. Given $\epsilon'$-approximations to $Y(A, B)$ for all $(A, B) \in \hat{\cP}(\hat{\Theta}_j, S_j)$, we obtain an $\frac{\epsilon}{8}$-approximation to \eqref{eq:probability_polymer} by \cite[Lemma 5.3]{HePeRe20}.
To get such an $\epsilon'$-approximation to $Y(A, B)$, we need to compute the weight $\omega(A, B)$ as well as a good approximations to partition functions of the form $Z\bigl(\hat{\cP}(\hat{\Theta}_j, S_j)\bigr)$ for $\hat{\Theta}_j \in \hat{\Omega}_{\cD}$ and $S_j \subseteq V$.

All the polymers under consideration are from $\hat{\cP}_{\cD, \leq k_0}$ and their weight can be computed in time $O(k_0 d)$. For the computation of an approximation to the partition function $Z\bigl(\hat{\cP}(\hat{\Theta}_j, S_j)\bigr)$, observe that the polymer models $\hat{\cP}(\hat{\Theta}_j, S_j)$ under consideration are subsets of the original polymer model $\hat{\cP}_{\cD}$. However, all results from \Cref{sec:sample_small_cluster} hold for subsets of $\hat{\cP}_{\cD}$. That is, we may efficiently approximate each partition function $Z\bigl(\hat{\cP}(\hat{\Theta}_j, S_j)\bigr)$ via a truncation of its cluster expansion, as in \Cref{lem:compute_L}. For a formal treatment of this argument, see \cite[Lemma 2.8]{HePeRe20}. Note that we ask for $\epsilon'$-approximations, and by \Cref{lem:Kotecky} one may find $k_0' \in \{\lfloor d^3 \log n \rfloor, \lfloor d^3 \log n+1 \rfloor, \lceil d^{\ex +1} \log \left(\frac{160n^3}{\epsilon^2 d^{\ex+1}}\right) \rceil\}$ such that
$Z\bigl(\hat{\cP}(\hat{\Theta}_j, S_j)\bigr)$ is approximated with error at most $\epsilon'$ by the first $k_0'$ terms of its cluster expansion. This truncation after the first $k_0'$ terms may be computed in time $O(n^2 k_0'^8 \exp(O(k_0' d))$ as in \Cref{lem:compute_L}.

Putting this together, we may compute $Y(A, B)$ for a single $(A, B) \in \hat{\cP}(\hat{\Theta}_j, S_j)$ in time $O(n^2 k_0'^8 \exp(O(k_0' d))$, and there are at most $O(n^2 k_0^7 \exp(4k_0 \log d))$ many such polymers since $(A, B) \in \hat{\cP}(\hat{\Theta}_j, S_j) \subseteq \hat{\cP}_{\cD, \leq k_0}$. Thus, step (b) can be completed in time
$$O(n^4 k_0^7 k_0'^8 \exp(O(k_0' d)).$$
Since the loop in step (2) runs for all $n$ vertices, the runtime of \Cref{alg:sample-nu} with the modifications for step (a) and (b) is
\[
O\bigl(n^5 k_0^7 k_0'^8 \exp\left(O(k_0' d)\right)\bigr)= \left(\frac{n}{\epsilon} \right)^{O(d^{\ex+2})},
\]
where the equality stems from plugging in the possible values of $k_0, k_0'$ as in the proof of \Cref{thm:efficient_sampling}.
Recalling that $\ex<2$ finishes the proof of \Cref{lem:compute_nu}.

\section{Discussion and Open Problems} \label{sec:discussion}

In this paper, we have excluded a natural sampling algorithm, namely the Glauber dynamics, as an efficient sampler for the antiferromagnetic Ising model on a large class of bipartite, regular expander graphs. This extends results of Galvin and Tetali \cite{GaTe2006} on the hard-core model. There are some questions naturally arising from this work. First, we conjecture that the container lemmas we use can be extended up to $\lam(1-e^{-\beta})\geq\tilde\Om(1/d)$, as it is conjectured for the hard-core model in \cite{galvin2011threshold}. From that, the slow mixing result would extend for parameters in this range. 
\begin{question}\label{quest:threshold}
    Is it true that the Glauber dynamics for the antiferromagnetic Ising model mix exponentially slowly for $G\in\cH(n,d,\codeg,\ex)$ as long as $\lam(1-e^{-\beta})=\tilde\Om(1/d)$? 
\end{question}
This would be optimal up to polylog factors, since the Glauber dynamics mix fast below the uniqueness threshold for any antiferromagnetic $2$-spin system \cite{chen2023rapid, li2013correlation}. This threshold is $\sim e/d$ in the hard-core model. We note that in our version of the Ising model, a positive answer to \Cref{quest:threshold} would also be an optimal bound: for our choice of parameters, the uniqueness equation has solution $\hat{x}_d\leq\lam$ which implies we are in the uniqueness regime when $d\lam(1-e^{-\beta})<1$ (see \cite[Definition 2.3]{li2013correlation}), and hence the Glauber dynamics mix rapidly.

On the other hand, equation \eqref{eq:conductance_bound_weight_balanced} tells us that it is enough to bound the contribution of balanced sets $\tilde\om(\cS_{bal}$), while the proof itself uses the container lemma to bound the weight of a much larger family. As this step might be wasteful, it is reasonable to ask whether one can directly show a better bound for balanced sets to answer \Cref{quest:threshold}, bypassing or modifying the container lemma accordingly.
\begin{question}\label{quest:balanced}
    Let $G\in\cH(n,d,\codeg,\ex)$. Is it true that for $\alpha:=\lam(1-e^{-\beta})=\tilde\Om(1/d)$,
    $$\sum_{\substack{S\subseteq V(G)\\ |S\cap\cE|=|S\cap\cO|}}\lam^{|S|}e^{-\beta|S|}\leq (1+\lam)^{n/2}\exp\left(-n\frac{\alpha^2}{\textrm{poly}(d)}\right)?$$
    Specializing to the hard-core model, is it true that for $\lam=\tilde\Om(1/d)$,
    $$\sum_{\substack{I\in\cI(G)\\ |I \cap \cE|=|I \cap \cO|}}\lam^{|I|}\leq (1+\lam)^{n/2}\exp\left(-n\frac{\lam^2}{\textrm{poly}(d)}\right)?$$
\end{question}
We note that the squared term in the exponential is only written here in analogy of previous results and the actual result could be different, as long as it decays exponentially fast with $n$. The problem of counting balanced independent sets in the hypercube $Q^d$ has already been considered in the past, with Park \cite{park2022note} proving that the number of balanced independent sets is 
$$\text{bis}(Q^d)=2^{\left(1-\Theta\left(\frac{1}{\sqrt{d}}\right)\right)2^{d-1}},$$
consistent with \Cref{quest:balanced} for $\lam=1$.

From now on, we focus on the hard-core model for ease of presentation, but the interested reader can reformulate everything for the antiferromagnetic Ising model. Crucially, the local nature of the Glauber dynamics creates a bottleneck between majority-even and majority-odd sets. See also \cite[Section 14]{jenssen2023homomorphisms} for the same phenomenon on the torus in a more general framework. The question arises whether this is \emph{the only} bottleneck, i.e., the only obstacle for rapid mixing. It has been proposed in \cite{HePeRe20, JeKePe20} that a polynomial-time approximate sampler can be obtained by choosing a ground state uniformly at random and then running the Glauber dynamics starting from there. In a similar direction, in \cite[Theorem 13]{CheGaGoPeSteVi21} it was proven that restricting the Glauber dynamics to an appropriate portion of the state space is mixing polynomially fast for bounded degree graphs when $\lam$ is polynomially large in $\Delta$.

Here, we propose a Markov chain that takes advantage of the symmetry of the bipartite graph and injects a global move in the Glauber dynamics which flips majority-odd to majority-even sets and vice-versa. Hence, if that is indeed the only bottleneck, it should mix fast(er). We only state it for the hard-core model on the hypercube.

For a set $S \subseteq V(Q^d)$, let $S+e_1$ denote the vertex set obtained from $S$ by flipping the first coordinate of each $v \in S$. The \textit{Glauber dynamics with flips} is defined as follows. Given that we are at state $X_t:=I$ in time $t$, the state $X_{t+1}$ is determined as follows.
\begin{enumerate}
    \item Choose a vertex $v$ uniformly at random.
    \item Let $S':=S\cup\{v\}$ with probability $\frac{\lam}{1+\lam}$ and $S':=S$ with probability $\frac{1}{1+\lam}$.
    \item If $S'$ is an independent set, let $S'':=S'$. Else, let $S'':=S$.
    \item Let $X_{t+1}:=S''$ or $X_{t+1}:=S''+e_1$ each with probability $1/2$.
\end{enumerate}
In other words, the chain does a step of the Glauber dynamics and then with probability $1/2$ ``flips'' the independent set to the other side of the bipartition. This corresponds to considering the chain with transition matrix $\frac{1}{2}(I+I_\sigma)P$, where $P$ is the transition matrix of the Glauber dynamics and $I_\sigma$ is the permutation matrix, with columns indexed by the elements of the state space, corresponding to the operation $S\to S+e_1$. Note that the stationary distribution $\pi$ is still the Gibbs measure $\mu_{\lam,\beta}$: the automorphism $v\to v+e_1$ of $Q^d$ preserves the Gibbs measure of sets ($\pi(S)=\pi(S+e_1)$ for all sets $S$), and hence $\pi I_\sigma=\pi$ and $\pi\cdot\frac{1}{2}(I+I_\sigma)P=\pi$.

Specializing to the hypercube $Q^d$, where $n=2^d$, the result of \cite{CheGaGoPeSteVi21} only yields a quasipolynomial bound on the mixing time of the restricted Glauber dynamics for $\lam\geq n^{\Om(\log\log n)}$ , so we cannot disregard the possibility that another bottleneck is created. Hence, we pose the following question for the hypercube.
\begin{question}\label{ques:global_glauber}
    Does the Glauber dynamics with flips on the hypercube mix in polynomial time when $\lam$ is bounded?
\end{question}
The global move we imposed uses the symmetry of the graph in an essential way, exploiting an automorphism of the graph that maps the sides of the bipartition to one another to break the bottleneck. One may use another such automorphism (e.g., adding one to some other coordinate), or even choose one according to some distribution (e.g., choosing the coordinate to flip uniformly at random). 

On the other hand, it is unknown whether a polynomial-time approximate sampler or an FPRAS for the hypercube exist, and potentially the problem could even be hard on this particular instance. Therefore, we pose the following question.
\begin{question}\label{ques:hypercube_bis}
    Is there an FPRAS for the number of independent sets in the hypercube?
\end{question}

\section*{Acknowledgments}
The authors would like to thank Will Perkins for useful discussions. This research was supported in part by the Austrian Science Fund (FWF) [10.55776/F1002].
For open access purposes, the authors have applied a CC BY public copyright license to any author accepted manuscript version arising from this submission.

\printbibliography

\newpage
\appendix

\section{Equivalence to the Ising model}\label{sec:ising}

In this appendix, we present an elementary proof of the equivalence of the spin model studied in this paper to the classical version of the antiferromagnatic Ising model. More general results on the equivalence of $2$-spin systems can be found in \cite{SuSly12}.
Recall that given a graph $G=(V, E)$, the model we study samples a subset $S\subseteq V$ (which can be seen as a $\{0,1\}$-configuration in the obvious way) with probability proportional to $\lam^{|S|} e^{-\beta |E(S)|}$.
In this section, we will call this model the lattice gas model.
In the classical Ising model, a configuration is a spin configuration $\sigma \in \{-1,+1\}^{V}$ and its weight is given by
\[
\exp\left({J \sum_{uv \in E} \sigma(u) \sigma(v) + h \sum_{w \in V} \sigma(w)}\right),
\]
where $J \in \mathbb{R}$ is the interaction coefficient and $h \in \mathbb{R}$ is an external magnetic field.
The negative of the expression in the exponent is called the Hamiltonian. The Ising model is called \textit{ferromagnetic} if $J > 0$ and called \textit{antiferromagnetic} if $J < 0$.

\begin{proposition}
    On a $d$-regular graph $G$, the lattice gas model with fugacity $\lam > 0$ and inverse temperature $\beta$ is equivalent to the classical Ising model with $J=-\frac{\beta}{4}$ and $h=\frac{\log \lam}{2} - \frac{\beta d}{4}$.
\end{proposition}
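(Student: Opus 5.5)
We will exhibit the natural bijection $S \leftrightarrow \sigma$ given by $\sigma(v)=+1$ if $v\in S$ and $\sigma(v)=-1$ otherwise, equivalently $\mathbbm{1}_{v\in S}=\frac{1+\sigma(v)}{2}$. We will then show that the Ising weight of $\sigma$ equals a constant (independent of $\sigma$) times $\lam^{|S|}e^{-\beta|E(S)|}$. Such a constant cancels in the normalization, so the two Gibbs measures coincide under the bijection.

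The first step is to rewrite the two statistics $|S|$ and $|E(S)|$ in terms of $\sigma$. Directly from the bijection,
\[
|S|=\sum_{w\in V}\frac{1+\sigma(w)}{2}=\frac{n}{2}+\frac12\sum_{w}\sigma(w).
\]
For the edge count we expand the product of indicators:
\[
|E(S)|=\sum_{uv\in E}\frac{(1+\sigma(u))(1+\sigma(v))}{4}=\frac{|E|}{4}+\frac14\sum_{uv\in E}(\sigma(u)+\sigma(v))+\frac14\sum_{uv\in E}\sigma(u)\sigma(v).
\]

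The second step uses $d$-regularity, which is the only place the hypothesis enters. Each vertex $w$ lies in exactly $d$ edges, so $\sum_{uv\in E}(\sigma(u)+\sigma(v))=d\sum_{w}\sigma(w)$; we also have $|E|=nd/2$.

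Substituting these identities gives
\[
\log\bigl(\lam^{|S|}e^{-\beta|E(S)|}\bigr)=-\frac{\beta}{4}\sum_{uv\in E}\sigma(u)\sigma(v)+\Bigl(\frac{\log\lam}{2}-\frac{\beta d}{4}\Bigr)\sum_{w}\sigma(w)+\Bigl(\frac{n\log\lam}{2}-\frac{\beta nd}{8}\Bigr).
\]
The last bracket does not depend on $\sigma$. Reading off the remaining coefficients gives $J=-\beta/4$ and $h=\frac{\log\lam}{2}-\frac{\beta d}{4}$.

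We then conclude that the two partition functions differ by the factor $\lam^{n/2}e^{-\beta nd/8}$, and that the measures agree. As a consistency check, $\beta>0$ corresponds to $J<0$, so the antiferromagnetic regimes match. The proof is a routine computation. The only point requiring care is the linear edge term, which turns into a field term precisely because every vertex has the same degree $d$; on non-regular graphs one would instead get a vertex-dependent field $h_w$.
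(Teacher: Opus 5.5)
Your proof is correct and follows the same route as the paper: the same bijection $\mathbbm{1}_{v\in S}=\frac{1}{2}(1+\sigma(v))$, the same expansion of the edge-indicator product, and the same use of $d$-regularity to turn the linear edge term into a uniform field. The only cosmetic difference is that you work with the log-weight rather than the Hamiltonian $H=-\log(\text{weight})$, and your constant $\lam^{n/2}e^{-\beta nd/8}$ matches the paper's configuration-independent term $\bigl(\frac{\beta d}{8}-\frac{\log\lam}{2}\bigr)n$.
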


\begin{proof}
    Let $S \subseteq V$ be a configuration of the lattice gas model.
    The Hamiltonian is given by
    \[
    H(S)=\beta |E(S)| - (\log \lam)|S| = \beta \sum_{uv \in E} \mathbbm{1}_{u \in S} \mathbbm{1}_{v \in S} - (\log \lam) \sum_{w \in S} \mathbbm{1}_{w \in S}.
    \]
    Now set $\sigma(v)\coloneqq 2\mathbbm{1}_{v \in S} -1$. This defines a bijection to configurations of the classical Ising model, where $\sigma(v)=1$ if and only if $v \in S$. Substituting $\mathbbm{1}_{v \in S}= \frac{1}{2}(\sigma(v)+1)$, we obtain
    \[
    H(S)= \frac{\beta}{4} \sum_{uv \in E}\left( \sigma(u) \sigma(v) + \sigma(u)+\sigma(v) + 1\right) - \frac{\log \lam}{2} \sum_{w \in V} (\sigma(w)+1).
    \]
    Since $G$ is $d$-regular, we obtain that 
    \[
    H(S)= \frac{\beta}{4} \sum_{uv \in E} \sigma(u) \sigma(v) + \left( \frac{\beta d}{4} - \frac{\log \lam}{2} \right) \sum_{w \in V} \sigma(w) + \left(\frac{\beta d}{8} - \frac{\log \lam}{2} \right)n,
    \]
    where $n=|V|$. The last term is independent of the configuration, and thus does not affect the resulting Gibbs measure.
\end{proof}

\section{The Container Lemma}\label{sec:appendix_containers}
In this appendix, we prove \Cref{lem:middle}, which gives an upper bound on the weight $\tilde{\omega}(\cS_{big, \cD})$ of $\cS_{big, \cD}$ using graph containers. 
We assume that we work with a graph $G\in\cH(n,d,\codeg,\ex)$ with constant maximum codegree $\codeg \geq 1$ and with constant $0 \leq \ex<2$. The following definitions are crucial for the construction of containers.

\begin{definition}\label{def:phi_approx}
    Let $1\leq \varphi\leq d-1$ and let $A\subset \cD$. A \textit{$\varphi$-approximation} of $A$ is a set $F'\subseteq \overline{\cD}$ such that
    \begin{equation*}
        N(A)^{\varphi}\subseteq F' \subseteq N(A)\qquad \text{and} \qquad N(F')\supseteq [A],
    \end{equation*}
    where $N(A)^{\varphi}:=\{v\in N(A): d_{[A]}(v)> \varphi\}$.
\end{definition}

\begin{definition}\label{def:psi_approx}
Let $1 \le \psi \le d - 1$. A \textit{$\psi$-approximation} $(F,H)$ is a pair of vertex sets $F \subseteq \overline{\cD}, H \subseteq \cD$ such that for some set $A \subseteq \cD$, 
    \begin{equation*}
        F\subseteq N(A)  \ \ \text{and} \ \ H\supseteq [A],
    \end{equation*}
    \begin{equation*}
        d_F(u)\geq d(u)-\psi, \quad \forall u\in H,
    \end{equation*}
    \begin{equation*}
        d_{\cO\setminus H}(v)\geq d(v)-\psi, \quad \forall v\in \cE\setminus F.
    \end{equation*}
    In this case, we say that $(F, H)$ approximates $A$, and we denote this relation by $A\approx (F,H)$.
\end{definition}
What follows is an adaptation of Galvin \cite{galvin2019independent} to the case where the considered sets are not $2$-linked. Of course, this generalization is not valid in all ranges of the parameters, but it is for the large sets we consider. Recall that for $\cD \in \{\cO, \cE\}$ and $a, b \in \mathbb{N}$,
\begin{equation*}
    \cG_{\cD}(a, b) \coloneqq \{A \subseteq \cD : |[A]|=a, |N(A)|=b\}.
\end{equation*}
\vspace{-0.7cm}
\begin{lemma} \label{lem:container_family}
    Let $t:=b-a$. If $b\geq n/d$, then there exists a family $\cA$ of pairs $(F,H)$ with
    \begin{equation*}
    |\cA| \le \exp\left[O\left(\frac{b\log^2d}{d^2} + \frac{t \log^2 d}{d} \right)\right],
    \end{equation*}
such that every $A\in \cG(a,b)$ has a $d/2$-approximation $(F,H)$ in $\cA$.
\end{lemma}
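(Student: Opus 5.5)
We follow Galvin's two-step container construction \cite{galvin2011threshold, galvin2019independent}. First we build a small family of $\varphi$-approximations with $\varphi = d/2$, and then we refine each of them into a $\psi$-approximation $(F,H)$ with $\psi = d/2$. In Galvin's argument $2$-linkedness is used only to control the number of choices for the "skeleton" sets through \Cref{l:counting2linked}; this is what produces the usual $\exp(O(b\log^2 d/d^2))$-type term with no $\log n$ loss. Here we will instead pay for specifying unstructured subsets of $\overline{\cD}$ by binomial coefficients. The hypothesis $b \ge n/d$ is what makes this affordable: a factor $\binom{n}{m}$ with $m = O(b\log d/d^2)$ costs only $\exp(O(m\log(n/m))) = \exp(O(b \log^2 d/d^2))$, because $\log(n/m) = O(\log d)$ once $n/b \le d$.

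\textbf{Step 1 ($\varphi$-approximation).} Fix $A \in \cG(a,b)$. We choose a random subset $T \subseteq N(A)$ by including each vertex independently with probability $p = \Theta(\log d/d)$. Let $T' \subseteq [A]$ be the set of vertices of $[A]$ having no neighbour in $T$; in expectation it has size $a e^{-\Omega(pd)}$, which we tune to be at most $b/d^2$. Let $\Omega$ be the set of vertices of $N(A)^{\varphi}$ not contained in $N(N_{\cD}(T))$-type coverage; its expected size is $O(b e^{-\Omega(p\varphi)})$. Using the standard Markov/expectation argument, we fix an outcome in which $|T| = O(b\log d/d)$, $|T'| = O(b/d^2)$ and $|\Omega| = O(b/d^2)$. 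We then set $F' = N(T) \cup N(T')\cup \Omega$-style (Galvin's recipe), intersected appropriately. Since $N(A)$ is determined once $F'$ is, we count $F'$ by the number of ways to choose $T, T', \Omega$ from $V$:
\[
\binom{n}{O(b\log d/d)}\binom{n}{O(b/d^2)}^2.
\]
The first factor is too large as written. To fix this we use codegree $\le \codeg$ and the degree bound $d$ to first encode $T$ via a set $T_0$ with $|T_0| = O(b/d)$ covering it in $G^2$-neighbourhoods, and then choose $T$ inside $N^2(T_0)$. This is the usual trick of replacing $2$-linked counting by covering with balls; it gives a total of $\exp(O(b\log^2 d/d^2 + b\log d \cdot \log(ed^2)/d\cdot \tfrac{1}{d}))$. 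Getting this bookkeeping right, so that every term is $O(b\log^2 d/d^2)$, is the step I expect to be the main obstacle. If the covering trick fails, the fallback is to use $t = b-a$ together with the expansion from (2) of \Cref{def:H}. That does not assume $2$-linkedness, and it bounds the number of vertices of $N(A)$ with small $[A]$-degree by $O(t)$, which lets us charge the expensive part to the $t\log^2 d/d$ term.

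\textbf{Step 2 ($\psi$-refinement).} Given $F'$, we follow Galvin \cite[Lemma 5.5]{galvin2019independent}, which uses no connectivity at all. Let $H' = \{u\in\cD : d_{F'}(u) \ge d-\psi\}$, which contains $[A]$. We then add to $F'$ the vertices of $N(A)\setminus F'$; each such vertex has at most $\varphi$ neighbours in $[A]$. The key count is $|N(A)\setminus F'|$ weighted by degrees: edge counting between $[A]$ and $N(A)$ gives $e(N(A)\setminus N(A)^\varphi,[A]) \ge d a - \ldots$, hence $|N(A)\setminus F'| = O(t)$ up to the $d/(d-\varphi)=2$ factor. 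We specify these $O(t)$ vertices, and symmetrically the $O(t)$ vertices of $H'\setminus[A]$ that violate the conditions, through random subsets of density $\log d/d$ inside $N(F')$ or $N(H')$. These are sets of size $O(td^2)$ known from $F'$, so the cost is $\exp(O(t\log^2 d/d))$.

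Finally, multiplying the counts from both steps gives $|\cA| \le \exp[O(b\log^2 d/d^2 + t\log^2 d/d)]$, and each $A\in\cG(a,b)$ has a $d/2$-approximation in $\cA$ by construction. We then verify the three conditions of \Cref{def:psi_approx} with $\psi=d/2$ directly from the definitions of $F$ and $H$.
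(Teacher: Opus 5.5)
Your overall architecture matches the paper's, and so does deferring Step 2 to Galvin: a $\varphi$-approximation, then Galvin's $\psi$-refinement with $\varphi=\psi=d/2$, paying for unstructured sets with binomial coefficients over all of $V$, and using $b\ge n/d$ so that $\log(n/m)=O(\log d)$. The gap is in Step 1.

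With density $p=\Theta(\log d/d)$ your set has $|T|=\Theta(b\log d/d)$. Then $\binom{n}{|T|}=\exp(\Theta(b\log^2 d/d))$, which exceeds the target $b\log^2 d/d^2+t\log^2 d/d$ by a factor $\min\{d,b/t\}$ in the exponent. Under property (2), $b/t$ can be as large as $\Theta(d^{\ex})$. The covering trick does not repair this, and your claimed total has an arithmetic slip. Naming $T_0$ with $|T_0|=\Theta(b/d)$ already costs $\exp(\Theta(b\log d/d))$. Choosing $\Theta(b\log d/d)$ vertices inside $N^2(T_0)$, a set of size $O(bd)$, costs $\exp(\Theta(b\log^2 d/d))$ again; there is no extra factor $1/d$. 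The fallback fails for the same reason: the size of $T$ scales with $b$, not with $t$. This is not a bookkeeping accident. Any scheme that covers $[A]$ in one step by $N(T)$ needs $|T|\ge a/d$, which is too expensive whenever $t\ll a$.

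The missing idea is that you only need to capture $N(A)^{\varphi}$, not $[A]$, and you should do it through \emph{two}-step neighbourhoods inside $[A]$.
\begin{itemize}
\item Each $v\in N(A)^{\varphi}$ has more than $d/2$ neighbours in $[A]$, and all their neighbours lie in $N(A)$.
\item By the codegree bound, $v$ therefore has at least $d(d-1)/(2\codeg)$ vertices of $N(A)$ at distance two through $[A]$.
\item So a $p$-random $T_0\subseteq N(A)$ with $p=2C\log d/d^2$ already gives $\mathbb{E}|T_0'|\le b/d^2$ for $T_0'=N(A)^{\varphi}\setminus N(N_{[A]}(T_0))$, while $|T_0|=O(b\log d/d^2)$.
\item Let $L=N(N_{[A]}(T_0))\cup T_0'$. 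A Lov\'asz-type cover $T_1$ of $[A]\setminus N(L)$ has size $O(t\log d/d)$. This holds because each vertex of $N(A)\setminus L$ has at most $d/2$ neighbours in $[A]$, and $|N(A)\setminus N(A)^{\varphi}|\le 2t$.
\item Counting $T_0\cup T_0'\cup T_1$ by $\binom{n}{O(b\log d/d^2+t\log d/d)}$ now gives the required bound.
\end{itemize}

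There is a second missing ingredient. $F'=L\cup T_1$ depends on $N_{[A]}(T_0)$, which is not determined by $T_0$ because $[A]$ is unknown. The paper records the edge set $\Omega(T_0)=E(T_0,\cD\setminus[A])$, of expected size $tdp=O(t\log d/d)$, among the at most $d|T_0|$ edges at $T_0$. This costs $\exp(O(t\log^2 d/d))$ using $b/t=O(d^2)$. Your recipe for $F'$ has no such device. It also mixes sides: $T\subseteq\overline{\cD}$, so $N(T)\subseteq\cD$, whereas $F'\subseteq\overline{\cD}$. Finally, $F'$ does not determine $N(A)$; it is only sandwiched between $N(A)^{\varphi}$ and $N(A)$.
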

The reader familiar with the method will notice that there is a factor of $|\cD|$ missing in front of the upper bound on $|\cA|$. This is not an improvement but is instead dominated by the exponential term. The container family $\cA$ is obtained in two steps. First a rough $\varphi$-approximation is constructed. Then this is refined into a $\psi$-approximation. More precisely, \Cref{lem:container_family} is implied by the combination of the following two lemmas.
\begin{lemma}\label{lem:phi_family}
    There exists a family $\cV=\cV(\varphi)$ such that every set $A\in\cG_{\cD}(a,b)$ has a $\varphi$-approximation with $\varphi=d/2$ in $\cV$, and
    \begin{equation*}
        |\cV|=\exp\left[O\left(\left(\frac{b\log d}{d^2} + \frac{t \log d}{d} \right)\log\left(\frac{nd^2}{b \log d}\right)+\frac{t\log^2 d}{d}\right)\right].
    \end{equation*}
\end{lemma}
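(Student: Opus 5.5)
We would construct $\cV$ in the standard Galvin/Sapozhenko way, adapted so that $2$-linkedness of $A$ is never used. Fix $A\in\cG_{\cD}(a,b)$, write $G' := N(A)$, and set $p := \frac{C\log d}{d}$ for a suitable constant $C$.

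\textbf{Step 1 (random sparse set).} Choose a random subset $T_0\subseteq N(A)$ by including each vertex independently with probability $p$. Then $\Ex{|T_0|}=pb$. For each vertex $u\in[A]$, all $d$ of its neighbours lie in $N(A)$. Hence the probability that $u$ has no neighbour in $T_0$ is $(1-p)^d\le d^{-C}$, and the expected number of such "uncovered" vertices is at most $a d^{-C}$.

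Next consider $N(A)^{\varphi}$ with $\varphi=d/2$. Each $v\in N(A)^{\varphi}$ has more than $d/2$ neighbours in $[A]$. The expected number of $v\in N(A)^\varphi$ with $|N(v)\cap N(T_0)|$ small, meaning fewer than $d/2$ common hits, is also at most $b\, d^{-C'}$ by a Chernoff bound. To see this, note that each $w\in N(v)\cap[A]$ is covered unless it misses $T_0$, and bounded codegree $\codeg$ makes these events nearly independent.

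Let $T_1$ be the set of uncovered vertices of $[A]$ together with the deficient vertices of $N(A)^\varphi$. With positive probability, $|T_0|\le 2pb$ and $|T_1|\le 2(a+b)d^{-C}$.

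\textbf{Step 2 (defining the approximation).} Define
\[F'' := N(T_0)\cup N(T_1\cap\cD)\quad\text{intersected appropriately,}\]
and then take $F' := \{v\in\overline{\cD}: d_{N(F'')}(v)>\varphi\}\cup (T_1\cap\overline{\cD})$, following Galvin's definition. Here $N(F'')\supseteq[A]$ holds because every vertex of $[A]$ either sees $T_0$ or was added to $T_1$. One checks that $N(A)^\varphi\subseteq F'\subseteq N(A)$, so $F'$ is determined by the pair $(T_0,T_1)$.

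\textbf{Step 3 (counting).} In the $2$-linked setting, one encodes $T_0$ as a $2$-linked-ish set via \Cref{l:counting2linked}. Here $A$ is not $2$-linked, so we simply choose $T_0$ as an arbitrary subset of $\overline{\cD}$ of size at most $2pb$. This costs
\[\binom{n/2}{\le 2pb}\le\exp\left(O\left(\frac{b\log d}{d}\log\frac{nd}{b\log d}\right)\right)\]
by \eqref{eqn:binom_estimate}.

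This is too large by a factor of $d$ in the exponent, so Step 1 must be refined. The plan is to first reduce $A$ using the expansion gap $t=b-a$, as in Galvin: the vertices of $N(A)$ with at least $d-d/\log d$ neighbours in $[A]$ carry most of the edges. Counting edges gives
\[\sum_{v\in N(A)}\bigl(d-d_{[A]}(v)\bigr)=db-da=dt,\]
so at most $O(t\log d)$ vertices of $N(A)$ have low $[A]$-degree. Sample $T_0$ only among the high-degree vertices, where sparser sampling suffices. Choosing $p\approx \frac{\log d}{d^2}$ and covering by second-neighbourhoods then yields $|T_0|=O\left(\frac{b\log d}{d^2}\right)$, while the low-degree exceptional set contributes $O\left(\frac{t\log d}{d}\right)$ vertices. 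Counting these as arbitrary subsets of a ground set of size $n$ gives the factor
\[\left(\frac{b\log d}{d^2}+\frac{t\log d}{d}\right)\log\left(\frac{nd^2}{b\log d}\right).\]
Encoding the repair set $T_1$, whose size is $O(t\log d/d)$ and which lies within distance $2$ of already-specified vertices, gives the additional $\frac{t\log^2 d}{d}$ term.

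\textbf{Main obstacle.} The main obstacle is this last refinement. We need to verify that the sparse sample $T_0$ (of density $\log d/d^2$ among high-degree vertices) still covers $[A]$ up to $O(t\log d/d)$ exceptions. We also need to handle the loss of $2$-linkedness, by paying $\log\frac{nd^2}{b\log d}$ per vertex instead of $\log d$. This is precisely why the statement carries that logarithmic factor, and it is affordable because $b\ge n/d$ in the intended application.
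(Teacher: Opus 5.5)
You have not yet proved the lemma. Steps 1--3 do not work: you note yourself that $p=C\log d/d$ loses a factor of $d$. So everything rests on the ``refinement'', which you only sketch and leave unverified, and that refinement lacks the key ingredient.

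\textbf{The missing ingredient.} Once $T_0$ is sampled at density $p=\Theta(\log d/d^2)$, the approximation must be built from $N(N_{[A]}(T_0))$, not from $N(T_0)$.
\begin{itemize}
\item The set $N(T_0)$ contains vertices of $\cD\setminus[A]$. Every $x\in\overline{\cD}\setminus N(A)$ has all its neighbours in $\cD\setminus[A]$. So a rule like your Step 2 (``$v$ has many neighbours in $N(F'')$'') can put vertices outside $N(A)$ into $F'$, and your claim $F'\subseteq N(A)$ fails.
\item Restricting the sample to vertices of $[A]$-degree at least $d-d/\log d$ does not remove these escaping edges.
\item The paper handles this by also recording $\Omega(T_0)=E(T_0,\cD\setminus[A])$. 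Exactly $td$ edges join $N(A)$ to $\cD\setminus[A]$, so a $p$-random $S\subseteq N(A)$ has $\E|\Omega(S)|=tdp=O(t\log d/d)$. Then $N_{[A]}(T_0)$ is $N(T_0)$ minus the $\cD$-endpoints of $\Omega(T_0)$, so it is determined by the recorded data.
\item Specifying $\Omega(T_0)$ among the $O(b\log d/d)$ edges at $T_0$ costs $\exp\left(O\left(\frac{t\log d}{d}\log\frac{b}{t}\right)\right)=\exp\left(O\left(\frac{t\log^2 d}{d}\right)\right)$, using $b/t=O(d^{\ex})=O(d^2)$.
\end{itemize}
This is where the $\frac{t\log^2 d}{d}$ term comes from. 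It does not come from $T_1$ lying ``within distance $2$ of already-specified vertices'', which nothing in your construction guarantees.

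\textbf{The remaining steps.} These are only gestured at in your proposal; in the paper they go as follows.
\begin{itemize}
\item The paper takes $p=2C\log d/d^2$ on all of $N(A)$; no degree restriction is needed.
\item The codegree bound gives each $w\in N(A)^{\varphi}$ roughly $d^2/(2\codeg)$ vertices of $N(A)$ at distance two through $[A]$. Hence $\E|N(A)^\varphi\setminus N(N_{[A]}(S))|\le b/d^2$, and these leftovers $T_0'$ are recorded explicitly.
\item With $L=N(N_{[A]}(T_0))\cup T_0'$, a minimum cover $T_1\subseteq N(A)\setminus L$ of $[A]\setminus N(L)$ is taken. Vertices of $[A]\setminus N(L)$ have all their neighbours in $N(A)\setminus N(A)^{\varphi}$ with $\varphi=d/2$. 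That set has at most $2t$ vertices, since each sends at least $d/2$ edges to $\cD\setminus[A]$. The greedy cover bound then gives $|T_1|\le 6t\log d/d$.
\end{itemize}
With your threshold $d-d/\log d$, the exceptional set can have $t\log d$ vertices. The same greedy argument then only gives a cover of size $O(t\log^2 d/d)$, and counted as an arbitrary subset of $V$ this can exceed the stated bound by a factor of $\log d$. So your assertion that the exceptional set ``contributes $O(t\log d/d)$ vertices'' is unsupported as stated.

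Only after these steps does one count $T=T_0\cup T_0'\cup T_1$ as an arbitrary set of size $O\left(\frac{b\log d}{d^2}+\frac{t\log d}{d}\right)$. That produces the $\log\left(\frac{nd^2}{b\log d}\right)$ factor, which you correctly anticipated.
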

Note that the bound on $|\cV|$ gives an exponential term in $n$ unless $b$ is very large, which is the case for us. 

\begin{lemma}\label{lem:psi_family}
    For any $F'\in\cV(\phi)$, there exists a family $\cW=\cW(F')$ such that every $A\in\cG(a,b)$ whose $\varphi$-approximation is $F'$ has a $\psi$-approximation in $\cW$ where $\varphi=\psi=d/2$, and
    \begin{equation*}
        |\cW|=\exp\left(O\left(\frac{t\log d}{d}\right)\right).
    \end{equation*}
\end{lemma}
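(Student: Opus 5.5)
This is the standard Sapozhenko--Galvin refinement step: starting from $F'$, we build the pair $(F,H)$ deterministically from $F'$ together with a small amount of extra information about $A$. The vertices we must pay for are those in the ``wrong'' region, and there are only $O(t)$ of them, each costing $O(\log d / d)$ bits.

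Fix $F'\in\cV(\varphi)$ and let $A\in\cG(a,b)$ have $F'$ as its $\varphi$-approximation, so $N(A)^{d/2}\subseteq F'\subseteq N(A)$ and $[A]\subseteq N(F')$. We first take the natural candidates
\[
H' \coloneqq \{u\in\cD : d_{F'}(u)\ge d-\psi\}, \qquad F'' \coloneqq F'\cup\{v\in\overline{\cD} : d_{H'}(v)> \psi\}.
\]
These already satisfy the degree conditions of \Cref{def:psi_approx}, except at two kinds of bad vertices:
\begin{itemize}
\item[(i)] vertices $u\in[A]$ with $d_{F'}(u)<d-\psi$;
\item[(ii)] vertices $v\in\overline{\cD}\setminus N(A)$ with many neighbours in $H'$.
\end{itemize}

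The key counting step bounds the bad vertices via edge counts. Every $u\in[A]$ has all $d$ of its edges going into $N(A)$. Edges from $[A]$ into $N(A)\setminus F'$ land on vertices of $[A]$-degree at most $\varphi=d/2$. Double counting the edges between $[A]$ and $N(A)$ gives
\[
d\,|[A]| \;=\; e([A],N(A)) \;\le\; d\,|N(A)^{\varphi}| + \tfrac d2\,|N(A)\setminus N(A)^{\varphi}|.
\]
Hence $|N(A)\setminus N(A)^{\varphi}|\le 2(b-a)=2t$. So only $O(t)$ vertices of $N(A)$ lie outside $F'$, and they are adjacent to at most $O(td)$ edges. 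Consequently the number of bad vertices of type (i) is $O(t)$, by a second degree count using $\psi=d/2$. A symmetric count in $\cD$ handles type (ii): $N(F'')\setminus[A]$ has size $O(t)$, since such vertices would otherwise force many non-neighbours of $A$ inside $N(F')$.

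We then correct $(H',F'')$ by specifying these $O(t)$ exceptional vertices. To keep the cost at $O(t\log d/d)$ rather than $O(t\log n)$, we would not list them directly. Instead we use Galvin's trick: pick a random subset of $N(F')$ (resp.\ of the exceptional region) with density $\Theta(\log d/d)$, whose neighbourhood covers the exceptional sets. We expect this to be the main obstacle.

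Concretely, the set $T\subseteq N(A)\setminus F'$ to be recorded sits inside the $2$-neighbourhood of $F'$, which has size $O(d^2|F'|)$, so naive listing costs $\binom{d^2 b}{O(t)}$. We avoid this as follows. A vertex $u\in[A]$ with $d_{F'}(u)<d/2$ has more than $d/2$ neighbours in $N(A)\setminus F'$. So a random subset $T'$ of $N(A)\setminus F'$ of density $\log d/d$ covers all such $u$ except a $1/d$ fraction, and we list the leftovers explicitly. $T'$ has size $O(t\log d/d)$ and is chosen from the set $N(N(F'))$, which has polynomial size in $d$ times $b$. Writing $|T'|\log(\text{ambient}/|T'|)$, the naive bound gives $O(t\log d\cdot\log(bd/t)/d)$. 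To remove the extra $\log(bd/t)$ factor we must exploit $2$-linkage locally: each element of $T'$ is within distance $2$ of the previously specified structure, so it costs only $O(\log d)$. This yields $|\cW|\le \exp(O(t\log d/d))$.

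Finally we set $H\coloneqq H'\cup N(T')^{\text{cov}}\cup L$ and $F\coloneqq$ the vertices of $\overline{\cD}$ with $d_H>\psi$, intersected with $N(A)$ as recorded. We then verify the three conditions of \Cref{def:psi_approx} using $\codeg=O(1)$ to control overlaps. Throughout, the determinism of each construction given the recorded data ensures that the map from this data to $(F,H)$ defines the family $\cW(F')$.
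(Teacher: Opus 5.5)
Your preliminary counts are right ($|N(A)\setminus F'|\le 2t$, and fewer than $2t$ deficient vertices in $[A]$). The argument fails, however, at exactly the step you flag as the main obstacle.

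\textbf{The linkage step.} The appeal to ``$2$-linkage locally'' is not available. This appendix is precisely the adaptation of Galvin's argument to sets $A$ that are \emph{not} $2$-linked. In any case, knowing that each vertex of $T'$ lies within distance $2$ of the already-recorded set $F'$ does not make it cost $O(\log d)$ bits, since locating a vertex near a set of size $\Theta(b)$ costs $\Theta(\log b)$. The $(ed^2)^{\ell}$-type count applies only when the recorded set is itself connected in a bounded power of $G$, and a sparse random subset of $N(A)\setminus F'$ is not. So your random cover records $\Theta(t\log d/d)$ vertices at $\Theta(\log d)$ bits each (using $b/t=O(d^{\ex})$). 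That gives $\exp(O(t\log^2 d/d))$, a $\log d$ factor short of the statement.

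\textbf{The output is not a $\psi$-approximation.} Putting a covered deficient vertex $u\in[A]$ into $H$ does not raise $d_F(u)$. Its missing neighbours lie in $N(A)\setminus F'$, have $[A]$-degree at most $d/2$, and need not satisfy $d_H>\psi$. Which vertices of $N(T')$ belong to $[A]$ is not determined by $T'$, since vertices of $N(A)\setminus F'$ have at least $d/2$ neighbours outside $[A]$. Finally, ``intersected with $N(A)$ as recorded'' presupposes $N(A)$, which you never record. (Also, $N(F'')\setminus[A]$ need not be $O(t)$; the quantity that is $O(t)$ is $H'\setminus[A]$.)

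\textbf{The missing idea.} The Sapozhenko--Galvin construction that the paper invokes records the deficient vertices themselves and absorbs their whole neighbourhoods.

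First, while some $u\in[A]$ has $d_{F'}(u)<d-\psi$, replace $F'$ by $F'\cup N(u)$. Since $N(u)\subseteq N(A)$, this keeps $F'\subseteq N(A)$ and uses up more than $\psi=d/2$ of the at most $2t$ vertices of $N(A)\setminus F'$. Hence the result $F''=F'\cup N(U)$ is determined by a set $U\subseteq N(F')$ with $|U|<4t/d$.

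Next, set $H''=\{u\in\cD: d_{F''}(u)\ge d-\psi\}\supseteq[A]$. Counting the $dt$ edges between $N(A)$ and $\cD\setminus[A]$ gives $|H''\setminus[A]|\le 2t$. So deleting $N(w)$ (which misses $[A]$) while some $w\in\overline{\cD}\setminus N(A)$ has $d_{H''}(w)>\psi$ takes fewer than $4t/d$ steps, recorded by a set $W\subseteq N(H'')$.

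Finally, let $H$ be the result and set $F=F''\cup\{v: d_H(v)>\psi\}$. This lies in $N(A)$ because no vertex outside $N(A)$ now has $H$-degree above $\psi$, and the conditions of \Cref{def:psi_approx} are then immediate.

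Since $|N(F')|,|N(H'')|=O(db)$, the number of choices of $(U,W)$ is $\exp\bigl(O(\tfrac{t}{d}\log\tfrac{d^2b}{t})\bigr)=\exp(O(t\log d/d))$ by $b/t=O(d^{\ex})$. No linkage is used.
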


\Cref{lem:container_family} follows immediately from Lemmas \ref{lem:phi_family} and \ref{lem:psi_family} using $|\cA|\leq|\cV|\cdot|\cW|$ and $b\geq n/d$. The proof of \Cref{lem:psi_family} is identical to that in \cite{galvin2019independent}, modulo changing the notation accordingly and setting $\varphi=\psi=d/2$.
We will only present the proof of \Cref{lem:phi_family}, closely following \cite[Section 5.3]{galvin2019independent}. Note that the notation in this section does not interfere with the notation in the main body of the paper.

\begin{proof}[Proof of \Cref{lem:phi_family}]
    Fix some $A\in\cG_{\cD}(a,b)$ and let $p \coloneqq \frac{2C \log d}{d^2}$, where $C$ is a constant to be determined later. Choose a $p$-random subset $S$ of $N(A)$. Then $|S| \sim \text{Bin}(b, p)$, so that
    \begin{equation}\label{eqn:S_exp_size}
    \Ex{|S|}=bp = \frac{2C \log d}{d^2} b.
    \end{equation}
    Let $\Omega(S) \coloneqq E(S, \cD \setminus [A])$. Since $|E(N(A), \cD \setminus [A])|=td$, we conclude that
    \begin{equation}\label{eqn:S_escape}
        \Ex{|\Omega(S)|} = t dp = \frac{2C t \log d}{d}.
    \end{equation}
    Let $S' \coloneqq N(A)^{\varphi} \setminus N(N_{[A]}(S))$. Following the calculation of (28) in \cite{galvin2019independent}, one may conclude that
    \begin{equation}\label{eqn:S'_size}
        \Ex{|S'|} \leq  \frac{b}{d^2},
    \end{equation}
    when $C$ is chosen appropriately, solely depending on the maximum codegree of the graph. Using Markov's inequality and equations \eqref{eqn:S_exp_size}, \eqref{eqn:S_escape}, and \eqref{eqn:S'_size}, there exists at least one $T_0 \subseteq N(A)$ satisfying
    \begin{equation*}
        |T_0| \leq \frac{4C b \log d}{d^2} \quad \text{and} \quad
        |\Omega(T_0)| \leq \frac{8Ct\log d}{d} \quad \text{and} \quad
        |T_0'| \leq \frac{4b}{d^2}.
    \end{equation*}
    Setting $L \coloneqq N(N_{[A]}(T_0)) \cup T_0'$, let $T_1 \subseteq N(A) \setminus L$ be a cover of minimum size of $[A] \setminus N(L)$ in the graph induced by $(N(A) \setminus L) \cup ([A] \setminus N(L))$. Then $F':=L\cup T_1$ is a $\varphi$-approximation of $A$. 
    
    Following the argument after the proof of Claim 5.7 in \cite{galvin2019independent}, we have 
    \begin{align*}
        |T_1| \leq \frac{3t \log d}{d-\varphi} = \frac{6t \log d}{d}.    
    \end{align*}
   Set $T \coloneqq T_0 \cup T_0' \cup T_1$  and denote
    \begin{align*}
        T_{bound} \coloneqq \frac{4C b \log d}{d^2} + \frac{4b}{d^2} + \frac{6t \log d}{d}.
    \end{align*}
    Since $|T| \leq T_{bound}$, the number of choices for $T$ is at most
    \begin{align} \label{eq:bound_sizeT}
        \binom{n}{|T|} \leq\binom{n}{T_{bound}} \leq \exp\left(O\left(\frac{b\log d}{d^2} + \frac{t \log d}{d} \right)\log\left(\frac{nd^2}{b \log d}\right)\right),
    \end{align}
    where we have used \eqref{eqn:binom_estimate}, the fact that $k\log(en/k)$ is increasing for $k\leq n$, and that if $k\geq k'$ then $k\log(en/k)\leq k\log(en/k')$. Note that here lies the main difference to the proof from \cite{galvin2019independent}, which uses a finer bound on the number of choices for $T$ stemming from the fact that the corresponding set in \cite{galvin2019independent} is $8$-linked. Nonetheless, the crude bound in \eqref{eq:bound_sizeT} will suffice for our purposes.

    Once $T$ is chosen, there are at most $2^{|T|}$ choices for each $T_0, T_1 \subseteq T$, while $T_0'$ is determined by $T_0$. Given that $|T_0|\leq \frac{4C b \log d}{d^2}$ and that $b/t=O(d^{\ex})=O(d^2)$, there are at most 
    $$\binom{\frac{4C b \log d}{d}}{\leq \frac{6Ct \log d}{d}}\overset{\eqref{eqn:binom_estimate}}{\leq} \exp\left(O\left(\frac{t\log^2 d}{d}\right)\right)$$
    choices for $\Omega(T_0)$. Altogether, the number of choices for $(T_0, T_0', T_1, \Omega(T_0))$ (and hence $F'$, which is completely determined by these sets) is at most
    $$2^{T_{bound}}\exp\left[O\left(\left(\frac{b\log d}{d^2} + \frac{t \log d}{d} \right)\log\left(\frac{nd^2}{b \log d}\right)+\frac{t\log^2 d}{d}\right)\right].$$
    As $2^{T_{bound}}$ is clearly dominated by the first term of the exponential, the proof of \Cref{lem:phi_family} is finished.
\end{proof}
Now the proof of \Cref{lem:container_main} is identical to the proof of \cite[Lemma 4.2]{GeKaSaWdo25}, namely by combining \cite[Lemma 4.5]{GeKaSaWdo25} with \Cref{lem:container_family} and noting that $b\geq a\geq f(\lam)\frac{n}{2} \geq n/d$.

\section{Proof of \Cref{lem:Kotecky}}\label{sec:appendix_tails}
In this appendix, we sketch the proof of \Cref{lem:Kotecky}, describing how many terms are needed in the computation of the cluster expansion to get within $\epsilon/16$ of the desired partition function. Define 
\begin{align*}
    \tilde \alpha \coloneqq \frac{1+\lambda}{1+\lambda e^{-\beta}}
\end{align*}
and note that $1 < \tilde \alpha \le 1+\lambda \le 1+\lam_0$ and $\log \tilde \alpha \ge \frac{\alpha}{1+\lambda}$. In \cite[Proof of Lemma 3.2, see Section 8]{GeKaSaWdo25}, what is actually shown is that
\begin{equation*}
    |L_{\cD,\geq k}|\leq nd^{-(\ex+1)}\exp{(-\tilde g(k))},
\end{equation*}
where the function $\tilde g$ is given by
\begin{align*}
\tilde{g}(k) &\coloneqq     \left\{\begin{array} {c@{\quad \textup{if} \quad}l}
       (d k -\codeg k^2)\log\ta - (\ex+7) k \log d & k \leq \frac{d}{\log \log d}, \\[+1ex]
      \frac{\sqrt{d} k}{2} \log\ta & \frac{d}{\log \log d} < k \le d^3 \log n, \\[+1ex]
      \frac{k}{d^{\ex+1}} & k > d^3 \log n.
    \end{array}\right.
\end{align*}
We aim to find $k_0$ such that $|L_{\cD,\geq k_0}|\leq \epsilon/16$, that is,
\begin{equation}\label{eqn:k0}
    \tilde g(k_0)\geq \log\left(\frac{16n}{\epsilon d^{\ex+1}}\right).
\end{equation}
Note that in the first case, $\tilde{g}$ does not depend on $n$ while the right-hand side of \eqref{eqn:k0} grows with $n$, such that \eqref{eqn:k0} is never fulfilled. In the second case, we note that $\tilde{g}$ is increasing in $k$. Thus, if \eqref{eqn:k0} is fulfilled for some $k$ in this regime, then it is also fulfilled for $k_0=\lfloor d^3 \log n \rfloor$.
In the third case, \eqref{eqn:k0} rearranges to
\begin{equation*}
    k_0\geq d^{\ex+1}\log\left(\frac{16n}{\epsilon d^{\ex+1}}\right).
\end{equation*}
Thus, in this regime we may take $k_0=\max\{\lfloor d^3 \log n +1 \rfloor, \lceil d^{\ex+1}\log\left(\frac{16n}{\epsilon d^{\ex+1}}\right) \rceil \}$. This completes the proof of \Cref{lem:Kotecky}.
\end{document}